\newtheorem{theorem}{Theorem}[section]
\newtheorem{proposition}[theorem]{Proposition}
\newtheorem{lemma}[theorem]{Lemma}
\newtheorem{corollary}[theorem]{Corollary}
\theoremstyle{remark}
\newtheorem{remark}[theorem]{Remark}
\newtheorem{lemmaa}{Lemma}
\newcommand{\R}{\mathbb{R}}
\newcommand{\N}{\mathbb{N}}
\newcommand{\Z}{\mathbb{Z}}
\newcommand{\Q}{\mathbb{Q}}
\newcommand{\C}{\mathbb{C}}
\newcommand{\D}{\mathbb{D}}
\newcommand{\Bc}{\mathcal{B}}
\newcommand{\Dc}{\mathcal{D}}
\newcommand{\Fc}{\mathcal{F}}
\newcommand{\Hc}{\mathcal{H}}
\newcommand{\Mc}{\mathcal{M}}
\newcommand{\Sc}{\mathcal{S}}
\newcommand{\Tc}{\mathcal{T}}
\newcommand{\Xc}{\mathcal{X}}
\newcommand{\Zc}{\mathcal{Z}}
\newcommand{\Pc}{\mathcal{P}}
\newcommand{\expect}{\mathbb{E}}
\newcommand{\Expect}[1]{\mathbb{E} \left[ #1 \right] }
\newcommand{\EXPECT}[2]{\mathbb{E}_{#1} \left[ #2 \right] }
\newcommand{\prob}{\mathbb{P}}
\newcommand{\Prob}[1]{\mathbb{P} \left( #1 \right) }
\newcommand{\PROB}[2]{\mathbb{P}_{#1} \left( #2 \right) }
\renewcommand{\P}{\mathbb{P}}
\newcommand{\abs}[1]{\left\vert #1 \right\vert}
\newcommand{\scalar}[1]{\left\langle #1 \right\rangle }
\newcommand{\floor}[1]{\left\lfloor #1 \right\rfloor}
\newcommand{\indic}[1]{ \mathbf{1}_{ \left\{ #1 \right\} } }
\newcommand{\eps}{\varepsilon}
\DeclareMathOperator{\CR}{CR}
\newcommand*\bigcdot{\mathpalette\bigcdot@{.5}}
\newcommand*\bigcdot@[2]{\mathbin{\vcenter{\hbox{\scalebox{#2}{$\m@th#1\bullet$}}}}}
\newcommand{\subscript}[2]{$#1 _ #2$}
\title{Characterisation of planar Brownian multiplicative chaos}
\author{Antoine Jego}
\date {}
\numberwithin{equation}{section}
\begin{document}

\maketitle

\begin{abstract}
We characterise the multiplicative chaos measure $\Mc$ associated to planar Brownian motion introduced in \cite{bass1994,AidekonHuShi2018,jegoGMC} by showing that it is the only random Borel measure satisfying a list of natural properties. These properties only serve to fix the average value of the measure and to express a spatial Markov property.
As a consequence of our characterisation, we establish the scaling limit of the set of thick points of planar simple random walk, stopped at the first exit time of a domain, by showing the weak convergence towards $\Mc$ of the point measure associated to the thick points. In particular, we obtain the convergence of the appropriately normalised number of thick points of random walk to a nondegenerate random variable. The normalising constant is different from that of the Gaussian free field, as conjectured in \cite{jego2020}. These results cover the entire subcritical regime.

A key new idea for this characterisation is to introduce measures describing the intersection between different independent Brownian trajectories and how they interact to create thick points.
%
%
%
\end{abstract}

\tableofcontents

\section{Introduction and main results}

The study of exceptional points of planar random walk has a long history.
In 1960, Erd\H{o}s and Taylor \cite{erdos_taylor1960} showed that the number of visits of the most visited site of a planar simple random walk after $n$ steps is asymptotically between $(\log n)^2/(4\pi)$ and $(\log n)^2/\pi$ and conjectured that the upper bound is sharp. This conjecture was proven forty years later by Dembo, Peres, Rosen and Zeitouni in the landmark paper \cite{dembo2001}. 
These authors also considered the set of thick points of the walk, where the walk has spent a time at least a fraction of $(\log n)^2$, and computed its asymptotic size at the level of exponents.
Their proof is based on planar Brownian motion and uses KMT-type approximations to transfer the results to random walk with increments having finite moments of all order. \cite{rosen2006} provided another proof of these results without the use of Brownian motion and \cite{BassRosen2007} extended them to planar random walk with increments having finite moment of order $3+\eps$. \cite{jego2020} streamlined the arguments by exploiting the links between the local times and the Gaussian free field (GFF) and extended the above results to walks with increments of finite variance and to more general graphs. \cite{AidekonHuShi2018} and \cite{jegoGMC} constructed simultaneously a random measure supported on the set of thick points of Brownian motion extending results of \cite{bass1994}. Finally, \cite{Okada2016} studied the most visited points of the inner boundary of the random walk range.

A closely related (but in fact distinct as we will argue below) area of research is the study of planar random walk run until a time close to the cover time. It has become very active since Dembo, Peres, Rosen and Zeitouni \cite{DPRZ2004} found the leading order term of the cover time for both planar Brownian motion and random walk settling a conjecture of Aldous \cite{Aldous1989}. Since then, the understanding of the behaviour of the walk in this regime has considerably improved. We mention a few works. On the torus, the multifractal structure of the set of thin/thick/late points has been studied \cite{DPRZ2006,CometsPopov2016,Abe15}, the subleading order of the cover time has been established \cite{Abe2017,BeliusKistler2017} and even the tightness of the cover time associated to Brownian motion on the 2D sphere is known \cite{BeliusRosenZeitouni2019b}. For a walk resampled every time it hits the boundary of a planar domain, the scaling limit of the set of thin/thick/late points has been established \cite{AbeBiskup}. The picture is even more complete on binary trees where the scaling limit of the cover time \cite{CortinesLouidorSaglietti2018,DRZ2019} as well as the scaling limit of the set of extreme points having maximal local times \cite{Abe2018} have been derived.

The current paper is closer to the setup of the first series of articles where the walk is stopped at the first exit time of a planar domain.
Its aim is to establish the scaling limit of the thick points of planar simple random walk stopped at the first exit time of a domain by showing that the point measure associated to the thick points converges to a nondegenerate random measure $\Mc$. This gives much finer information on the set of thick points and, as a corollary, we obtain the convergence of the appropriately normalised number of thick points of random walk to a nondegenerate random variable considerably improving the previously known above-mentioned results. In that sense, it is the final answer to the question raised by Erd\H{o}s and Taylor.

In this regime a comparison to the GFF is too rough, in contrast with the regime corresponding to times closer to cover time; and indeed, in this latter case the limiting measure is related to the so-called Liouville measure of GFF (see \cite{AbeBiskup} and see \cite{RobertVargas2010, DuplantierSheffieldGMC, RhodesVargasGMC, ShamovGMC, berestycki2017} for subcritical Liouville measures and Gaussian multiplicative measures).
In our delicate setting of limited time horizon, the limiting measure $\Mc$, that we can call ``Brownian multiplicative chaos'' in analogy to Gaussian multiplicative chaos measures, was introduced in \cite{bass1994,AidekonHuShi2018,jegoGMC} and was so far fairly mysterious. On the one hand, it shares a lot of similarities with the Liouville measure such as carrying dimension and conformal invariance. But on the other hand the measure $\Mc$ is very different in the sense that it is carried and entirely determined by the random fractal composed of a Brownian trace. One of the main result of this paper consists in characterising the law of the measure $\Mc$. We show that it is the only random Borel measure satisfying a list of natural properties which fix its average value and express a spatial Markov property. This demystifies the measure $\Mc$ and shows its universal nature.

We start by presenting our results on random walk. We then discuss our characterisation of Brownian multiplicative chaos.

\medskip

In this paper, we will consider simply connected domains with a boundary composed of a finite number of analytic curves. Such a continuous domain will be called a ``nice domain'' and a boundary point where the boundary is locally analytic will be called a ``nice point''.

\subsection{Scaling limit of thick points of planar random walk}
\label{subsec:RW}

We will extend the definition of the integer part function by setting for $x=(x_1,x_2) \in \R^2$, $\floor{x} = (\floor{x_1},\floor{x_2})$. For a nice domain $U$, a reference point $x_0 \in U$ and a large integer $N$, let $U_N$ and $\partial U_N$ be discrete approximations of $U$ and $\partial U$ defined as follows:
\[
U_N := \left\{ \floor{Nx}: x \in U,
\begin{array}{c}
\text{there~exists~a~path~in~} \Z^2 \text{~from~} \floor{Nx} \text{~to~} \floor{Nx_0} \\
\text{whose~distance~to~the~boundary~of~} NU \text{~is~at~least~} 1
\end{array}
\right\}
\]
and
\[
\partial U_N := \left\{ x \in U_N: \exists y \in \Z^2 \backslash U_N, \abs{x-y} = 1 \right\}.
\]
This intricate definition of $U_N$ is just to avoid issues with ``thin'' boundary pieces.\footnote{For instance, if the domain $U$ is a disc minus a slit, it might happen that two neighbouring vertices of $U \cap \frac1N \Z^2$ are actually at a macroscopic distance to one another in the internal metric of $U$.}
For $z \in \partial U$, we will abusively write $\floor{Nz}$ any point of $\partial U_N$ closest to $z$.
Let $(X_t)_{t \geq 0}$ be a continuous time simple random walk on $\Z^2$ with jump rate one (at every vertex, it waits an exponential time with parameter one before jumping) and define its hitting time of $\partial U_N$ and local times:
\[
\tau_{\partial U_N} := \inf \left\{ t \geq 0, X_t \in \partial U_N \right\}
\mathrm{~and~for~} x \in \Z^2, t >0,
\ell_x^t := \int_0^t \indic{X_s = x} ds.
\]
For $x, z \in \C$, we will denote by $\prob^{U_N}_x$ the probability measure associated to the walk $(X_t,t \leq \tau_{\partial U_N})$ starting at $X_0 = \floor{x}$ and $\prob_{x,z}^{U_N} := \prob_x^{U_N} \left( \cdot \left\vert X_{\tau_{\partial U_N} } = \floor{z} \right. \right)$.

Let $x_0 \in D$ and $z \in \partial D$ be a nice point.
Let $a \in (0,2)$ be a parameter measuring the thickness level,
\begin{equation}
\label{eq:c0}
g = \frac{2}{\pi}
\quad \mathrm{and} \quad
c_0 = \frac{2}{\pi} \Big( \gamma_{\rm EM} + \frac{1}{2} \log 8 \Big)
\end{equation}
be universal constants appearing in the asymptotic behaviour of the discrete Green function (see Lemma \ref{lem:Green}); here $\gamma_{\rm EM}$ stands for the Euler--Mascheroni constant.
We define a random Borel measure $\mu^{U,a}_{x_0;N}$ on $\C$ by setting for all Borel sets $A \subset \C$,
\begin{equation}
\label{eq:def_nu}
\mu^{U,a}_{x_0;N}(A)
:= \frac{\log N}{N^{2 - a}} \sum_{x \in \Z^2} \indic{x/N \in A} \indic{ \ell_x^{\tau_{\partial U_N}} \geq g a \log^2 N}
\mathrm{~under~} 
\prob^{U_N}_{Nx_0}.
\end{equation}
We also define the conditioned version $\mu^{U,a}_{x_0,z;N}$ of $\mu^{U,a}_{x_0;N}$ by replacing $\prob^{U_N}_{Nx_0}$ by $\prob^{U_N}_{Nx_0,Nz}$.

One of our main theorems is the following.

\begin{theorem}\label{th:convergence}
For all $a \in (0,2)$, the sequence
$\mu^{U,a}_{x_0;N}, N \geq 1,$
(resp. $\mu^{U,a}_{x_0,z;N}, N \geq 1$)
converges weakly relatively to the topology of weak convergence (resp. vague convergence) on $U$. Moreover, the limiting measure has the same distribution as $e^{c_0a/g} \Mc^{U,a}_{x_0}$ (resp. $e^{c_0a/g} \Mc^{U,a}_{x_0,z}$) built in \cite{bass1994,AidekonHuShi2018,jegoGMC}.
\end{theorem}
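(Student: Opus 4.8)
The plan is to deduce Theorem~\ref{th:convergence} from the characterisation theorem of Brownian multiplicative chaos (which identifies $\Mc^{U,a}_{x_0}$ as the unique random Borel measure satisfying the prescribed average-value and spatial Markov properties) together with a tightness/subsequential-limit argument. First I would establish \emph{tightness} of the sequence $\mu^{U,a}_{x_0;N}$ in the space of Borel measures on $U$ equipped with the topology of weak convergence (respectively vague convergence in the conditioned case). This reduces to controlling first moments $\E[\mu^{U,a}_{x_0;N}(A)]$ uniformly in $N$ on compact subsets, which in turn follows from the sharp asymptotics of the discrete Green function recalled in Lemma~\ref{lem:Green} and from standard estimates on the probability that the local time $\ell_x^{\tau_{\partial U_N}}$ at a site $x$ exceeds $ga\log^2 N$; the prefactor $\log N/N^{2-a}$ in \eqref{eq:def_nu} is precisely calibrated so that this expectation converges. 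Once tightness holds, every subsequential limit $\nu$ is a random Borel measure on $U$, and the task becomes to show that $\nu$ has the same law as $e^{c_0 a/g}\Mc^{U,a}_{x_0}$.

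The second and central step is to verify that any such subsequential limit $\nu$, after dividing by $e^{c_0 a/g}$, satisfies the axioms of the characterisation theorem. The \emph{average value} axiom amounts to computing $\lim_{N\to\infty}\E[\mu^{U,a}_{x_0;N}(A)]$ exactly and matching it (after the deterministic factor $e^{c_0a/g}$) with the prescribed first-moment formula for $\Mc^{U,a}_{x_0}$; here the constant $c_0$ from \eqref{eq:c0} enters through the subleading term in the Green function asymptotics, which is exactly why the exponential correction $e^{c_0a/g}$ appears. The \emph{spatial Markov property} is the more delicate part: one must show that, conditionally on the walk's trace inside a subdomain, the restriction of $\nu$ to the complement decomposes in the same Markovian way that defines $\Mc$. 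At the discrete level this uses the strong Markov property of $(X_t)$ at hitting times of subdomain boundaries, together with the observation — flagged in the abstract as the key new idea — that the thick-point measure decomposes according to how independent excursions/trajectories intersect and jointly accumulate local time. Passing this decomposition to the limit along the subsequence, using the convergence $\mu^{U,a}_{x_0;N}\to\nu$ and the joint convergence of the relevant trajectory data, yields the Markov property for $\nu$.

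The third step is then immediate: by the uniqueness clause of the characterisation theorem, $e^{-c_0a/g}\nu$ has the law of $\Mc^{U,a}_{x_0}$, independently of the subsequence, so the full sequence converges. The conditioned case $\mu^{U,a}_{x_0,z;N}$ is handled in parallel, replacing weak convergence by vague convergence on $U$ (the conditioning on $X_{\tau_{\partial U_N}}=\floor{Nz}$ creates a boundary effect near $z$ that is invisible in the vague topology), and using the analogous characterisation of $\Mc^{U,a}_{x_0,z}$; one should also record how the two statements are linked by disintegrating over the exit point, which gives a consistency check on the normalising constants.

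I expect the main obstacle to be the \textbf{second step}, specifically the transfer of the spatial Markov property from the discrete walk to the limiting measure. The difficulty is twofold: first, one must show that the discrete thick-point measure genuinely admits an exact (not merely approximate) Markovian decomposition in terms of intersecting sub-trajectories, which requires introducing and controlling the auxiliary intersection measures alluded to in the abstract and establishing their own convergence; second, one must ensure this decomposition is stable under the weak limit, i.e.\ that no thick-point mass is created or lost near the subdomain boundary in the $N\to\infty$ limit. Controlling this boundary contribution — ruling out anomalous accumulation of local time precisely where the trace meets the interface — will require a careful second-moment or exponential-moment estimate, and is likely where most of the technical work lies. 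By contrast, the first-moment computation underlying tightness and the average-value axiom, while requiring care with the Green function expansion, is essentially a calculation; and the final uniqueness step is a black-box application of the characterisation theorem.
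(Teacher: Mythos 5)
Your high-level strategy is the same as the paper's: establish tightness via first-moment estimates (driven by the discrete Green function asymptotics), then identify every subsequential limit through the characterisation theorem. However, there is a genuine gap in how you would \emph{apply} the characterisation. Theorem \ref{th:charac} does not characterise a single random measure $\Mc^{U,a}_{x_0,z}$; it characterises the entire multipoint \emph{process} $\left(\Mc^{\Dc,a}_{\Xc,\Zc},\, \Dc\Xc\Zc\in\Sc\right)$, and Properties \ref{charac1}--\ref{charac3} are statements about the whole family simultaneously. To verify them for a subsequential limit, you cannot work with the convergence of a single sequence $\mu^{U,a}_{x_0,z;N}\to\nu$; you must first extract a subsequence along which the \emph{uncountable} family of discrete multipoint measures $\mu^{\Dc,a}_{\Xc,\Zc;N}$, $\Dc\Xc\Zc\in\Sc$, converges jointly. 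This is precisely the content of Lemma \ref{lem:uncountable_extraction}, and it is nontrivial: Cantor's diagonal argument only handles a countable subfamily, and one then has to bootstrap the convergence to arbitrary nice domains, starting points and boundary exit points via monotonicity, absolute continuity (the $h$-transform via the Poisson kernel, see \eqref{eq:121b}), and the truncation Lemma \ref{lem:elementary}. Your proposal never flags this step, and without it the ``verify the Markov property for $\nu$'' step has no handle on the right-hand side of the decomposition, since those measures live in the uncountable index set.

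A second, smaller point: you locate the ``careful second-moment or exponential-moment estimate'' in the verification of the Markov property near subdomain boundaries. In the paper the truncated second-moment work (Proposition \ref{prop:uniform_integrability}, proved in Section \ref{sec:uniform integrability}) is used for \emph{uniform integrability} and a carrying-dimension bound. These are what let one (a) pass the first-moment limit inside the expectation to verify Property \ref{charac1} for the subsequential limit, and (b) verify the non-atomicity Property \ref{charac3}. The Markov property \ref{charac2} itself is checked at the discrete level by the decomposition into excursions hitting $\partial D'$ and an $L^1$ estimate on the cross terms, essentially parallel to the Brownian proof, and does not hinge on a second-moment boundary estimate. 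Finally, the paper proves the conditioned case first and then deduces the unconditioned one by averaging over the exit point via Lemma \ref{lem:elementary2} (weak convergence of the discrete Poisson kernel), rather than treating the two ``in parallel''; this ordering matters because the characterisation theorem is stated for the conditioned (boundary-pinned) measures.
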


In Section \ref{subsec:Brownian_multiplicative_chaos}, we recall a precise definition of the above-mentioned Brownian multiplicative chaos measures $\Mc^{U,a}_{x_0}$ and $\Mc^{U,a}_{x_0,z}$.

We now emphasise the difficulties inherent to the random walk setting that are not present in the Brownian motion case considered in \cite{bass1994,jegoGMC}.
Theorem \ref{th:convergence} looks very similar to \cite[Theorem 1.1]{jegoGMC} (see also \cite{bass1994} for partial results) which studies flat measures $\Mc_\eps, \eps>0,$ supported on the set of thick points of planar Brownian motion. See Section \ref{subsec:Brownian_multiplicative_chaos} for more details about this. But let us emphasise that the approach of \cite{jegoGMC} cannot be adapted to prove Theorem \ref{th:convergence} and that a new strategy is needed. Indeed, the proof of \cite[Theorem 1.1]{jegoGMC} is based on the $L^1$-convergence of $(\Mc_\eps(A), \eps >0)$ for all Borel set $A \subset \C$. This strong form of convergence is crucial to the strategy in \cite{jegoGMC}. Here, it is not even a priori clear how to build the random measures $\mu^{U,a}_{x_0,z;N}, N \geq 1$, on the same probability space so that $(\mu^{U,a}_{x_0,z;N}(A), N\geq 1)$ converges in $L^1$. For instance, coupling the random walks via the same Brownian motion through KMT-type couplings does not seem to be tractable, or is at least too rough. As mentioned in the introduction, our proof of Theorem \ref{th:convergence} will rely on a characterisation of the law of Brownian multiplicative chaos, which we describe below.

We first mention however that Abe and Biskup \cite{AbeBiskup} have recently established a result with a similar flavour but important differences. Indeed, they consider a random walk on a box with wired boundary conditions (so it is uniformly resampled on the boundary every time it touches the boundary) and run the walk up to a time proportional to the cover time. In this regime, the local times of the walk are very closely related to the Gaussian free field and indeed their limiting measure is the Liouville measure (in contrast to here).

A direct consequence of Theorem \ref{th:convergence} is the convergence of the appropriately scaled number of random walk's thick points. This answers a question raised in \cite{jego2020} and considerably improves the previous known estimates on the fractal dimension \cite{dembo2001,rosen2006,BassRosen2007,jego2020} of the set of thick points. For $a \in (0,2)$, we denote
\[
\Tc_N(a) := \left\{ x \in \Z^2, \ell_x^{\tau_{\partial U_N}} \geq \frac{2}{\pi} a \log^2 N \right\}.
\]
Recalling the definition \eqref{eq:c0} of $g$ and $c_0$, we have:

\begin{corollary}\label{cor:thick _points}
For all $a \in (0,2)$, the following convergence holds in distribution: under $\prob^{U_N}_{Nx_0}$,
\[
\frac{\log N}{N^{2-a}} \# \Tc_N(a) \xrightarrow[N \to \infty]{} e^{c_0a/g} \Mc^{U,a}_{x_0}(U).
\]
Moreover, the limit is nondegenerate, i.e. $\Mc^{U,a}_{x_0}(U) \in (0,\infty)$ a.s.
\end{corollary}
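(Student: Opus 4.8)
\emph{Plan.} The corollary will follow from Theorem~\ref{th:convergence} simply by testing the random measures against the constant function $1$. The starting point is the exact identity
\[
\frac{\log N}{N^{2-a}}\,\#\Tc_N(a) \;=\; \mu^{U,a}_{x_0;N}(\C)
\qquad\text{under }\prob^{U_N}_{Nx_0},
\]
which is immediate from the definition \eqref{eq:def_nu} together with the fact that the two thickness thresholds coincide, $ga\log^2 N=\tfrac 2\pi a\log^2 N$. So it suffices to prove that $\mu^{U,a}_{x_0;N}(\C)$ converges in distribution to $e^{c_0 a/g}\Mc^{U,a}_{x_0}(U)$.

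By Theorem~\ref{th:convergence}, $\mu^{U,a}_{x_0;N}$ converges in distribution to $e^{c_0 a/g}\Mc^{U,a}_{x_0}$ as a random element of the space of finite Borel measures on $U$ endowed with the topology of weak convergence. The total-mass functional $\nu\mapsto\nu(U)=\int_U 1\,d\nu$ is continuous for this topology (the constant $1$ being a bounded continuous function on $U$), so the continuous mapping theorem gives
\[
\mu^{U,a}_{x_0;N}(U)\;\xrightarrow[N\to\infty]{(\mathrm{d})}\;e^{c_0 a/g}\,\Mc^{U,a}_{x_0}(U).
\]
To pass from $\mu^{U,a}_{x_0;N}(U)$ to $\mu^{U,a}_{x_0;N}(\C)$ I would check that $\mu^{U,a}_{x_0;N}(\C)-\mu^{U,a}_{x_0;N}(U)$ tends to $0$ in probability. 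Only vertices of $U_N\setminus\partial U_N$ can carry positive local time, and these are of the form $\floor{Ny}$ with $y\in U$; hence any vertex $x$ contributing to this difference satisfies $\operatorname{dist}(x/N,\partial U)\le\sqrt 2/N$, and there are only $O(N)$ such vertices since $\partial U$ is a finite union of rectifiable curves. By the Green function asymptotics of Lemma~\ref{lem:Green}, uniformly in $x$,
\[
\prob^{U_N}_{Nx_0}\!\big(\ell_x^{\tau_{\partial U_N}}\ge ga\log^2 N\big)\le e^{-ga\log^2 N/(g\log N+O(1))}=O(N^{-a}),
\]
so the expectation of the difference is $O\big(N\cdot N^{-a}\cdot\tfrac{\log N}{N^{2-a}}\big)=O(\tfrac{\log N}{N})\to 0$, and Markov's inequality concludes. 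Combining the two displays yields the claimed convergence in distribution.

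For the nondegeneracy, I would invoke the known properties of subcritical Brownian multiplicative chaos (Section~\ref{subsec:Brownian_multiplicative_chaos} and \cite{bass1994,AidekonHuShi2018,jegoGMC}): for every $a\in(0,2)$ the measure $\Mc^{U,a}_{x_0}$ is almost surely non-zero, and $\Mc^{U,a}_{x_0}(U)$ has finite moments of order $p$ for $1\le p<2/a$; in particular, since $a<2$, $\E[\Mc^{U,a}_{x_0}(U)]<\infty$, so $\Mc^{U,a}_{x_0}(U)<\infty$ a.s. As $e^{c_0 a/g}$ is a positive finite constant, $e^{c_0 a/g}\Mc^{U,a}_{x_0}(U)\in(0,\infty)$ almost surely.

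Granting Theorem~\ref{th:convergence}, the only substantive point is the escape-of-mass control in the third paragraph — that thick points do not accumulate near $\partial U$ — and I expect this to be the main (if minor) obstacle in the argument; it is routine in view of Lemma~\ref{lem:Green} and standard simple-random-walk hitting estimates, and is in fact morally already contained in the assertion of Theorem~\ref{th:convergence} that the convergence holds in the \emph{weak}, and not merely the vague, topology on $U$.
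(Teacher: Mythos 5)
Your proof is correct and fills in precisely the argument the paper leaves implicit when it calls the corollary ``a direct consequence of Theorem~\ref{th:convergence}'': test the weak convergence of $\mu^{U,a}_{x_0;N}$ against the constant function~$1$, and handle the negligible mass that $\mu^{U,a}_{x_0;N}$ carries on the $O(N)$ lattice points whose rescaled positions fall just outside $U$. The escape-of-mass estimate and the appeal to known nondegeneracy of $\Mc^{U,a}_{x_0}$ are both sound.
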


As mentioned in \cite{jego2020}, despite the strong link between the local times and the GFF, this shows a subtle difference in the structure of thick points of random walk compared to those of the GFF which cannot be observed through rougher estimates such as the fractal dimension. Indeed, the analogue of Corollary \ref{cor:thick _points} with the local times replaced by half of the GFF squared uses a normalisation factor with $\sqrt{\log N}$ instead of $\log N$. See \cite{BiskupLouidor}.

\begin{remark}
To ease the exposition we decided to focus on the measures $\mu^{U,a}_{x_0;N}$ defined above, but one can consider random measures on $\C \times \R$ defined by: for $A \in \Bc(\bar{U})$ and $T \in \Bc(\R \cup \{+\infty\})$,
\[
\tilde{\mu}^{U,a}_{x_0;N}(A \times T) :=
\frac{\log N}{N^{2 - a}} \sum_{x \in \Z^2} \indic{x/N \in A} \indic{ \sqrt{\ell_x^{\tau_{\partial U_N}}} -  \sqrt{ga} \log N \in T}.
\]
Once the convergence of $\mu^{U,a}_{x_0;N}$ is established, it can be shown that $\tilde{\mu}^{U,a}_{x_0;N}, N \geq 1$, converges, relative to the topology of vague convergence on $\bar{U} \times (\R \cup \{+\infty\})$ to a product measure: $\Mc^{U,a}_{x_0}$ times an exponential measure. 
See \cite{jegoGMC} for the case of local times of Brownian motion.
\end{remark}

Finally, the convergence of thick points of random walk to Brownian multiplicative chaos opens the door to other scaling limit results. We mention the paper \cite{ABJL21} which builds and studies a multiplicative chaos associated to the so-called Brownian loop soup. When the intensity of the loop soup is critical, \cite{ABJL21} shows that the resulting chaos is closely related to Liouville measure elucidating connections between Brownian multiplicative chaos, Gaussian free field and Liouville measure. This identification of measures heavily relies on the scaling limit results of the current paper. A stronger form of convergence than what is stated in Theorem \ref{th:convergence} is actually needed in \cite{ABJL21}. This convergence is stated in Theorem \ref{th:extension} and is a by-product of our approach to Theorem \ref{th:convergence}. We preferred to defer the exposition of this result to Section \ref{sec:joint} because it requires the introduction of many more notations.

\subsection{Brownian multiplicative chaos: background and extension}\label{subsec:Brownian_multiplicative_chaos}

\paragraph*{Background}

This section recalls the definition of Brownian multiplicative chaos measure $\Mc^{U,a}_{x_0,z}$ as well as provides the extension of the results of \cite{bass1994,AidekonHuShi2018,jegoGMC} that we need. We follow the construction of \cite{jegoGMC} (see also \cite{bass1994} for partial results and \cite{AidekonHuShi2018} for a different construction). For a nice domain $U \subset \C$ and $x_0 \in U$, let $\prob^U_{x_0}$ be the law under which $(B_t, t \leq \tau_{\partial U})$ is a Brownian motion starting at $x_0$ and stopped at the first exit time of $U$:
\[
\tau_{\partial U} := \inf \left\{ t >0: B_t \in \partial U \right\}.
\]
For $x_0 \in U$ and a nice point $z \in \partial U$, we will also consider the conditional law $\prob_{x_0,z}^U := \prob_{x_0}^U \left( \cdot \left\vert B_{\tau_{\partial U}} = z \right. \right)$ which is rigorously defined for instance in \cite[Notation 2.1]{AidekonHuShi2018}.
For all $x \in U$ and $\eps >0$, define the local time $L_{x,\eps}$ of the circle $\partial D(x,\eps)$ up to time $\tau_{\partial U}$:
\[
L_{x,\eps} := \lim_{\substack{r \to 0\\r >0}} \frac{1}{2r} \int_0^{\tau_{\partial U}} \indic{\eps - r \leq \abs{B_t-x} \leq \eps + r} dt
\]
with the convention that $L_{x,\eps}=0$ if the disc $D(x,\eps)$ is not fully included in $U$.
\cite[Proposition 1.1]{jegoGMC} shows that these local times are well-defined for all $x \in U$ and $\eps >0$ simultaneously. For all parameter values $a \in (0,2)$ measuring the thickness level, we can thus define the random measure
\begin{equation}
\label{eq:def_measure_one_point}
A \in \Bc(\C) \mapsto \abs{\log \eps} \eps^{-a} \int_A \indic{ \frac{1}{\eps} L_{x,\eps} \geq 2a \abs{\log \eps}^2 } dx.
\end{equation}
\cite{jegoGMC} shows that for all $a \in (0,2)$ and under $\prob_{x_0,z}^U$, the previous measure converges in probability (relatively to the weak convergence) as $\eps \to 0$ to a nondegenerate random measure $\Mc^{U,a}_{x_0,z}$, our object of interest. Let us point out that this measure can also be constructed by exponentiating the square root of the local times $L_{x,\eps}$, justifying the name ``Brownian multiplicative chaos''. This random measure is conformally covariant and, almost surely, it is nondegenerate, supported on the set of thick points of Brownian motion and its carrying dimension equals $2-a$ (see e.g. \cite[Corollary 1.4]{jegoGMC}).

\paragraph*{Extension}

In this paper, a crucial new idea will be to consider the ``multipoint'' analogue of this measure. We will denote by $\Sc$ the collection of sets
\begin{equation}
\label{def:setSc}
\Dc\Xc\Zc = \left\{ (D_i,x_i,z_i), i =1 \dots r \right\}
\end{equation}
where $r \geq 1$, for all $i=1 \dots r$, $D_i$ is a nice domain, $x_i \in D_i$, $z_i \in \partial D_i$ is a nice boundary point, and the $z_i$'s are pairwise distinct points (i.e. $z_i \neq z_j$ for all $i \neq j$). If $\Dc\Xc\Zc \in \Sc$, we will (with some abuse of notations) see the set $\Dc\Xc\Zc$ as a triplet $\Dc, \Xc, \Zc$ of domains, starting points and exit points. We will for instance write ``$D \in \Dc$'' when we mean that we pick a domain that occurs in $\Dc\Xc\Zc$. Similarly, we will write $\Dc\Xc$ when we forget about the exit points.

We now define the multipoint analogue of $\Mc^{U,a}_{x_0,z}$. Let $\Dc\Xc\Zc = \{ (D_i,x_i,z_i), i = 1 \dots r\} \in \Sc$. For all $i=1 \dots r$, we consider independent Brownian motions distributed according to $\P_{x_i,z_i}^{D_i}$ and we denote by $L_{x,\eps}^{(i)}$ their associated local times. For all thickness level $a \in (0,2)$ and Borel set $A \subset \C$, we define
\[
\Mc^{\Dc,a}_{\Xc,\Zc;\eps}(A) :=
\abs{\log \eps} \eps^{-a} \int_A \indic{ \frac{1}{\eps} \sum_{i=1}^r L_{x,\eps}^{(i)} \geq 2a \abs{\log \eps}^2 } \indic{\forall i=1 \dots r, L_{x,\eps}^{(i)}>0} dx.
\]
We emphasise that, in this definition, the thick points arise from the interaction of the different trajectories. In particular, the single trajectories are not required to be $a$-thick. In fact, as we will see in Proposition \ref{prop:intersection_multipoint}, a single trajectory will typically be $\alpha$-thick where $\alpha$ is uniformly distributed in $[0,a]$.
Note also that the normalisation is the same as the individual measures \eqref{eq:def_measure_one_point}. This indicates that they contribute in the same manner to the occurrence of thick points.

A rather simple modification of \cite[Theorem 1.1]{jegoGMC} shows:

\begin{proposition}\label{prop:def_measures}
For all $a \in (0,2)$, relative to the topology of weak convergence, the sequence of random measures $\Mc^{\Dc,a}_{\Xc,\Zc;\eps}$ converges as $\eps \to 0$ to some random measure $\Mc^{\Dc,a}_{\Xc,\Zc}$ in probability.
\end{proposition}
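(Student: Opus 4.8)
The plan is to reduce Proposition~\ref{prop:def_measures} to the already-established convergence of the single-trajectory measures \eqref{eq:def_measure_one_point}, by expanding the indicator $\indic{\frac{1}{\eps}\sum_i L^{(i)}_{x,\eps}\geq 2a|\log\eps|^2}$ according to how the total local time splits among the $r$ independent trajectories. First I would set up the natural discretisation of the thickness parameter: for a mesh $\delta>0$ write the region $\{ \frac{1}{\eps}\sum_i L^{(i)}_{x,\eps}\geq 2a|\log\eps|^2,\ \forall i\ L^{(i)}_{x,\eps}>0\}$ as an essentially disjoint union over vectors $(\alpha_1,\dots,\alpha_r)$ in a $\delta$-grid of the simplex $\{\alpha_i\geq 0,\ \sum_i\alpha_i\geq a\}$ of the events $\{\frac{1}{\eps}L^{(i)}_{x,\eps}\approx 2\alpha_i|\log\eps|^2\}$. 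On each such piece the trajectories decouple: conditionally on the $i$-th trajectory being exactly $\alpha_i$-thick at $x$ and at scale $\eps$, the law of $L^{(i)}_{\cdot,\eps'}$ for $\eps'\leq\eps$ near $x$ is, by the strong Markov property applied at the hitting time of $\partial D(x,\eps)$ together with the standard $0$-dimensional Bessel (squared local time) description of the radial local-time process, a deterministic time-change of the single-point picture. This is exactly the mechanism already exploited in \cite{jegoGMC}, and the "simple modification" alluded to before the Proposition.

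The key steps, in order, would be: (1) show tightness of $\Mc^{\Dc,a}_{\Xc,\Zc;\eps}$ as $\eps\to 0$ in the space of Radon measures, e.g.\ via a first-moment bound $\E[\Mc^{\Dc,a}_{\Xc,\Zc;\eps}(A)]\leq C|A|$ uniform in $\eps$ (this moment is computable by conditioning on the local times at scale $\eps$, each factor contributing a Gaussian-type tail that produces the compensating $|\log\eps|\eps^{-a}$); (2) prove convergence of the one-dimensional "marginals": for fixed $A$ and fixed grid vector $\vec\alpha$, the contribution of the piece where trajectory $i$ is $\alpha_i$-thick converges, as $\eps\to 0$, to (a constant times) a product over $i$ of independent copies of the single-point chaos $\Mc^{D_i,\alpha_i}_{x_i,z_i}$ restricted appropriately — or more precisely to a mixture of such; here I would invoke \cite[Theorem~1.1]{jegoGMC} for each trajectory separately after conditioning, using that conditionally the measures are built from the same circle-average local-time fields as in the single-trajectory construction; (3) identify the limit by letting the mesh $\delta\to 0$, recognising the $\delta$-sums as Riemann sums that converge to an integral over the simplex $\{\sum_i\alpha_i=a\}$ of the "product Brownian chaos" measures, which pins down $\Mc^{\Dc,a}_{\Xc,\Zc}$ uniquely and shows it does not depend on the mesh; (4) upgrade convergence of $\Mc^{\Dc,a}_{\Xc,\Zc;\eps}(A)$ from convergence along the grid to genuine convergence in probability by a Cauchy-in-probability argument, controlling the error between the true event and its $\delta$-grid approximation by a second-moment estimate that vanishes as $\delta\to 0$ uniformly in $\eps$; (5) pass from convergence of $\Mc^{\Dc,a}_{\Xc,\Zc;\eps}(A)$ for each fixed $A$ in a countable generating algebra to weak convergence of the measures, using tightness from step~(1).

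The main obstacle I expect is step~(4): controlling the discrepancy between the sharp constraint $\sum_i L^{(i)}_{x,\eps}=2a|\log\eps|^2$ and its discretised version uniformly in $\eps$, i.e.\ showing that the "near-boundary" contributions where some $\alpha_i$ is within $\delta$ of a grid point carry asymptotically negligible mass. This is a second-moment computation involving two points $x,y$ and the joint law of the $r$ pairs of local times $(L^{(i)}_{x,\eps},L^{(i)}_{y,\eps})$; the delicate part is that the trajectories must \emph{each} be positive at both $x$ and $y$, so one cannot treat them fully independently, and the interaction (a trajectory passing near both $x$ and $y$) must be handled via the decomposition into excursions between $\partial D(x,\eps)$ and $\partial D(y,\eps)$ as in \cite{jegoGMC}. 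Once the uniform-in-$\eps$ second-moment control is in place, the remaining arguments are routine adaptations of \cite[Theorem~1.1]{jegoGMC}, applied trajectory-by-trajectory.
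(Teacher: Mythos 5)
Your overall architecture -- discretise the thickness vector over the simplex, establish convergence of the fixed-thickness pieces, then pass to a Riemann sum -- matches the paper, and your identification of the two-point (joint local time) estimate as the technical heart is also right. But there is a genuine gap in step~(2), which is where the paper in fact concentrates its effort. You claim the fixed-$\vec\alpha$ piece converges to ``a product over $i$ of independent copies of the single-point chaos $\Mc^{D_i,\alpha_i}_{x_i,z_i}$,'' to be obtained by ``invoking \cite[Theorem~1.1]{jegoGMC} for each trajectory separately after conditioning.'' This is not correct. Each $\Mc^{D_i,\alpha_i}_{x_i,z_i}$ is a singular measure carried by a fractal of dimension $2-\alpha_i<2$; there is no sense in which one can multiply them, and conditioning on one trajectory's local-time field does not allow you to ``invoke'' the single-trajectory theorem for the others, because the constraint $\frac1\eps L^{(i)}_{x,\eps}\approx 2\alpha_i|\log\eps|^2$ is imposed at the \emph{same} spatial point $x$ across all trajectories, and the limiting object lives on the intersection of $r$ fractal sets. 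That limiting object is the intersection measure $\bigcap_{i=1}^r\Mc^{D_i,\alpha_i}_{x_i,z_i}$ of Proposition~\ref{prop:intersection}, which is a genuinely new construction: the paper builds it by induction on $r$, showing at each step (via the good/bad event decomposition from \cite{jegoGMC} and an inherited energy estimate) that $A\mapsto|\log\eps|\eps^{-\alpha_r}\int_A\indic{\frac1\eps L^{(r)}_{x,\eps}\geq 2\alpha_r|\log\eps|^2}\bigcap_{i<r}\Mc^{D_i,\alpha_i}_{x_i,z_i}(dx)$ is Cauchy in $L^2$, and then that the intersection carries dimension $2-\sum\alpha_i$ so the recursion can continue. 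Without this, the single-trajectory result gives you no handle on the fixed-$\vec\alpha$ pieces, and your step~(4) has nothing to apply the second-moment estimate to.

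A secondary remark on step~(4): the paper's control of the discretisation error (mesh $\to 0$) is not a second-moment estimate but a dominated-convergence argument in $L^1$ over the simplex variable, once the $L^1$ convergence for each fixed $\vec\alpha$ (i.e.\ Proposition~\ref{prop:intersection} plus its refinement in Remark~\ref{rem:intersection}, which tracks a window of local-time deviations) is in hand. The $L^2$ work all lives inside the proof of Proposition~\ref{prop:intersection}. So the correct organisation is: first prove the intersection-measure convergence at fixed thicknesses by induction on $r$ using $L^2$-Cauchy and energy estimates (this is where the $\partial D(x,\eps)$--$\partial D(y,\eps)$ excursion analysis you mention actually enters), and only then run the discretisation-and-Riemann-sum argument, which is comparatively soft.
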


The proof of this result is contained in Appendix \ref{app:intersection}. Let us comment that $\Mc^{\Dc,a}_{\Xc,\Zc}$ clearly vanishes almost surely if $\bigcap_{i=1}^r D_i = \varnothing$. Section \ref{subsec:further_results} investigates some further properties of this multipoint version of Brownian multiplicative chaos. In particular, we explain that we can express $\Mc^{\Dc,a}_{\Xc,\Zc}$ in terms of the integral of the ``intersection'' of one-point Brownian multiplicative chaos measures
\[
\bigcap_{i=1}^r \Mc_{x_i,z_i}^{D_i,a_i}.
\]
This ``intersection measure'' is a natural measure supported on the intersection of the set of thick points associated to each single Brownian motion with suitable thickness level. Further surprising properties of these measures are discussed in Section \ref{subsec:further_results}. See in particular Proposition \ref{prop:intersection_multipoint}.

Finally, we will consider the process of measures $\left (\Mc^{\Dc,a}_{\Xc,\Zc}, \Dc\Xc\Zc \in \Sc \right)$. We have already defined the one-dimensional marginals of this process. The definition of the finite-dimensional marginals is done in the following way: if $\Dc_j\Xc_j\Zc_j \in \Sc, j =1 \dots J$, for all $(D,x_0,z)$ appearing in one of the $\Dc_j\Xc_j\Zc_j$, we always use the same Brownian motion from $x_0$ to $z$ to define the measures $\Mc_{\Xc_j,\Zc_j}^{\Dc_j,a}$.
As before, for different triplets $(D,x_0,z)$, we use independent Brownian motions.
In particular, if $\Dc\Xc\Zc \cap \Dc'\Xc'\Zc' = \varnothing$, the measures $\Mc^{\Dc,a}_{\Xc,\Zc}$ and $\Mc_{\Xc',\Zc'}^{\Dc',a}$ are independent. This definition is consistent and thus uniquely defines the process $\left (\Mc^{\Dc,a}_{\Xc,\Zc}, \Dc\Xc\Zc \in \Sc \right)$. We mention that we will sometimes write $\Mc_{\Dc\Xc\Zc}^a$ instead of $\Mc_{\Xc,\Zc}^{\Dc,a}$ to clarify the situation.

\subsection{Characterisation of Brownian multiplicative chaos}
\label{subsec:charac}

We can now state our characterisation of the law of Brownian multiplicative chaos.
We start off by introducing some complex analysis notations. Let $a \in (0,2)$ be a thickness level.
For any nice domain $D \subset \C$, $x \in D$ and a nice point $z \in \partial D$, we will denote by $\CR(x,D)$ the conformal radius of $D$ seen from $x$, $G^D$ the Green function of $D$ with zero boundary conditions and $H^D(x,z)dz = \PROB{x}{B_{\tau_{\partial D}} \in dz}$ the Poisson kernel or harmonic measure of $D$. See Section \ref{subsec:notations} for precise definitions.
We set
\begin{equation}
\label{eq:def_psi}
\psi^{D,a}_{x_0,z}(x) := \CR(x,D)^a G^D(x_0,x) \frac{H^D(x,z)}{H^D(x_0,z)}.
\end{equation}
By convention, we will set $\psi^{D,a}_{x_0,z}(x) = 0$ if $x \notin D$.
We also introduce, for any $r \geq 1$, a notation for the $(r-1)$-dimensional simplex
\begin{equation}
\label{eq:def_simplex}
E(a,r) := \left\{ \mathsf{a} = (a_1, \dots, a_r) \in (0,a]^r: a_1 + \dots + a_r = a \right\}.
\end{equation}
The Lebesgue measure on $E(a,r)$ will be denoted by $d \mathsf{a} = da_1 \dots da_{r-1}$.

We are about to consider properties characterising the law of the process $\left (\Mc^{\Dc,a}_{\Xc,\Zc}, \Dc\Xc\Zc \in \Sc \right)$ defined in Section \ref{subsec:Brownian_multiplicative_chaos}. The most important one will be the spatial Markov property (Property \ref{charac2}). Because it will be notationally heavy, we first present a simple particular case of it which explains the main idea. Let $\Dc\Xc\Zc \in \Sc$ be of the form $\Dc\Xc\Zc=\{(D,x_0,z)\}$. Let $D'$ be a nice subset of $D$ containing $x_0$. Then Property \ref{charac2} amounts to saying that:
\begin{equation}
\label{eq:baby_Markov}
\Mc^{D,a}_{x_0,z}
\quad \mathrm{and} \quad
\Mc_{x_0,Y}^{D',a} + \Mc_{Y,z}^{D,a} + \Mc_{(D',x_0,Y),(D,Y,z)}^{a}
\end{equation}
have the same law, where $Y$ has the law of $B_{\tau_{\partial D'}}$ under $\prob_{x_0,z}^D$. Conditionally on $\{ Y=y \}$, the joint law of the measures $\Mc_{x_0,Y}^{D',a}$, $\Mc_{Y,z}^{D,a}$ and $\Mc_{(D',x_0,Y),(D,Y,z)}^{a}$ is by definition that of $\Mc_{x_0,y}^{D',a}$, $\Mc_{y,z}^{D,a}$ and $\Mc_{(D',x_0,y),(D,y,z)}^{a}$. This property comes from the following simple observation. Let $(B_t, t \leq \tau_{\partial D})$ be a Brownian motion in $D$ starting at $x_0$ and conditioned to exit $D$ through $z$. We divide $(B_t, t \leq \tau_{\partial D})$ into $(B_t, t\leq \tau_{\partial D'})$ and $(B_t, \tau_{\partial D'} \leq t \leq \tau_{\partial D})$. An $a$-thick point for the overall trajectory is either entirely generated by one of the two small trajectories and missed by the other one, or comes from the intersection of both.

\medskip

We now explain our characterisation. Let $\left (\mu^{\Dc,a}_{\Xc,\Zc}, \Dc\Xc\Zc \in \Sc \right)$ be a stochastic process taking values in the set of finite Borel measures.
We consider the following properties:
\begin{enumerate}[label=(\subscript{P}{{\arabic*}})]
\item(Average value)
\label{charac1}
For all $\Dc\Xc\Zc = \left\{ (D_i,x_i,z_i), i=1 \dots r \right\} \in \Sc$ and for all Borel set $A \subset \C$,
\begin{align*}
& \Expect{ \mu^{\Dc,a}_{\Xc,\Zc}(A) }
= \int_A dx \int_{\mathsf{a} \in E(a,r)} d \mathsf{a} \prod_{k=1}^r \psi_{x_k,z_k}^{D_k,a_k}(x).
\end{align*}
\item(Markov property)
\label{charac2}
Let $\Dc\Xc\Zc \in \Sc$, $(D,x_0,z) \in \Dc\Xc\Zc$ and let $D'$ be a nice subset of $D$ containing $x_0$. Let $Y$ be distributed according to $B_{\tau_{\partial D'}}$ under $\P_{x_0,z}^D$. The joint law of $(\mu_{\Xc',\Zc'}^{\Dc',a}, \Dc'\Xc'\Zc' \subset \Dc\Xc\Zc)$ is the same as the joint law given by for all $\Dc'\Xc'\Zc' \subset \Dc\Xc\Zc$,
\[
\left\{
\begin{array}{l}
\mu_{\Xc',\Zc'}^{\Dc',a}
\mathrm{~if~} (D,x_0,z) \notin \Dc'\Xc'\Zc', \\
\mu_{\bar{\Dc}\bar{\Xc}\bar{\Zc} \cup \{(D',x_0,Y)\} }^a
+ \mu_{\bar{\Dc}\bar{\Xc}\bar{\Zc} \cup \{(D,Y,z)\} }^a
+ \mu_{\bar{\Dc}\bar{\Xc}\bar{\Zc} \cup \{(D',x_0,Y), (D,Y,z)\} }^a
\mathrm{~otherwise},
\end{array}
\right.
\]
where in the second line we denote $\bar{\Dc}\bar{\Xc}\bar{\Zc} = \Dc'\Xc'\Zc' \backslash \{(D,x_0,z)\}$.
\item(Independence)
\label{charac2bis}
For all disjoint sets $\Dc\Xc\Zc, \Dc'\Xc'\Zc' \in \Sc$, the measures $\mu^{\Dc,a}_{\Xc,\Zc}$ and $\mu_{\Xc',\Zc'}^{\Dc',a}$ are independent.
\item(Non-atomicity)
\label{charac3}
For all $\Dc\Xc\Zc \in \Sc$, with probability one, simultaneously for all $x \in \C$, $\mu^{\Dc,a}_{\Xc,\Zc}(\{x\}) = 0$.
\end{enumerate}

\begin{theorem}\label{th:charac}
Let $a \in (0,2)$. The process
$\left (\Mc^{\Dc,a}_{\Xc,\Zc}, \Dc\Xc\Zc \in \Sc \right)$
from Section \ref{subsec:Brownian_multiplicative_chaos} satisfies Properties \ref{charac1}-\ref{charac3}. Moreover, if
$\left (\mu^{\Dc,a}_{\Xc,\Zc}, \Dc\Xc\Zc \in \Sc \right)$
is another process taking values in the set of finite Borel measures satisfying Properties \ref{charac1}-\ref{charac3}, then it has the same law as
$\left (\Mc^{\Dc,a}_{\Xc,\Zc}, \Dc\Xc\Zc \in \Sc \right)$.
\end{theorem}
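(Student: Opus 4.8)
We prove the two assertions in turn.

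\emph{The process $\Mc$ satisfies \ref{charac1}--\ref{charac3}.} Property \ref{charac2bis} is built into the definition of the process in Section~\ref{subsec:Brownian_multiplicative_chaos} (distinct triples carry independent Brownian motions). Property \ref{charac3} follows from the fact, recalled after \eqref{eq:def_measure_one_point}, that the one-point measures are non-atomic with carrying dimension $2-a>0$, combined with the description of $\Mc^{\Dc,a}_{\Xc,\Zc}$ in terms of one-point intersection measures. For the average value \ref{charac1} one computes $\E\big[\Mc^{\Dc,a}_{\Xc,\Zc;\eps}(A)\big]$ and lets $\eps\to0$ via Proposition~\ref{prop:def_measures}: by independence of the $r$ trajectories this equals $|\log\eps|\,\eps^{-a}\int_A \P\big(\tfrac{1}{\eps}\sum_i L^{(i)}_{x,\eps}\ge 2a|\log\eps|^2,\ \forall i\ L^{(i)}_{x,\eps}>0\big)\,dx$, and conditioning each trajectory on hitting $\partial D(x,\eps)$ factorises the integrand: the hitting probabilities produce the factors $G^{D_k}(x_k,x)H^{D_k}(x,z_k)/H^{D_k}(x_k,z_k)$, the (asymptotically exponential) conditional local times produce, on the event that trajectory $k$ contributes $\approx 2a_k|\log\eps|^2$, the factor $\CR(x,D_k)^{a_k}$, and the convolution over the splittings $a_1+\dots+a_r=a$ yields precisely $\int_{E(a,r)}d\mathsf{a}$; this is the modification of \cite[Theorem~1.1]{jegoGMC} carried out in Appendix~\ref{app:intersection}. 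For the Markov property \ref{charac2}, decompose the Brownian motion from $x_0$ conditioned to exit $D$ at $z$ at the time $\tau_{\partial D'}$: conditionally on $Y=B_{\tau_{\partial D'}}$ the two pieces are independent, with laws $\P^{D'}_{x_0,Y}$ and $\P^{D}_{Y,z}$. A point carrying chaos mass for a family $\Dc'\Xc'\Zc'\ni(D,x_0,z)$ falls into one of three cases --- the first piece's local time there is negligible, or the second's is, or both pieces contribute macroscopically --- and the first moment just computed shows that the ``nonzero but negligible'' configurations carry no mass in the limit; in particular (a point used implicitly throughout) a one-point chaos measure a.s.\ gives zero mass to the range of an independent Brownian motion, so the three contributions do not overlap. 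This gives the a.s.\ identity $\Mc^{\Dc',a}_{\Xc',\Zc'}=\Mc^a_{\bar{\Dc}\bar{\Xc}\bar{\Zc}\cup\{(D',x_0,Y)\}}+\Mc^a_{\bar{\Dc}\bar{\Xc}\bar{\Zc}\cup\{(D,Y,z)\}}+\Mc^a_{\bar{\Dc}\bar{\Xc}\bar{\Zc}\cup\{(D',x_0,Y),(D,Y,z)\}}$ with $\bar{\Dc}\bar{\Xc}\bar{\Zc}=\Dc'\Xc'\Zc'\setminus\{(D,x_0,z)\}$, and consistency of the coupling over all $\Dc'\Xc'\Zc'\subset\Dc\Xc\Zc$ promotes it to the stated joint-law identity.

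\emph{Uniqueness.} Let $\mu=(\mu^{\Dc,a}_{\Xc,\Zc})$ satisfy \ref{charac1}--\ref{charac3}. The idea is to use \ref{charac2} as an exploration device. For a single $\mu^{D,a}_{x_0,z}$, run an auxiliary Brownian motion $B$ from $x_0$, record its successive exit points $Y_1,Y_2,\dots$ of a fine net of domains (say small discs following the path, or a nested family around $x_0$ refined further after each new centre), and apply \ref{charac2} repeatedly. This rewrites $\mu^{D,a}_{x_0,z}$, up to a remainder tending to $0$, as a sum of one-point pieces attached to the individual increments of $B$ plus multipoint intersection measures recording the interactions of distinct temporal pieces of $B$ at a common point; in the fine limit the one-point pieces become negligible and the mass is carried entirely by the intersection contributions. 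Conditionally on the exploration data $(Y_k)_k$ the increments of $B$ are mutually independent, so \ref{charac1} fixes the conditional means of all the one-point and multipoint pieces; \ref{charac2bis} makes pieces attached to disjoint portions of the exploration conditionally independent; and \ref{charac3} controls the limiting procedure. Using, in addition, quantitative moment bounds on the multipoint measures --- the estimates underlying Proposition~\ref{prop:def_measures} --- one shows that the resulting sum of intersection measures converges, as the net is refined, to a measure which is a measurable functional of $B$, and that \ref{charac1} identifies this functional as exactly the one used in Section~\ref{subsec:Brownian_multiplicative_chaos} to define $\Mc^{D,a}_{x_0,z}$; hence $\mu^{D,a}_{x_0,z}\stackrel{d}{=}\Mc^{D,a}_{x_0,z}$. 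The same exploration applied to a general $\Dc\Xc\Zc$ (exploring each of the $r$ trajectories, one scale at a time, while tracking the --- by \ref{charac2bis} consistent --- couplings) identifies the whole process $(\mu^{\Dc,a}_{\Xc,\Zc})_{\Dc\Xc\Zc\in\Sc}$ with $(\Mc^{\Dc,a}_{\Xc,\Zc})_{\Dc\Xc\Zc\in\Sc}$.

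\emph{Main obstacle.} The hard part is the control of the multipoint intersection measures in the uniqueness argument: the mass of $\mu^{D,a}_{x_0,z}$ is carried precisely by these terms, whose number grows with the fineness of the exploration, so showing that their sum has a determined limit requires genuine quantitative input --- first moments from \ref{charac1} (whose $\psi$-factors carry the decisive powers of the conformal radius) together with second-moment estimates on the multipoint measures. This is exactly where the new idea of the paper, namely the introduction of the measures describing how independent Brownian trajectories interact to create thick points, is indispensable: without them neither the forward decomposition \ref{charac2} nor its iteration in the uniqueness proof would even be formulable.
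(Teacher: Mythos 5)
Your existence argument tracks the paper: \ref{charac2bis} comes for free from the independent Brownian motions, \ref{charac3} from the carrying-dimension estimate for the intersection measures (Proposition \ref{prop:intersection}\textit{(v)} combined with the disintegration of Proposition \ref{prop:intersection_multipoint}), \ref{charac1} from the corresponding first-moment calculation, and \ref{charac2} from the three-way split of the indicator $\indic{\frac{1}{\eps}L_{x,\eps}\geq 2a|\log\eps|^2}$ and convergence of each piece. That part is fine.

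The uniqueness sketch, however, has a genuine gap. You describe an ``exploration'' by hitting points of discs (or some nested family around $x_0$), claim that in the fine limit ``the one-point pieces become negligible and the mass is carried entirely by the intersection contributions,'' and then appeal to ``second-moment estimates on the multipoint measures'' to conclude. None of this matches how the argument actually closes, and it is not clear that it can be made to close along those lines. First, the paper's decomposition is not by discs but by \emph{long narrow vertical strips} $D\cap(2^{-p}(q,q+2)\times\R)$, hit in succession by an auxiliary Brownian motion. This choice is not cosmetic: it makes the pieces into excursions from interior points to boundary points, which is exactly the form of the objects $\mu^{\Dc,a}_{\Xc,\Zc}$ you control via \ref{charac1}--\ref{charac2bis}. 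Second, the pivotal step is \emph{not} to show that one-point pieces vanish and to estimate second moments of the intersection pieces. Rather, one forms the conditional expectation $\mu_p(dx):=\E\big[\mu^{D,a}_{x_0,z}(dx)\,\big|\,\Fc_p\big]$, where $\Fc_p$ is generated by the hitting points; by \eqref{eq:decomposition} and \ref{charac1} this quantity (given explicitly by \eqref{eq:charac_def_mup2}) depends on the process only through its deterministic mean structure, and $(\mu_p)_p$ is a non-negative measure-valued \emph{martingale} (Lemma \ref{lem:charac}). One must then prove $\mu^{D,a}_{x_0,z}=\mu_\infty$ a.s., and the mechanism for the reverse inequality is the ``reverse Jensen'' bound of Biskup--Louidor (Lemma \ref{lem:reverse_Jensen}), which requires \emph{conditional independence between separated strips} (here is where \ref{charac2bis} enters) together with a uniform control that no single piece carries macroscopic mass. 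That last control is exactly where \ref{charac3} is indispensable: via Lemma \ref{lem:charac3old}, one finds a deterministic direction along which lines carry zero mass, passes to the Cantor-type set $K^\infty$ of \eqref{eq:K_infty} to ensure the pieces are well-separated, and uses dominated convergence in \eqref{eq:lem_used} to kill the truncated tail. Your sketch does not articulate any of these three ingredients --- the martingale structure, the reverse-Jensen inequality, or the specific use of non-atomicity through Lemma \ref{lem:charac3old} and the Cantor set --- and without them the argument does not close. In particular, appealing to ``second-moment estimates'' misidentifies the kind of input needed: \ref{charac1}--\ref{charac3} provide only first moments and qualitative non-atomicity, and the proof is built to run on exactly these.
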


Biskup and Louidor \cite{BiskupLouidor} provide a somewhat similar characterisation of the Liouville measure. The main difference is that Properties \ref{charac2} and \ref{charac2bis} are replaced by how the spatial Markov property of the Gaussian free field translates to the Liouville measure.

Other characterisations have been formulated before: let $D$ be a fixed nice domain, $x_0 \in D$, $z \in \partial D$ nice and consider the pair given by the measure $\Mc^{D,a}_{x_0,z}$ together with the Brownian motion $(B_t, t \leq \tau_{\partial D})$ from which it has been built. Then the pair $(\Mc^{D,a}_{x_0,z},B)$ is uniquely characterised by

$\bullet$ the measurability of $\Mc^{D,a}_{x_0,z}$ with respect to the Brownian path $B$,

$\bullet$ the way the law of the path $B$ is changed given a sample of $\Mc^{D,a}_{x_0,z}$.
\newline
See Theorem 5.2 of \cite{bass1994}. See also Proposition \ref{prop:other_charac} for an extension of this characterisation to finitely many trajectories. The advantage of this characterisation is that it considers only one domain, with given starting and ending points and does not need to rely on the multipoint version of Brownian multiplicative chaos. But its drawback is that it refers explicitly to the underlying Brownian motion and it seems to be less applicable in practice. For instance, in the context of our application to random walk, it does not seem easy to apply this characterisation (even measurability is not a priori clear).

Let us also mention that the proof of Theorem \ref{th:charac} provides a construction of $\Mc^{D,a}_{x_0,z}$ through a martingale approximation (see Lemma \ref{lem:charac}). This is very similar to some aspects of the construction of \cite{AidekonHuShi2018} except that they divide the domain into small dyadic squares rather than long narrow rectangles. This might seem to be a cosmetic difference but it is in fact significant since it leads to a decomposition of the Brownian path into excursions from internal to boundary point rather than from boundary to boundary. This is at the heart of what leads to the recursive decomposition of the proof and in turn to the theorem, since the measure $\Mc^{D,a}_{x_0,z}$ is also itself of this type.

Finally, it is possible that Properties \ref{charac1}-\ref{charac2bis} are enough to characterise the law, but Property \ref{charac3} is necessary for our current proof; see especially Lemma \ref{lem:charac3old}. In practice, in our context of uniform measure on thick points of random walk, Property \ref{charac3} is a consequence of uniform-integrability-type estimates that are needed in order to verify Property \ref{charac1}.

\subsection{Further results on multipoint Brownian multiplicative chaos}\label{subsec:further_results}

In this section, we study in greater detail the multipoint version of Brownian multiplicative chaos measures.
We start by introducing the ``intersection'' of Brownian multiplicative chaos measures: a measure whose support is included in the intersection of the support of each intersected measure.
Let $\Dc\Xc\Zc = \{(D_i,x_i,z_i), i=1 \dots r \} \in \Sc$ and consider independent Brownian motions $B_{x_i,z_i}^{D_i}$ distributed according to $\P_{x_i,z_i}^{D_i}$ for all $i=1 \dots r$. Denote by $L_{x,\eps}^{(i)}$ their associated local times.
Let $a_i >0, i =1 \dots r,$ be thickness levels such that $a := \sum a_i <2$. We now consider the measure defined by: for all Borel set $A \subset \C$,
\[
\bigcap_{i=1}^{r} \Mc_{x_i,z_i;\eps}^{D_i,a_i} (A) := \abs{\log \eps}^r \eps^{-a} \int_A \prod_{i=1}^r \indic{ \frac{1}{\eps} L^{(i)}_{x,\eps} \geq 2 a_i \abs{\log \eps}^2} dx.
\]
Proposition \ref{prop:intersection} below studies the limit of these measures and Proposition \ref{prop:intersection_multipoint} studies the link between this limiting measure and $\Mc^{\Dc,a}_{\Xc,\Zc}$ introduced in Section \ref{subsec:Brownian_multiplicative_chaos}. These results are proven in Appendix \ref{app:intersection}.

\begin{proposition}\label{prop:intersection}
\begin{enumerate}[label=(\roman*)]
\item
Relative to the topology of weak convergence, the measure $\bigcap_{i=1}^r \Mc_{x_i,z_i;\eps}^{D_i,a_i}$ converges as $\eps \to 0$ towards a random finite Borel measure $\bigcap_{i=1}^r \Mc_{x_i,z_i}^{D_i,a_i}$ in probability.
\item
Inductive decomposition.
If $r \geq 2$, the sequence of random Borel measures
\begin{equation}
\label{eq:prop_intersection}
A \in \Bc(\C) \mapsto \abs{\log \eps} \eps^{-a_r} \int_A \indic{\frac{1}{\eps} L^{(r)}_{x,\eps} \geq 2 a_r \abs{\log \eps}^2} \bigcap_{i=1}^{r-1} \Mc_{x_i,z_i}^{D_i,a_i}(dx)
\end{equation}
converges as $\eps \to 0$ to $\bigcap_{i=1}^r \Mc_{x_i,z_i}^{D_i,a_i}$ in probability, relative to the topology of weak convergence.
\item
The measure $\bigcap_{i=1}^r \Mc_{x_i,z_i}^{D_i,a_i}$ is measurable with respect to $\sigma \left( \Mc_{x_i,z_i}^{D_i,a_i}, i =1 \dots r \right)$, the underlying sigma-algebra being the one associated to the topology of weak convergence.
\item
For all $A \in \Bc(\C)$,
\[
\Expect{ \bigcap_{i=1}^r \Mc_{x_i,z_i}^{D_i,a_i}(A) } = \int_A \prod_{i=1}^r \psi_{x_i,z_i}^{D_i,a_i}(x) dx.
\]
\item
With probability one, simultaneously for all Borel set $A$ of Hausdorff dimension strictly smaller than $2-\sum_{i=1}^r a_i$, $\bigcap_{i=1}^r \Mc_{x_i,z_i}^{D_i,a_i} (A) = 0$.
\item
The stochastic process
\[
(a_i)_{i = 1 \dots r} \in \{ (\alpha_i)_{i =1 \dots r} \in (0,2)^r: \sum \alpha_i < 2 \} \mapsto \bigcap_{i=1}^r \Mc_{x_i,z_i}^{D_i,a_i}
\]
taking values in the set of finite Borel measures, equipped with the topology of weak convergence, possesses a measurable modification.
\end{enumerate}
\end{proposition}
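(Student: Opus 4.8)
The plan is to transplant the strategy of \cite{jegoGMC} for the one-point measure $\Mc^{U,a}_{x_0,z}$ to the product of $r$ \emph{independent} trajectories: every moment computation will factorise over $i=1,\dots,r$, and the subcriticality hypothesis $\sum_i a_i<2$ is exactly what makes the $r$-fold product of the resulting singular kernels integrable. I would prove (i), (ii) and (v) together by induction on $r$, the base cases ($r=1$) being the corresponding one-point statements recalled in Section \ref{subsec:Brownian_multiplicative_chaos}; (iv) comes out of (i), while (iii) and (vi) are corollaries.

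For (i) and (iv), fix a ball $A$. By independence, $\Expect{\bigcap_{i=1}^r\Mc_{x_i,z_i;\eps}^{D_i,a_i}(A)}=\abs{\log\eps}^r\eps^{-a}\int_A\prod_{i=1}^r\P_{x_i,z_i}^{D_i}\big(\tfrac1\eps L^{(i)}_{x,\eps}\ge 2a_i\abs{\log\eps}^2\big)\,dx$, and the one-point first-moment asymptotics of \cite{jegoGMC} ($\abs{\log\eps}\eps^{-a_i}\P_{x_i,z_i}^{D_i}(\cdot)\to\psi^{D_i,a_i}_{x_i,z_i}(x)$ with a locally uniform integrable bound) give $\Expect{\bigcap_\eps(A)}\to\int_A\prod_i\psi^{D_i,a_i}_{x_i,z_i}(x)\,dx$. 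For the convergence itself I would run the truncated second moment scheme of \cite{jegoGMC}: for each trajectory introduce the same good event $\Gc^{(i)}_{x,\eps}$ (controlling the local-time profile across intermediate radii), check that $\Expect{\bigcap_\eps(A)-\bigcap^{\mathrm{good}}_\eps(A)}\to0$, and bound $\Expect{(\bigcap^{\mathrm{good}}_\eps(A))^2}=\abs{\log\eps}^{2r}\eps^{-2a}\iint_{A\times A}\prod_i\P(x,y\text{ thick and good for traj.\ }i)\,dx\,dy$ factor by factor using the one-point truncated two-point estimate; the product of the resulting kernels is integrable over $A\times A$ precisely because $\sum_i a_i<2$. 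This makes $(\bigcap^{\mathrm{good}}_{2^{-n}}(A))_n$ an $L^2$-Cauchy sequence, hence $(\bigcap_{2^{-n}}(A))_n$ Cauchy in probability; passing from dyadic $\eps$ to all $\eps$ and from a countable family of balls to weak convergence is done as in \cite{jegoGMC}, which proves (i), and the truncation supplies the uniform integrability needed to pass the first moment to the limit, giving (iv). The same truncated moment analysis yields fractional moment bounds $\Expect{\bigcap^{r}(Q)^{1+q}}\le C\abs{Q}^{\zeta_r(1+q)}$ on dyadic boxes $Q$ for small $q>0$, with $\zeta_r'(1^+)=2-\sum_i a_i>0$; a Borel--Cantelli argument over a dyadic grid then produces, for every $\kappa>0$, an a.s.\ bound $\bigcap^{r}(B(x,\delta))\le C(\omega)\,\delta^{2-\sum_i a_i-\kappa}$ uniform in $x,\delta$, whence (v) by a covering argument.

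For (ii), assume (i) and (v) hold at level $r-1$, and write $\nu_\eps:=\bigcap_{i=1}^{r-1}\Mc_{x_i,z_i;\eps}^{D_i,a_i}\to\nu:=\bigcap_{i=1}^{r-1}\Mc_{x_i,z_i}^{D_i,a_i}$ (in probability; $\nu$ independent of $B^{(r)}$). Since $\bigcap_{i=1}^r\Mc_{x_i,z_i;\eps}^{D_i,a_i}(A)=\abs{\log\eps}\eps^{-a_r}\int_A\indic{\frac1\eps L^{(r)}_{x,\eps}\ge2a_r\abs{\log\eps}^2}\,\nu_\eps(dx)$, which by (i) at level $r$ already converges to $\bigcap_{i=1}^r\Mc_{x_i,z_i}^{D_i,a_i}$, and the measure $\eta_\eps$ of \eqref{eq:prop_intersection} is obtained from it by replacing $\nu_\eps$ by its limit $\nu$ inside the integral, it suffices to show $\Expect{\abs{\eta_\eps(A)-\bigcap^r_\eps(A)}}\to0$. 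Conditioning on $(B^{(1)},\dots,B^{(r-1)})$ and using the first and truncated second moment estimates for the $r$-th trajectory, now integrated against $\nu$ and $\nu_\eps$ in place of Lebesgue measure, this reduces to the convergence $\nu_\eps\to\nu$ together with the negligibility of the mass the $r$-th trajectory's thick-and-good indicator puts near the diagonal; the latter is where subcriticality re-enters, as $\nu$ a.s.\ charges no set of Hausdorff dimension below $2-\sum_{i<r}a_i>a_r$, by (v) at level $r-1$. I expect this comparison step --- replacing the discretised intersection $\nu_\eps$ by the genuine limit $\nu$ without moving the limit --- to be the main technical obstacle.

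Finally, (iii): by (ii), $\bigcap_{i=1}^r\Mc_{x_i,z_i}^{D_i,a_i}$ is the in-probability limit of $\eta_\eps$, a functional of the local times of $B^{(r)}$ and of $\bigcap_{i=1}^{r-1}\Mc_{x_i,z_i}^{D_i,a_i}$ (which is $\sigma(\Mc_{x_i,z_i}^{D_i,a_i},\,i<r)$-measurable by induction); passing to a subsequence along which $\eta_\eps$, $\Mc_{x_r,z_r;\eps}^{D_r,a_r}$ and $\Mc_{x_r,z_r}^{D_r,a_r}$ converge almost surely and arguing as for the analogous measurability statements in \cite{jegoGMC,bass1994} identifies the limit as a measurable functional of $\Mc_{x_r,z_r}^{D_r,a_r}$ and $\bigcap_{i<r}\Mc_{x_i,z_i}^{D_i,a_i}$, which yields measurability with respect to $\sigma(\Mc_{x_i,z_i}^{D_i,a_i},\,i\le r)$. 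Part (vi) is standard once one observes that $\mathsf a\mapsto\bigcap^r_\eps$ is jointly measurable and, by moment estimates uniform on compact subsets of $\{\sum_i a_i<2\}$, continuous in probability in $\mathsf a$: the usual construction (an a.s.\ convergent diagonal subsequence over a countable dense set of parameters, extended by continuity in probability) provides a jointly measurable modification.
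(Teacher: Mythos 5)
Your proposal is in the right spirit (induction on $r$, good-event truncation, factorised moment bounds via independence), but it departs from the paper in three ways that are worth spelling out, one of which is a real gap.

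First, the order of (i) and (ii). You prove (i) at level $r$ directly via a product second-moment estimate for the $r$-fold truncated measure, then deduce (iv), then attack (ii) by comparing $\eta_\eps = \int f_r^\eps\,\nu(dx)$ to $\bigcap^r_\eps = \int f_r^\eps\,\nu_\eps(dx)$. The paper does the opposite: it proves (ii) first, by showing that $\int_A f_r^{\eta,\delta;\eps}(x)\,\Mc_\delta(dx)$ is $L^2$-Cauchy conditionally on the first $r-1$ trajectories, and only afterwards establishes (i) by a telescoping comparison $\prod_i f_i^\eps - \prod_i f_i^{\eps'} = \sum_k (f_k^\eps - f_k^{\eps'})\prod_{i\ne k} f_i^{\eps_i^k}$, estimated via Cauchy--Schwarz against the one-point Cauchy-in-$L^2$ estimate \eqref{eq:proof_prop_intersection4}. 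Both orderings can be made to work, but the paper's sequencing is what lets each step rest on a single clean moment computation.

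Second and more importantly, what the induction hypothesis must carry. You propose to carry the qualitative dimension statement (v), derived from fractional moments and a Borel--Cantelli argument. The paper instead carries the quantitative decomposition $\bigcap_{i<r}\Mc = \rho_\delta + \Mc_\delta$ with $\E[\rho_\delta(\C)]\to 0$ and a uniform energy bound $\E\bigl[\iint |x-y|^{-\alpha}\,\Mc_\delta(dx)\Mc_\delta(dy)\bigr]<\infty$ for $\alpha<2-\sum_{i<r}a_i$; statement (v) then falls out of Frostman's lemma. This is not a cosmetic difference. In your proof of (ii), the key step is to show $\E\bigl[\bigl|\int_A f_r^\eps\,(\nu_\eps-\nu)(dx)\bigr|\bigr]\to 0$; the object $\nu_\eps-\nu$ is a signed measure which does not converge in total variation, so to control its pairing with the wildly oscillating kernel $f_r^\eps$ you need an expectation-level two-point bound, precisely the energy estimate $\E[\Mc_\delta(dx)\Mc_\delta(dy)]\lesssim |x-y|^{-\alpha}\,dx\,dy$ together with \eqref{eq:proof_prop_intersection3}--\eqref{eq:proof_prop_intersection4}. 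The a.s.\ H\"older bound on balls you derive (conditionally on $B^{(<r)}$) implies a.s.\ finiteness of the energy integral, but to close the argument one still needs the unconditional moment bound, and your proposal leaves this to the reader precisely where the paper writes out the dominated-convergence step. Appealing to ``(v) at level $r-1$'' is therefore not enough as stated; you should strengthen your inductive hypothesis to the energy estimate (at which point (v) becomes a corollary and the fractional-moment/Borel--Cantelli detour is unnecessary).

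Finally, two minor points. For (iv), the paper derives it from the $L^1$-convergence in (ii) together with independence ($\E[f_r^\eps(x)]\E[\nu(dx)]$), using the inductive formula for $\E[\nu(dx)]$; your route via (i) plus uniform integrability works equally well. For (vi), your proposal of ``continuity in probability plus a diagonal subsequence'' is more machinery than needed: since $\mathsf a\mapsto\bigcap^r_\eps$ is manifestly jointly measurable for each $\eps$, and measurability is preserved under pointwise limits, (vi) is immediate from (i), which is the one-line argument the paper gives.
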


For the following proposition, we consider the measure $\Mc^{\Dc,a}_{\Xc,\Zc}$ built from the same Brownian motions as the ones used to defined the previous intersection measures.

\begin{proposition}[Disintegration]\label{prop:intersection_multipoint}
Let $a \in (0,2)$. If $r \geq 2$, then
\[
\Mc^{\Dc,a}_{\Xc,\Zc}
= \int_{\mathsf{a} \in E(a,r)} d \mathsf{a} \bigcap_{k=1}^r 
\Mc_{x_k,z_k}^{D_k,a_k}
\quad \quad \mathrm{a.s.}
\]
\end{proposition}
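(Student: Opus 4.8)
The plan is to work with the $\eps$-approximations and show that both sides of the identity arise as the limit in probability of the \emph{same} sequence of random measures. First, by Proposition~\ref{prop:def_measures} we have $\Mc^{\Dc,a}_{\Xc,\Zc;\eps}\to\Mc^{\Dc,a}_{\Xc,\Zc}$ in probability. For the right-hand side, for each fixed $\eps$ the map $\mathsf{a}\mapsto\bigcap_{k=1}^r\Mc^{D_k,a_k}_{x_k,z_k;\eps}(A)$ is an explicit piecewise-continuous function of the local times, bounded uniformly over the compact simplex $E(a,r)$; combining part~(i) of Proposition~\ref{prop:intersection} (convergence in probability for each fixed $\mathsf{a}$), the uniform-in-$\mathsf{a}$ moment estimates from Appendix~\ref{app:intersection} (in particular the first-moment formula of part~(iv)), and dominated convergence on $E(a,r)$, one gets $\int_{E(a,r)}d\mathsf{a}\,\bigcap_k\Mc^{D_k,a_k}_{x_k,z_k;\eps}(A)\to\int_{E(a,r)}d\mathsf{a}\,\bigcap_k\Mc^{D_k,a_k}_{x_k,z_k}(A)$ in probability for every Borel set $A$. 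Since both limits are a.s.\ finite Borel measures, it then suffices to prove, for every $A$ in a fixed countable generating $\pi$-system,
\[
\Mc^{\Dc,a}_{\Xc,\Zc;\eps}(A)-\int_{E(a,r)}d\mathsf{a}\,\bigcap_{k=1}^r\Mc^{D_k,a_k}_{x_k,z_k;\eps}(A)\ \xrightarrow[\eps\to0]{}\ 0\quad\text{in probability.}
\]

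To analyse this difference I would write $\alpha_i=\alpha_i(x,\eps):=L^{(i)}_{x,\eps}/(2\eps\abs{\log\eps}^2)$, so that the $dx$-density of $\Mc^{\Dc,a}_{\Xc,\Zc;\eps}$ is $\abs{\log\eps}\eps^{-a}\indic{\sum_i\alpha_i\geq a}\indic{\forall i,\ \alpha_i>0}$, while that of $\int_{E(a,r)}d\mathsf{a}\,\bigcap_k\Mc^{D_k,a_k}_{x_k,z_k;\eps}$ is $\abs{\log\eps}^r\eps^{-a}\,I(\alpha)$ with $I(\alpha):=\int_{E(a,r)}d\mathsf{a}\prod_i\indic{\alpha_i\geq a_i}$ (the $\eps$-power is $-a$ on both sides because $\sum_i a_i=a$ on $E(a,r)$). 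A change of variables $a_i=\alpha_i-c_i$ gives, with $\theta=\theta(x,\eps):=\sum_i\alpha_i-a$, that $I(\alpha)=\theta_+^{\,r-1}/(r-1)!$ when $0\leq\theta<\min_i\alpha_i$, that $I(\alpha)=0$ when $\theta<0$, and in all cases $0\leq I(\alpha)\leq\frac{1}{(r-1)!}\theta_+^{\,r-1}\indic{\forall i,\ \alpha_i>0}$, with equality outside the ``boundary layer'' $\{\forall i,\ \alpha_i>0,\ \theta\geq\min_i\alpha_i\}$ on which $\sum_i\alpha_i$ is bounded away from $a$ from above. So, up to the boundary-layer contribution (absorbed by the over-thick cut-off below), the difference to be controlled is
\[
\abs{\log\eps}\,\eps^{-a}\int_A\Big[\,\indic{\sum_i\alpha_i\geq a}-\frac{\abs{\log\eps}^{r-1}}{(r-1)!}\,\theta_+^{\,r-1}\,\Big]\indic{\forall i,\ \alpha_i>0}\,dx .
\]

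I would then split $A$ according to the overshoot $\theta$, with a scale $\delta_\eps\to0$, $\abs{\log\eps}\delta_\eps\to\infty$ (e.g.\ $\delta_\eps=\abs{\log\eps}^{-1/2}$). On $\{\theta\geq\delta_\eps\}$ everything is negligible: the thick-point first-moment estimate (a hitting estimate for the disc $D(x,\eps)$ plus the exponential decay of circle local time along excursions from the interior, i.e.\ the Green-function/excursion bounds behind Proposition~\ref{prop:def_measures}) gives $\Prob{\theta(x,\eps)\geq s}\leq C\abs{\log\eps}^{C}\eps^{\,a+s}$ uniformly in $x$ and $s\geq0$; after multiplying by $\abs{\log\eps}\eps^{-a}$, resp.\ $\abs{\log\eps}^r\eps^{-a}$, integrating over $A$ and against $\theta^{r-1}$, the $\{\theta\geq\delta_\eps\}$-contribution has expectation $O(\abs{\log\eps}^{C}\eps^{\delta_\eps})=O(\abs{\log\eps}^{C}e^{-\sqrt{\abs{\log\eps}}})\to0$. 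On $\{0\leq\theta<\delta_\eps\}$ I would use the key input — a quantitative version of the fact that the thick points of the multipoint measure are thick at level \emph{exactly} $a$: conditionally on $\{\theta(x,\eps)\geq0\}$, the rescaled overshoot $\abs{\log\eps}\,\theta(x,\eps)$ converges in law, uniformly in $x$, to an $\mathrm{Exp}(1)$ variable, with convergence of $(r-1)$-st moments. (Once a trajectory hits $D(x,\eps)$, its contribution to the circle local time is, up to $o(1)$, an exponential clock, and the overshoot past a fixed level of a sum of finitely many independent exponential-tailed contributions is asymptotically $\mathrm{Exp}(1)$; this is where the simplex volume $1/(r-1)!$ comes from, via $\int_0^\infty w^{r-1}e^{-w}dw=(r-1)!$.) Granting this,
\[
\abs{\log\eps}^{r}\,\Expect{\indic{\theta\geq0}\,\frac{\theta^{\,r-1}}{(r-1)!}}=\frac{\abs{\log\eps}}{(r-1)!}\,\Prob{\theta\geq0}\,\Expect{(\abs{\log\eps}\,\theta)^{r-1}\mid\theta\geq0}=(1+o(1))\,\abs{\log\eps}\,\Prob{\theta\geq0},
\]
which matches $\abs{\log\eps}\Prob{\theta\geq0}=\Expect{\abs{\log\eps}\,\indic{\sum_i\alpha_i\geq a}}$, the expectation of the first term's density; uniformity in $x$ lets one integrate over $A$. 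Altogether the expectation of the displayed difference tends to $0$.

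Finally, to promote this first-moment statement to convergence in probability I would rerun the estimates at the level of second moments: in the $L^2$ regime $a<1$, $\Expect{\big(\Mc^{\Dc,a}_{\Xc,\Zc;\eps}(A)-\int_{E(a,r)}d\mathsf{a}\,\bigcap_k\Mc^{D_k,a_k}_{x_k,z_k;\eps}(A)\big)^2}$ expands into an integral over $A\times A$; the two-point functions factorise over $i$ since $B^{(1)},\dots,B^{(r)}$ are independent, and the discrepancy is controlled at each base point by the two-point form of the above overshoot concentration, using the standard joint local-time moment bounds of \cite{jegoGMC}, so this second moment tends to $0$. For general $a\in[1,2)$ I would insert the ``good-event'' truncations already in place in Appendix~\ref{app:intersection} for Propositions~\ref{prop:def_measures} and~\ref{prop:intersection} — restricting both measures to base points around which no dyadic sub-scale is anomalously thick — run the $L^2$ argument on the truncated approximations, and send the truncation level to infinity, using that it discards a uniformly-in-$\eps$ vanishing amount of mass on both sides. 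Since $\Mc^{\Dc,a}_{\Xc,\Zc}$ and $\int_{E(a,r)}d\mathsf{a}\,\bigcap_k\Mc^{D_k,a_k}_{x_k,z_k}$ are a.s.\ finite and agree a.s.\ on a countable generating $\pi$-system, they coincide a.s. (An alternative would be induction on $r$ using the inductive decomposition of part~(ii) of Proposition~\ref{prop:intersection}, peeling off the $r$-th trajectory, but the same overshoot concentration resurfaces at each step.) The hard part will be exactly this overshoot concentration on $\{0\leq\theta<\delta_\eps\}$: the uniform (in the base point, and in its two-point version) convergence of $\abs{\log\eps}(\sum_i\alpha_i(x,\eps)-a)$ conditioned positive to an $\mathrm{Exp}(1)$ law with convergence of $(r-1)$-st moments; by contrast the over-thick cut-off and the passage beyond $L^2$ are routine given the machinery of Appendix~\ref{app:intersection}.
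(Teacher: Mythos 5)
Your heuristics are sound, but the argument currently hangs on a key lemma that you yourself flag as ``the hard part'' without proving: the claim that, conditionally on $\{\theta(x,\eps)\geq 0\}$, the rescaled overshoot $\abs{\log\eps}\,\theta(x,\eps)$ of $\sum_i\alpha_i$ past level $a$ converges in law to $\mathrm{Exp}(1)$ \emph{uniformly in $x$}, with convergence of $(r-1)$-st moments, together with its two-point counterpart for the $L^2$ step. None of the results in Appendix~\ref{app:intersection} give this as a black box. The only input the paper has about overshoots is Remark~\ref{rem:intersection} (via \cite[Proposition~6.1]{jegoGMC}): an $L^1$-convergence of the weighted deviation \emph{measures} $\bar\Mc^{D_i,a_i}_{x_i,z_i;\eps}(A\times T_i)$ — per trajectory, integrated against $dx$, in the square-root normalisation. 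Upgrading that to a pointwise conditional law for the \emph{sum} over $r$ trajectories, uniformly in $x$ (including the degenerating regimes near $\partial D_i$ and near the $x_i$), with moment convergence and a two-point version, is genuine new work; without it the heart of your proof is a placeholder. So there is a real gap.

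What the paper does instead avoids that lemma. Rather than comparing densities pointwise and analysing the overshoot of the sum, it discretises the thickness $\alpha$ of the first trajectory into windows of width $\eta/\abs{\log\eps}$ (with a secondary mesh of size $K$), and for each window applies Remark~\ref{rem:intersection} with $T_1$ a bounded interval and $T_2$ a half-line: the constrained measure converges in $L^1$ to $\Mc^{D_1,\alpha}_{x_1,z_1}\cap\Mc^{D_2,a-\alpha}_{x_2,z_2}(A)$ times an explicit exponential factor that is $\eta(1+o(1))$ uniformly in $\alpha,k$. Summing over windows, dominating by \eqref{eq:proof_th_intersection1}, and letting $\eta\to 0$, $K\to\infty$ produces the simplex integral. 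The exponential-overshoot phenomenon is thus used only in the weak, measure-valued $L^1$ form already supplied by \cite{jegoGMC}, so the ``uniform in $x$'' and ``moment convergence'' difficulties never arise — they are absorbed because everything is integrated against $dx$ and against $\prod_i\psi$ from the very start. Your change-of-variables identity $I(\alpha)=\theta_+^{\,r-1}/(r-1)!$ is correct and reappears implicitly in the paper as the Riemann sum that converges to $\int_{E(a,r)}d\mathsf{a}$; the cancellation of the $(r-1)!$ against $\E[(\mathrm{Exp}(1))^{r-1}]$ is the same mechanism as the paper's $\eta\to0$ limit. But as written, your proof replaces a readily available $L^1$ input by a stronger, unestablished pointwise/conditional input, and then defers the proof of exactly that input. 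To close the gap, either prove the overshoot concentration lemma (and its two-point version) in full, or reorganise the argument so that the only exponential-overshoot input required is the $L^1$-convergence of $\bar\Mc$.
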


Note that the integral of intersection measures above is well-defined thanks to Proposition \ref{prop:intersection}, Point \textit{(vi)}.

This result can be compared to the disintegration theorem in measure theory. In words, this proposition shows that the measure $\Mc^{\Dc,a}_{\Xc,\Zc}$ ``restricted to the event'' that, for all $k=1 \dots r$, the contribution of the $k$-th trajectory to the overall thickness $a$ is exactly $a_k$, agrees with the intersection measure $\bigcap_{k=1}^r \Mc_{x_k,z_k}^{D_k,a_k}$. With the standard disintegration theorem, one is able to make sense of the disintegrated measure for almost every $\mathsf{a} \in E(a,r)$. Here, the randomness of the measures helps us and we are able to make sense of these measures almost surely, simultaneously for all $\mathsf{a} \in E(a,r)$.

In view of Proposition \ref{prop:intersection_multipoint}, we can rewrite Property \ref{charac2} in the following way. Let $D' \subset D$ be two nice domains, $x_0 \in D'$ and $z \in \partial D$ be a nice point, then
\[
\Mc^{D,a}_{x_0,z} = \Mc_{x_0,Y}^{D',a} + \Mc_{Y,z}^{D,a} + \int_0^a \Mc_{x_0,Y}^{D',a-\alpha} \cap \Mc_{Y,z}^{D,\alpha} d \alpha
\]
with $Y = B_{\tau_{\partial D'}}$. A surprising consequence of Proposition \ref{prop:intersection_multipoint} is the following.

For all $x \in D'$, if we condition $x$ to be an $a$-thick point for the overall trajectory $(B_t, t \leq \tau_{\partial D})$ and if we condition the two small trajectories $(B_t, t \leq \tau_{\partial D'})$ and $(B_t, \tau_{\partial D'} \leq t \leq \tau_{\partial D})$ to visit $x$, then the thickness level of $x$ for one of the two small trajectories will be uniformly distributed in $(0,a)$. Proposition \ref{prop:intersection_multipoint} makes this statement formal. Indeed, by definition, this type of thick points is described by the measure
\[
\Mc_{(D',x_0,Y),(D,Y,z)}^{a} = \int_0^a \Mc_{x_0,Y}^{D',a-\alpha} \cap \Mc_{Y,z}^{D,\alpha} d \alpha
\]
where the equality follows from Proposition \ref{prop:intersection_multipoint}. On the right hand side, the thickness associated to each subtrajectory is fixed to $a-\alpha$ and $\alpha$ respectively where $\alpha$ is sampled uniformly in $[0,a]$.

\medskip

We finish this section by giving an intrinsic characterisation of the intersection measure $\bigcap_{i=1}^r \Mc_{x_i,z_i}^{D_i,a_i}$. The characterisation below is a simple extension of the characterisation of the multiplicative chaos associated to one Brownian trajectory, but it is nevertheless an important result since it allows one to quickly identify the measure.

The next result uses the notations introduced above Proposition \ref{prop:intersection}. In particular, recall that $B_{x_i,z_i}^{D_i}$ denotes the Brownian motion distributed according to $\P_{x_i,z_i}^{D_i}$ associated to $\bigcap_{i=1}^r \Mc_{x_i,z_i}^{D_i,a_i}$. For all $i = 1\dots r$, we view $B_{x_i,z_i}^{D_i}$ as a random element of the set $\Pc$ of càdlàg paths in $\R^2$ with finite durations. See Section \ref{sec:joint} for details, in particular concerning the topology associated to $\Pc$. The following proposition describes the law of the Brownian paths after shifting the probability measure by $\bigcap_{i=1}^r \Mc_{x_i,z_i}^{D_i,a_i}(dx)$ (the so-called rooted measure). As we will see, the resulting trajectories can be written as the concatenations of three independent pieces $B_{x_i,x}^{D_i} \wedge \Xi_x^{D_i,a_i} \wedge B_{x,z_i}^{D_i}$. The first one is a trajectory $B_{x_i,x}^{D_i}$ with law $\P_{x_i,x}^{D_i}$, i.e. a Brownian path conditioned to visit $x$ before exiting $D_i$. The second part $\Xi_x^{D_i,a_i}$ consists in the concatenation of infinitely many loops rooted at $x$ that are distributed according to a Poisson point process with intensity $a_i \nu_{D_i}(x,x)$. Here $\nu_{D_i}(x,x)$ is a measure on Brownian loops that stay in $D_i$ (see e.g. (2.12) in \cite{AidekonHuShi2018}). Finally, the last part of the trajectory is a Brownian motion $B_{x,z_i}^{D_i}$ distributed according $\P_{x,z_i}^{D_i}$, that is, a trajectory which starts at $x$ and which is conditioned to exit $D_i$ through $z_i$.

\begin{proposition}\label{prop:other_charac}
Let $F : \C \times \Pc^r \to \R$ be a bounded measurable function. Then
\begin{align}
\label{eq:rooted_measure}
\Expect{ \int_{\C} F(x, (B_{x_i,z_i}^{D_i})_{i = 1 \dots r}) \bigcap_{i=1}^r \Mc_{x_i,z_i}^{D_i,a_i}(dx) }
& = \int_{\cap_i D_i} \prod_{i=1}^r \psi_{x_i,z_i}^{D_i,a_i}(x) \\
\nonumber
& \times \Expect{ F(x, ( B_{x_i,x}^{D_i} \wedge \Xi_x^{D_i,a_i} \wedge B_{x,z_i}^{D_i} )_{i=1 \dots r} )} dx.
\end{align}
Moreover, if $\mu$ is another random Borel measure which is measurable w.r.t. $B_{x_i,z_i}^{D_i}, i =1 \dots r$, and which satisfies \eqref{eq:rooted_measure} for all bounded measurable function $F$, then $\mu = \bigcap_{i=1}^r \Mc_{x_i,z_i}^{D_i,a_i}$ almost surely.
\end{proposition}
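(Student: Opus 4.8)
The plan is to establish the identity \eqref{eq:rooted_measure} first for the approximating measures $\bigcap_{i=1}^r \Mc_{x_i,z_i;\eps}^{D_i,a_i}$ and then pass to the limit $\eps \to 0$, and to prove the uniqueness statement by a monotone-class/density argument once \eqref{eq:rooted_measure} is known. For the first part, I would fix a bounded continuous $F$ and compute $\Expect{ \int_{\C} F(x, (B_{x_i,z_i}^{D_i})_i) \bigcap_{i} \Mc_{x_i,z_i;\eps}^{D_i,a_i}(dx) }$ directly by Fubini: this equals $\abs{\log\eps}^r \eps^{-a} \int_{\C} \Expect{ F(x,(B_{x_i,z_i}^{D_i})_i) \prod_{i=1}^r \indic{L^{(i)}_{x,\eps} \geq 2a_i\abs{\log\eps}^2 \eps} } dx$, and since the Brownian motions are independent the expectation factorises into $\prod_i \Expect{ \indic{L^{(i)}_{x,\eps} \geq \cdots} G_i(B^{D_i}_{x_i,z_i}) }$ for a generic factorised test function; the general case follows by density. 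The key computation is then the single-trajectory statement: for one Brownian motion $B$ under $\P^D_{x_0,z}$, the measure $\abs{\log\eps}\eps^{-a}\Expect{\indic{L_{x,\eps}\geq 2a\abs{\log\eps}^2\eps}\,\indic{B\in\cdot}}$, as $\eps\to 0$, converges to $\psi^{D,a}_{x_0,z}(x)$ times the law of the concatenation $B^D_{x_0,x}\wedge\Xi^{D,a}_x\wedge B^D_{x,z}$. This is exactly the content of the rooted-measure description underlying the one-point construction in \cite{jegoGMC, AidekonHuShi2018} (and the decomposition of the path at the circle $\partial D(x,\eps)$ into an entrance excursion, a collection of excursions from the circle to itself, and an exit excursion, which in the $\eps\to 0$ limit become $B^D_{x_0,x}$, the Poissonian loop soup $\Xi^{D,a}_x$, and $B^D_{x,z}$ respectively); I would invoke it as essentially known, taking care that the normalisation $\abs{\log\eps}\eps^{-a}$ matches the constant in $\psi$ and that $\CR(x,D)^a$ arises from the probability that $L_{x,\eps}$ exceeds the threshold.

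Next I would handle the interchange of limit and expectation. Proposition \ref{prop:intersection}, Point \textit{(i)} gives convergence of $\bigcap_{i}\Mc^{D_i,a_i}_{x_i,z_i;\eps}$ in probability (weak topology), so $\int F(x,\cdot)\bigcap_i\Mc^{D_i,a_i}_{x_i,z_i;\eps}(dx)\to\int F(x,\cdot)\bigcap_i\Mc^{D_i,a_i}_{x_i,z_i}(dx)$ in probability for continuous bounded $F$; to upgrade to convergence of expectations I need uniform integrability of $\int \norme{F}_\infty\,\bigcap_i\Mc^{D_i,a_i}_{x_i,z_i;\eps}(\C)$, which I would get from an $L^2$ (or $L^{1+\delta}$) bound on the total mass of the approximating intersection measures — such a bound is implicitly present in the proof of Proposition \ref{prop:intersection} in Appendix \ref{app:intersection} and I would cite it. On the right-hand side, the $\eps\to 0$ limit of the explicit expression is straightforward: $\abs{\log\eps}^r\eps^{-a}\prod_i\Prob{L^{(i)}_{x,\eps}\geq 2a_i\abs{\log\eps}^2\eps}\to\prod_i\psi^{D_i,a_i}_{x_i,z_i}(x)$ pointwise in $x$, the conditional laws of each trajectory given the thick-point event converge to the concatenation law by the single-trajectory statement, and dominated convergence finishes it (uniform integrability in $x$ following from the expectation formula in Proposition \ref{prop:intersection}\textit{(iv)} already being finite and integrable).

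For the uniqueness part, suppose $\mu$ is measurable with respect to $(B^{D_i}_{x_i,z_i})_{i=1\dots r}$ and satisfies \eqref{eq:rooted_measure} for all bounded measurable $F$. Taking $F(x,\omega)=f(x)g(\omega)$ with $f$ continuous and $g$ bounded measurable, \eqref{eq:rooted_measure} determines $\Expect{g((B^{D_i}_{x_i,z_i})_i)\int f\,d\mu}$ for all such $f,g$; since $\mu$ is $\sigma((B^{D_i}_{x_i,z_i})_i)$-measurable, this pins down $\Expect{\int f\,d\mu \mid (B^{D_i}_{x_i,z_i})_i}$ as an explicit functional of the paths, hence pins down $\int f\,d\mu$ almost surely, and then a countable-dense family of $f$'s together with the non-atomicity (or simply separability of $C_b$) forces $\mu=\bigcap_i\Mc^{D_i,a_i}_{x_i,z_i}$ a.s. as measures. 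I expect the main obstacle to be the first, ``main computation'' step: carefully justifying the $\eps\to 0$ path decomposition — i.e. that after rooting at a thick point the entrance/loop/exit pieces decouple and converge to $B^D_{x_0,x}\wedge\Xi^{D,a}_x\wedge B^D_{x,z}$ with the right intensity $a\,\nu_D(x,x)$ — since this is where all the probabilistic content sits; everything else is Fubini, dominated convergence backed by $L^{1+\delta}$ moment bounds from Appendix \ref{app:intersection}, and a routine monotone-class argument. I would lean heavily on the corresponding one-trajectory result from \cite{jegoGMC} and \cite{AidekonHuShi2018} to keep this step short, the point being that the multipoint case adds only an independent product over $i$ and an outer integral over $x$.
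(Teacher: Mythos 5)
Your proposal is correct and follows essentially the same route as the paper: prove \eqref{eq:rooted_measure} at the $\eps$-level via Fubini and independence, pass to the limit using the convergence from Proposition \ref{prop:intersection} (note Remark \ref{rem:intersection} already gives $L^1$-convergence, so you can bypass the extra $L^{1+\delta}$ uniform-integrability step), and invoke the single-trajectory conditional-law convergence to the concatenation $B^{D_i}_{x_i,x}\wedge\Xi^{D_i,a_i}_x\wedge B^{D_i}_{x,z_i}$ — which the paper attributes to Proposition 5.1 of \cite{bass1994} rather than to \cite{jegoGMC,AidekonHuShi2018}, though these are the same underlying result. The uniqueness argument you sketch (conditioning on $\sigma(B^{D_i}_{x_i,z_i})$ and using a countable dense family of test functions) is exactly the intended one, which the paper leaves implicit by pointing to \cite[Proposition 6.2]{jegoGMC}.
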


As already alluded to, this type of characterisation is of little help when one wants to establish scaling limit results since it relies on the measurability of the underlying Brownian trajectories.

Finally, we mention that,
using Proposition \ref{prop:intersection_multipoint} and Proposition \ref{prop:other_charac} above, one can also compute the left hand side of \eqref{eq:rooted_measure} where the intersection measure has been replaced by the multipoint measure $\Mc_{\Xc,\Zc}^{\Dc,a}$.
Therefore, a similar characterisation concerning the multipoint measure could also be stated.

\subsection{Outline of proofs}\label{subsec:organisation}

We now present the organisation of the paper and explain the main ideas behind the proofs of Theorems \ref{th:convergence} and \ref{th:charac}.

Section \ref{sec:characterisation} is devoted to the proof of Theorem \ref{th:charac}. It will start by proving that Brownian multiplicative chaos satisfies Properties \ref{charac1}-\ref{charac3} assuming Propositions \ref{prop:def_measures}, \ref{prop:intersection} and \ref{prop:intersection_multipoint} on the multipoint version of Brownian multiplicative chaos. These propositions will be proven in Appendix \ref{app:intersection}. The rest of Section \ref{sec:characterisation} will deal with the uniqueness part of Theorem \ref{th:charac} and we now sketch its proof.
Let
$\left (\mu^{\Dc,a}_{\Xc,\Zc}, \Dc\Xc\Zc \in \Sc \right)$
be a process of Borel measures satisfying Properties \ref{charac1}-\ref{charac3}.
Let $D$ be a nice domain, $x_0 \in D$ and a nice point $z \in \partial D$. We are going to explain the characterisation of the law of $\mu^{D,a}_{x_0,z}$. The characterisation of the law of more general marginals follows along the same lines. The only extra difficulty lies in the notations.
We will start by noticing that Property \ref{charac3} implies that we can find a deterministic direction such that almost surely all the lines parallel to this direction are not seen by the measure $\mu^{D,a}_{x_0,z}$. Without loss of generality, assume that this direction is the vertical one (straightforward adaptations would need to be made in the case of a general direction).
We will slice the domain $D$ into many narrow rectangle-type domains $D \cap (q2^{-p}, (q+2)2^{-p}) \times \R$, $q \in \Z$.
By iterating Property \ref{charac2}, we will be able to decompose
\[
\mu^{D,a}_{x_0,z}
\overset{\mathrm{(d)}}{=}
\sum_{\Dc\Xc\Zc \subset \{ (D^p_i,x^p_i,x^p_{i+1}), i \leq I_p-1\} } \mu^{\Dc,a}_{\Xc,\Zc}.
\]
$D_i^p$ will be a narrow rectangle as above centred at $x_i^p$ and $x_i^p,i \geq 1,$ will correspond to the successive hitting points of $2^{-p}\Z \times \R$ of a Brownian trajectory.
See \eqref{eq:proof_thm_charac} for precise definitions and Figure \ref{fig1} for an illustration of these successive hitting points. The idea is then that most of the randomness comes from the points $x_i^p, i \geq 1$, and we do not change the measure so much by replacing each term
\[
\mu^{\Dc,a}_{\Xc,\Zc}
\mathrm{~by~}
\Expect{ \left. \mu^{\Dc,a}_{\Xc,\Zc} \right\vert x_i^p, p \geq 1}.
\]
This latter expression is entirely determined by Property \ref{charac1} and does not depend on the process
$\left (\mu^{\Dc,a}_{\Xc,\Zc}, \Dc\Xc\Zc \in \Sc \right)$
any more. This conditional expectation encodes a lot of information. For instance, it ensures the measure to be concentrated around the Brownian trajectory. In fact, it provides a martingale approximation of the measure $\mu^{D,a}_{x_0,z}$ as we will see in Lemma \ref{lem:charac}.
The proof will then consist in showing that the error in the above approximation tends to zero when $p \to \infty$. The fact that almost surely $\mu^{D,a}_{x_0,z}$ gives zero-mass to any vertical line will be useful for this purpose making sure that we decomposed the initial measure into many small pieces.

\begin{figure}[ht]
   \centering
    \def\svgwidth{0.5\columnwidth}
   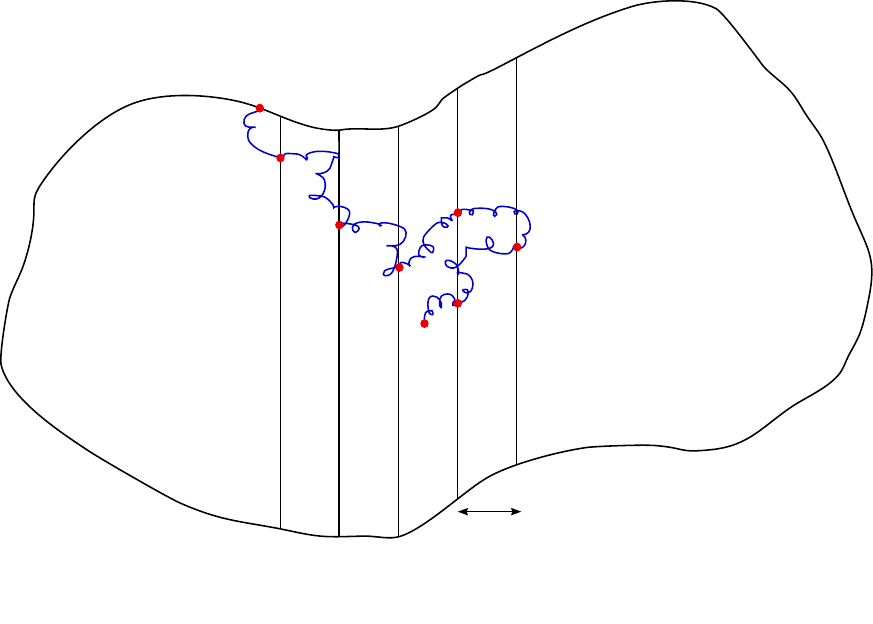
   \caption{Representation of the successive hitting points of $2^{-p} \Z \times \R$ by a Brownian motion sampled according to $\P_{x_0,z}^D$.}\label{fig1}
\end{figure}

\medskip

We now turn to the random walk part.
We will first show the convergence of $\mu^{U,a}_{x_0,z;N}$. The convergence of the unconditioned measures $\mu^{U,a}_{x_0;N}$ will then follow fairly quickly thanks to the weak convergence of the discrete Poisson kernel.
To show the convergence of $\mu^{U,a}_{x_0,z;N}$, the overall strategy is simple: we will prove that this sequence is tight and we will then identify the subsequential limits. The tightness is the easy part and relies on a first moment computation. Section \ref{sec:tight} is devoted to it. The identification of the subsequential limits uses Theorem \ref{th:charac} and is done in Section \ref{sec:identification}.
We sketch the main steps of this identification.
Let $x_* \in U$ and $z_* \in \partial U$ be a nice point. Let $(N_k,k \geq 1)$ be an increasing sequence of integers so that $(\mu^{U,a}_{x_*,z_*;N_k},k \geq 1)$ converges. In Lemma \ref{lem:uncountable_extraction}, we will show that we can extract a further subsequence $(N_k',k \geq 1)$ of $(N_k,k \geq 1)$ such that
for all $\Dc'\Xc'\Zc' \in \Sc$,
\[
\left( \mu_{\Xc,\Zc;N_k'}^{\Dc,a}, \Dc\Xc\Zc \subset \Dc'\Xc'\Zc' \right)
\]
converges. The above measures are the discrete analogue of the multipoint versions of Brownian multiplicative chaos and are defined in \eqref{eq:def_multipoint_discrete}. We denote by 
$ (\mu^{\Dc,a}_{\Xc,\Zc}, \Dc\Xc\Zc \in \Sc) $
the limiting process of finite Borel measures. Showing that we can extract such a subsequence requires some work since we consider an uncountable number of sequences. Thanks to Theorem \ref{th:charac}, to conclude the identification of the limiting measure $\mu^{U,a}_{x_*,z_*}$, it is then enough to show that the process
$ (\mu^{\Dc,a}_{\Xc,\Zc}, \Dc\Xc\Zc \in \Sc) $
satisfies Properties \ref{charac1}-\ref{charac3}. This will roughly follow along the same lines as in the Brownian case. In particular, the uniform integrabilitiy of $\mu_{\Xc,\Zc;N}^{\Dc,a}(\Z^2)$, $N \geq 1$, which is the content of Proposition \ref{prop:uniform_integrability} is key. This comes from a careful truncated second moment estimate which is similar to what was done in \cite{jegoGMC}. The proof of Proposition \ref{prop:uniform_integrability} is written in Section \ref{sec:uniform integrability}.

\subsection{Some notations}\label{subsec:notations}

We finish this introduction with some notations that will be used throughout the paper.
Let $D \subset \C$ be a nice domain. For $x \in D$ and a nice point $z \in \partial D$, we will denote by $\CR(x,D)$ the conformal radius of $D$ seen from $x$, $G^D$ the Green function of $D$ with zero boundary conditions normalised so that $G^D(x,y) \sim - \log \abs{x-y}$ as $\abs{x-y} \to 0$ and $H^D(x,z)dz = \PROB{x}{B_{\tau_{\partial D}} \in dz}$ the Poisson kernel or harmonic measure of $D$. These three quantities can be expressed in terms of a conformal map $f_D : D \to \D$ onto the unit disc (see e.g. \cite[Chapter 2]{LawlerBook05}): for all $x, y \in D$ and for all nice point $z \in \partial D$,
\begin{align}
\label{eq:conformal_radius}
\CR(x,D) & = \frac{1-\abs{f_D(x)}^2}{\abs{f_D'(x)}}, \\
\label{eq:Green_conformal}
G^D(x,y) & = \log \frac{\abs{1 - f_D(x) \overline{f_D(y)}}}{\abs{f_D(y) - f_D(x)}},\\
\label{eq:Poisson_conformal}
H^D(x,z) & = \abs{f_D'(z)} \frac{1-\abs{f_D(x)}^2}{2\pi\abs{f_D(x) - f_D(z)}^2}.
\end{align}
With the notations of Section \ref{subsec:RW}, we will similarly denote by $G^{D_N}$ and $H^{D_N}$ the discrete Green's function and Poisson kernel defined by: for all $x,y \in \Z^2$,
\begin{equation}
\label{eq:def_Green_Poisson}
G^{D_N}(x,y) := \EXPECT{x}{\ell_y^{\tau_{\partial D_N}}}
\mathrm{~and~}
H^{D_N}(x,y) := \prob^{D_N}_x \left( X_{\tau_{\partial D_N}} = y \right).
\end{equation}

In the rest of paper, $a \in (0,2)$ will always denote the thickness level that we look at.

\section{Characterisation: proof of Theorem \ref{th:charac}}\label{sec:characterisation}

We start by proving that Brownian multiplicative chaos satisfies Properties \ref{charac1}-\ref{charac3}.

\begin{proof}[Proof of Theorem \ref{th:charac}, existence]
Property \ref{charac1}, resp. \ref{charac3}, is a direct consequence of Proposition \ref{prop:intersection} \textit{(iv)}, resp. \textit{(v)}, and Proposition \ref{prop:intersection_multipoint}. Property \ref{charac2bis} follows from the fact that we consider independent Brownian motions.

We now prove Property \ref{charac2}. To ease notations, we will only prove this in the simplest case $\Dc\Xc\Zc=\{(D,x_0,z)\}$. The general case follows along the same lines. Let $D'$ be a nice subset of $D$ containing $x_0$. Let $B$ be a Brownian motion under $\prob_{x_0,z}^D$, $L_{x,\eps}$ its associated local times and let $L_{x,\eps}^{(0)}$ be the local times of $B$ stopped at the first exit time of $D'$ and $L^{(1)}_{x,\eps} := L_{x,\eps} - L_{x,\eps}^{(0)}$. We can write
\begin{align}
\label{eq:proof_P2}
\Mc_{x_0,z;\eps}^{D,a}(dx) & = \abs{\log \eps} \eps^{-a} \indic{\frac{1}{\eps}L_{x,\eps} \geq 2a |\log \eps|^2} dx
 = \abs{\log \eps} \eps^{-a} \left( \indic{\frac{1}{\eps}L_{x,\eps}^{(0)} \geq 2a |\log \eps|^2} \indic{L_{x,\eps}^{(1)} =0} \right. \\
 & ~~~~~~ \left. + \indic{\frac{1}{\eps}L_{x,\eps}^{(1)} \geq 2a |\log \eps|^2} \indic{L_{x,\eps}^{(0)} =0} + \indic{\frac{1}{\eps} \left( L_{x,\eps}^{(0)} + L_{x,\eps}^{(1)} \right) \geq 2a |\log \eps|^2, L_{x,\eps}^{(0)}>0, L_{x,\eps}^{(1)}>0}  \right) dx. \nonumber
\end{align}
If we denote by $Y$ the first hitting point of $\partial D'$ of the Brownian trajectory $B$, Proposition \ref{prop:def_measures} shows that the last term on the right hand side converges in probability towards $\Mc_{(D',x_0,Y),(D,Y,z)}^a$. We are now going to argue that the first right hand side term converges in probability towards $\Mc_{x_0,Y}^{D',a}$. Indeed, for all Borel set $A \subset \C$,
\begin{align*}
& \Expect{ \abs{ \Mc_{x_0,Y;\eps}^{D',a}(A) - |\log \eps| \eps^{-a} \int_A \indic{\frac{1}{\eps}L_{x,\eps}^{(0)} \geq 2a |\log \eps|^2} \indic{L_{x,\eps}^{(1)} =0} dx } } \\
& = |\log \eps| \eps^{-a} \int_A \prob_{x_0,z}^D \left( \frac{1}{\eps}L_{x,\eps}^{(0)} \geq 2a |\log \eps|^2, L_{x,\eps}^{(1)} >0 \right) dx.
\end{align*}
We can dominate
\[
\sup_\eps |\log \eps| \eps^{-a} \prob_{x_0,z}^D \left( \frac{1}{\eps}L_{x,\eps}^{(0)} \geq 2a |\log \eps|^2, L_{x,\eps}^{(1)} >0 \right)
\leq \sup_\eps |\log \eps| \eps^{-a} \prob_{x_0,z}^D \left( \frac{1}{\eps}L_{x,\eps} \geq 2a |\log \eps|^2 \right)
\]
which is integrable (see \eqref{eq:proof_th_intersection1}). Moreover, for all $x \notin \partial D'$,
\begin{align*}
& |\log \eps| \eps^{-a} \prob_{x_0,z}^D \left( \frac{1}{\eps}L_{x,\eps}^{(0)} \geq 2a |\log \eps|^2, L_{x,\eps}^{(1)} >0 \right) \\
& = \expect_{x_0,z}^D \left[ |\log \eps| \eps^{-a} \prob_{x_0,Y}^{D'} \left( \frac{1}{\eps}L_{x,\eps}^{(0)} \geq 2a |\log \eps|^2 \right) \prob_{Y,z}^D \left( L_{x,\eps}^{(1)} >0 \right) \right]
\end{align*}
tends to zero as $\eps \to 0$ (which is again a consequence of \eqref{eq:proof_th_intersection1}). By dominated convergence theorem, it implies that
\[
\Expect{ \abs{ \Mc_{x_0,Y;\eps}^{D',a}(A) - |\log \eps| \eps^{-a} \int_A \indic{\frac{1}{\eps}L_{x,\eps}^{(0)} \geq 2a |\log \eps|^2} \indic{L_{x,\eps}^{(1)} =0} dx } }
\]
tends to zero as $\eps \to 0$.
Since $\Mc_{x_0,Y;\eps}^{D'a}$ converges in probability towards $\Mc_{x_0,Y}^{D'a}$ (Proposition \ref{prop:def_measures}), this shows that
\[
\abs{\log \eps} \eps^{-a} \indic{\frac{1}{\eps}L_{x,\eps}^{(0)} \geq 2a |\log \eps|^2} \indic{L_{x,\eps}^{(1)} =0} dx
\]
converges in probability to the same limiting measure. Similarly, the second right hand side term of \eqref{eq:proof_P2} converges in probability towards $\Mc_{Y,z}^{D,a}$ which overall yields
\[
\Mc^{D,a}_{x_0,z} = \Mc_{x_0,Y}^{D'a} + \Mc_{Y,z}^{D,a} + \Mc_{(D',x_0,Y),(D,Y,z)}^a.
\]
This is Property \ref{charac2} and it completes the proof.
\end{proof}

The rest of this section is devoted to the uniqueness part of Theorem \ref{th:charac}.

\begin{proof}[Proof of Theorem \ref{th:charac}, uniqueness]
Let
$\left (\mu^{\Dc,a}_{\Xc,\Zc}, \Dc\Xc\Zc \in \Sc \right)$
be a process satisfying Properties \ref{charac1}-\ref{charac3}. Let $D$ be a nice domain, $x_0 \in D$ and $z \in \partial D$ be a nice point. We are going to identify the law of $\mu^{D,a}_{x_0,z}$. As mentioned in Section \ref{subsec:organisation}, the identification of more general marginals follows along the same lines. The only extra difficulty lies in the notations. We start this proof by noticing that we can find a deterministic angle $\theta \in \R$ such that all the lines with angle $\theta$ are not seen by the measure $\mu^{D,a}_{x_0,z}$. Here and in the following, we say that the angle of a line $L$ is $\theta$ if we can write $L = x + e^{i\theta} (\{0\} \times \R)$ for some $x \in \C$.

\begin{lemma}\label{lem:charac3old}
There exists an angle $\theta \in \R$ such that for all $\eps>0$,
\[
\lim_{p \to \infty}
\# \left\{ q \in \Z: \mu^{D,a}_{x_0,z} \left( e^{i \theta} \left( 2^{-p} (q + (0,1]) \times \R \right) \right) \geq \eps \right\} = 0 \quad \quad \mathrm{a.s.}
\]
\end{lemma}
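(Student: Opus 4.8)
The plan is to deduce this from the non-atomicity Property \ref{charac3}, which says that, almost surely, the random measure $\mu := \mu^{D,a}_{x_0,z}$ charges no single point. The quantity in the statement is, for a given angle $\theta$, governed by the pushforward $\nu_\theta$ of $\mu$ under the orthogonal projection onto the direction $e^{i\theta}$: the strip $e^{i\theta}(2^{-p}(q+(0,1])\times\R)$ has $\mu$-mass equal to the $\nu_\theta$-mass of the dyadic interval $2^{-p}(q+(0,1])$, and a line of angle $\theta$ has $\mu$-mass equal to a $\nu_\theta$-atom. So the plan has two parts: first, find a deterministic $\theta$ for which $\nu_\theta$ is almost surely atomless; second, observe that for an atomless finite measure on $\R$ the dyadic-strip count in the statement vanishes as $p\to\infty$.

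For the second (and easier) part I would record the elementary deterministic fact that a finite atomless Borel measure $\nu$ on $\R$ satisfies $\sup_{q\in\Z}\nu(2^{-p}(q+(0,1]))\to 0$ as $p\to\infty$. Indeed, if this failed there would be $\delta>0$ and dyadic intervals $I_p$ of length $2^{-p}$ with $\nu(I_p)\ge\delta$; if the left endpoints of the $I_p$ stay in a bounded region along a subsequence, they subconverge to some $t$, forcing $\nu(\{t\})\ge\delta$, and otherwise they tend to infinity in absolute value, forcing $\nu(\R\setminus[-M,M])\ge\delta$ for every $M$, which contradicts finiteness. Since the count in the statement is integer-valued, $\sup_q\nu_\theta(2^{-p}(q+(0,1]))<\eps$ for large $p$ forces that count to be exactly $0$ for large $p$, which is even stronger than the stated limit.

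The first part is where the real content lies. Fix a realization for which $\mu$ is atomless (almost sure by Property \ref{charac3}). The key observation is that the set of ``bad'' angles — those $\theta$ for which some line of angle $\theta$ has positive $\mu$-mass — is then at most countable: if uncountably many angles were bad, then for some fixed $\delta>0$ uncountably many angles would each carry a line of $\mu$-mass at least $\delta$; but lines with distinct angles meet in at most one point, which has $\mu$-mass $0$ by atomicity, so by Bonferroni's inequality finitely many such lines would already carry total mass exceeding $\mu(\C)$, a contradiction. Having this, I would run a Fubini argument over the angle variable: the bad-angle set, viewed as a function of the realization, is almost surely countable and hence Lebesgue-null, so $\int_0^\pi\Prob{\theta\text{ is bad}}\,d\theta=\Expect{\mathrm{Leb}(\text{bad angles})}=0$, whence $\Prob{\theta\text{ is bad}}=0$ for Lebesgue-almost every $\theta$. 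Any such $\theta$ works: for it, $\nu_\theta$ is almost surely atomless, and the second part concludes.

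The main obstacle is conceptual rather than technical: non-atomicity of the two-dimensional measure $\mu$ does \emph{not} on its own imply that $\mu$ gives zero mass to every line of a prescribed slope — a priori it could give positive mass to some line in \emph{every} direction (imagine mass spread along segments realising all possible slopes). The Bonferroni-plus-Fubini step is exactly what upgrades ``atomless'' to ``atomless transverse projection along a deterministic full-measure set of directions''. The one thing to check carefully is the joint measurability in $(\omega,\theta)$ needed for Fubini; this becomes routine once one expresses ``$\nu_\theta$ has an atom'' via the dyadic characterisation from the second part, namely $\inf_p\sup_q\nu_\theta(2^{-p}(q+(0,1]))>0$, which is a countable combination of measurable functionals of $\mu$.
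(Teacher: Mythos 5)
Your proof is correct, and it takes a genuinely different route from the paper's. The paper argues by contradiction: assuming no $\theta$ works, it uses the uncountability of $[0,\pi)$ to extract an $\eta>0$ and infinitely many angles $\theta_k$ for which the event $E_{\theta_k}$ (that some line of angle $\theta_k$ carries $\mu$-mass at least $\eta$) has probability at least $\eta$; it then invokes Paley--Zygmund, a correlation-insensitive second-moment tool, to conclude that with positive probability infinitely many $E_{\theta_k}$ occur simultaneously, and finally derives a contradiction from $\sum_k\mu(L_{\theta_k})=\mu\bigl(\bigcup_kL_{\theta_k}\bigr)\le\mu(\C)<\infty$. You instead show directly, on the almost-sure event that $\mu$ is atomless (Property \ref{charac3}) and $\mu(\C)<\infty$, that at most $\mu(\C)/\delta$ angles can support a line of mass at least $\delta$ (since lines of distinct slopes meet in a single null point), so the bad-angle set is countable; Fubini over the angle variable then produces a deterministic Lebesgue-full set of good $\theta$, from which you pick one. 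Both proofs pivot on the same geometric observation that lines of distinct slopes overlap only in a $\mu$-null point, but your argument replaces the contradiction-plus-Paley--Zygmund machinery with a deterministic counting bound and Fubini (in fact, the Paley--Zygmund step in the paper can even be shortcut by the very bound $\sum_k\mathbf{1}_{E_{\theta_k}}\le\mu(\C)/\eta$, so your approach is arguably the more economical one), and it makes explicit the elementary reduction from ``atomless transverse projection'' to vanishing dyadic-strip counts, which the paper asserts without proof in the sentence passing from the $\limsup$ of counts to the existence of a heavy line $L_\theta$. The extra cost of your route is the joint measurability of $(\omega,\theta)\mapsto\indic{\nu_\theta\text{ has an atom}}$ needed for Fubini; you correctly flag this and correctly dispatch it via the dyadic characterisation $\inf_p\sup_q\nu_\theta\bigl(2^{-p}(q+(0,1])\bigr)>0$.
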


\begin{proof}
We proceed by contradiction. Assume that for all $\theta \in \R$, there exists $\eps_\theta >0$ such that the event $E_\theta$ that
\[
\limsup_{p \to \infty}
\# \left\{ q \in \Z: \mu^{D,a}_{x_0,z} \left( e^{i \theta} \left( 2^{-p} (q + (0,1]) \times \R \right) \right) \geq \eps_\theta \right\} \geq 1
\]
holds with positive probability $p_\theta$. We first argue that on the event $E_\theta$, there exists a line $L_\theta$ with angle $\theta$ such that $\mu^{D,a}_{x_0,z}(L_\theta) \geq \eps_\theta$.
Indeed, on the event $E_\theta$, there exists an increasing sequence of integers $(p_n)_{n \geq 1}$ and a sequence $(q_n)_{n \geq 1} \subset \Z$ such that $\mu^{D,a}_{x_0,z} \left( e^{i \theta} \left( 2^{-p_n} (q_n + (0,1]) \times \R \right) \right) \geq \eps_\theta$ for all $n \geq 1$.
Moreover, because the total mass of $\mu^{D,a}_{x_0,z}$ is almost surely finite, we can extract a subsequence to ensure that $e^{i \theta} \left( 2^{-p_n} (q_n + (0,1]) \times \R \right), n \geq 1,$ is a decreasing sequence of sets. The intersection of those sets is a line $L_\theta$ with angle $\theta$ satisfying the desired property that $\mu^{D,a}_{x_0,z}(L_\theta) \geq \eps_\theta$.

Now, since $[0,\pi)$ is uncountable, there exists $\eta >0$ such that $\{ \theta \in [0,\pi): p_\theta > \eta, \eps_\theta > \eta \}$ is infinite. Let $\{\theta_k, k \geq 1\}$ be a subset of this set. For all $k \geq 1$, we have by the Paley-Zygmund inequality
\begin{align*}
\Prob{\sum_{n \geq 1} \mathbf{1}_{E_{\theta_n}} \geq \frac{\eta}{2} k}
& \geq \Prob{\sum_{1 \leq n \leq k} \mathbf{1}_{E_{\theta_n}} \geq \frac{1}{2} \Expect{\sum_{1 \leq n \leq k} \mathbf{1}_{E_{\theta_n}}} } \\
& \geq \frac{1}{4} \frac{\Expect{\sum_{1 \leq n \leq k} \mathbf{1}_{E_{\theta_n}}}^2}{\Expect{\left(\sum_{1 \leq n \leq k} \mathbf{1}_{E_{\theta_n}} \right)^2} }
\geq \frac{\eta^2}{4}.
\end{align*}
Hence the probability that an infinite number of events $E_{\theta_k}, k \geq 1$, occur is positive. On this event, we have
\[
\sum_{k \geq 1} \mu^{D,a}_{x_0,z}(L_{\theta_k}) \geq \eta \sum_{k \geq 1} \mathbf{1}_{E_{\theta_k}} = \infty.
\]
But because $\mu^{D,a}_{x_0,z}$ is non-atomic (Property \ref{charac3}), we almost surely have
\[
\sum_{k \geq 1} \mu^{D,a}_{x_0,z}(L_{\theta_k}) = \mu^{D,a}_{x_0,z} \left( \bigcup_{k \geq 1} L_{\theta_k} \right) \leq \mu^{D,a}_{x_0,z}(\C)
\]
which is almost surely finite (Property \ref{charac1} implies that it has a finite first moment). We have obtained an absurdity which concludes the proof.
\end{proof}

This result will be used at the very end of the proof; see \eqref{eq:lem_used}. Roughly speaking, in the course of the proof we will decompose the measure into small pieces and Lemma \ref{lem:charac3old} ensures that these pieces are indeed small.

Without loss of generality, we will assume that the specific angle $\theta$ provided by Lemma \ref{lem:charac3old} is equal to 0. In other words, the measure $\mu^{D,a}_{x_0,z}$ almost surely vanishes on all vertical lines. We will also assume for convenience that $D \subset (0,1) \times \R$.

Let us introduce some notations. 
We will need to consider small portions of the domain which are well-separated from one another. For this reason, we introduce a Cantor-type set $K^\infty$ which we define now. Let $p_0 \geq 1$ (to be thought of as large) and for all $n \geq 1$, let $\mathfrak{D}_n$ be the set of dyadic points of generation exactly $n$, i.e.
\[
\mathfrak{D}_n = \Big\{ (2m+1) 2^{-n} : m \in \{0, \dots, 2^{n-1} -1 \} \Big\}.
\]
For instance, $\mathfrak{D}_1 = \{1/2\}$, $\mathfrak{D}_2 = \{1/4, 3/4\}$, $\mathfrak{D}_3 = \{1/8, 3/8, 5/8, 7/8\}$, etc. We now define $K^0 = [0,1]$ and for all $n \geq 1$,
\begin{equation}
\label{eq:K_n}
K^n := K^{n-1} \setminus \bigcup_{x \in \mathfrak{D}_n} (x - 2^{-(p_0 + 2n)}, x + 2^{-(p_0+2n)} ). 
\end{equation}
We then define
\begin{equation}
\label{eq:K_infty}
K^\infty := \bigcap_{n \geq 0} K^n.
\end{equation}
Later in the proof, we will restrict some measures to the set $D \cap K^\infty \times \R$. This will capture almost entirely our measures since the Lebesgue measure of $D \backslash \left( D \cap K^\infty \times \R \right)$ is at most $C 2^{-p_0}.$ Note also that, as $p_0 \to \infty$, $K^\infty$ increases to $[0,1] \setminus \bigcup_{n \geq 1} \mathfrak{D}_n$.

\medskip

We now start more concretely the proof of Theorem \ref{th:charac}.
Let $p \geq 1$ and $(B_t, t \leq \tau_{\partial D})$ be a Brownian motion distributed according to $\prob^D_{x_0,z}$.
We are going to keep track of the successive Brownian hitting points of $2^{-p} \Z \times \R$: define $\sigma_0^p := 0$, $x^p_0 := x_0$ and $D_0^p := D \cap \left( 2^{-p} \floor{2^p x_0} + \left(-2^{-p}, 2^{-p} \right) \times \R \right)$ and for all $i \geq 1$,
\begin{equation}
\label{eq:proof_thm_charac}
\sigma_i^p := \inf \{ t > \sigma_{i-1}^p: B_t \notin D_{i-1}^p \}, x_i^p := B_{\sigma_i^p}
\mathrm{~and~}
D_i^p := D \cap \left( x_i^p + \left(-2^{-p}, 2^{-p} \right) \times \R \right).
\end{equation}
Let $I_p := \sup \{i \geq 1: \sigma_i^p \leq \tau_{\partial D} \}$. Note that $\sigma_{I_p}^p = \tau_{\partial D}$ and $x^p_{I_p} = z$. See Figure \ref{fig1} for an illustration of these notations.
Let
\[
\Dc^p\Xc^p\Zc^p := \{ (D^p_i,x^p_i,x^p_{i+1}), i=0 \dots I_p -1 \}
\]
and let
\[
\left( \bar{\mu}_{\Xc,\Zc}^{\Dc,a}, \Dc \Xc \Zc \subset \Dc^p\Xc^p\Zc^p \right)
\]
be the process so that conditionally on $x_i^p, i = 1 \dots I_p-1$, it has the same law as
\[
\left( \mu^{\Dc,a}_{\Xc,\Zc}, \Dc \Xc \Zc \subset \Dc^p\Xc^p\Zc^p \right).
\]
Note that with the definition \eqref{eq:proof_thm_charac}, $D_i^p$ may be formed of several connected components. To be more precise, we define $D_i^p$ as being the connected component that contains $x_i^p$ which is a nice domain belonging to $\Dc$. $x_{i+1}^p$ being almost surely a nice boundary point of $D_i^p$ and the $x_i^p, i \geq 1$ being almost surely pairwise distinct, the above random measures are well defined.
An elementary iteration of Property \ref{charac2} shows that
\begin{equation}
\label{eq:decomposition}
\bar{\mu}_{x_0,z}^{D,a}
:=
\sum_{\Dc\Xc\Zc \subset \Dc^p\Xc^p\Zc^p} \bar{\mu}_{\Xc,\Zc}^{\Dc,a}.
\end{equation}
has the same law as $\mu^{D,a}_{x_0,z}$. These definitions are consistent and by Kolmogorov's extension theorem, we can define $x^p_i, i =0 \dots I_p$, $p \geq 1$, $\bar{\mu}_{\Xc,\Zc}^{\Dc,a}$, $\Dc\Xc\Zc \subset \cup_{p \geq 1} \Dc^p\Xc^p\Zc^p$ on the same probability space.

In the rest of the proof, we will work on the specific probability space given by Kolmogorov's extension theorem as above. We will drop the bar and simply write
\[
\mu^{D,a}_{x_0,z},
~\mu^{\Dc,a}_{\Xc,\Zc}
\mathrm{~instead~of~}
\bar{\mu}^{D,a}_{x_0,z},
~\bar{\mu}^{\Dc,a}_{\Xc,\Zc}.
\]
In the following, we will denote by $\Fc_p$ (resp. $\Fc_\infty$) the $\sigma$-algebra generated by $x_i^p, i=1 \dots I_p-1$ (resp. $x_i^p, i=1 \dots I_p-1, p \geq 1$) and
\begin{equation}
\label{eq:charac_def_mup}
\mu_p(dx) = \Expect{ \left. \mu^{D,a}_{x_0,z}(dx) \right\vert \Fc_p}.
\end{equation}
By \eqref{eq:decomposition} and Property \ref{charac1}, $\mu_p(dx)$ does not depend on the process
$(\mu^{\Dc,a}_{\Xc,\Zc}, \Dc\Xc\Zc \in \Sc)$
any more since it is equal to
\begin{equation}
\label{eq:charac_def_mup2}
\sum_{r=1}^{I_p} \sum_{\{i_1 \dots i_r\} \subset \{0 \dots I_p-1\} } \int_{\mathsf{a} \in E(a,r)} d \mathsf{a} \prod_{k=1}^r \psi_{x_{i_k}^p,x_{i_k+1}^p}^{D^p_{i_k},a_k}(x) dx.
\end{equation}

The following lemma is a key feature of the proof:

\begin{lemma}\label{lem:charac}
There exists an a.s. finite random Borel measure $\mu_\infty$ such that for all bounded measurable function $f: D \to \R$, $(\scalar{\mu_p,f}, p \geq 1)$ is a martingale and converges a.s. to $\scalar{ \mu_\infty,f}$.
\end{lemma}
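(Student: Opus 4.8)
The plan is to establish Lemma \ref{lem:charac} by identifying $(\scalar{\mu_p,f}, p\geq 1)$ as a Doob martingale and then applying martingale convergence. First I would verify the martingale property: by definition \eqref{eq:charac_def_mup}, $\scalar{\mu_p,f} = \Expect{\scalar{\mu_{x_0,z}^{D,a},f}\vert \Fc_p}$, and the key point is that the filtration $(\Fc_p)_{p\geq 1}$ is increasing. This is because the hitting points $x_i^q, i=1\dots I_q-1$ of $2^{-q}\Z\times\R$ are a subset of the hitting points $x_i^p, i=1\dots I_p-1$ of $2^{-p}\Z\times\R$ whenever $q \leq p$ (every vertical line of the coarser grid is also a line of the finer grid); hence $\Fc_q \subset \Fc_p$. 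Once monotonicity of the filtration is in hand, the tower property gives $\Expect{\scalar{\mu_p,f}\vert \Fc_q} = \scalar{\mu_q,f}$ for $q\leq p$, so $(\scalar{\mu_p,f})_{p\geq1}$ is a martingale with respect to $(\Fc_p)$. One should note here that $\scalar{\mu_p,f}$ is genuinely $\Fc_p$-measurable and integrable: Property \ref{charac1} ensures $\Expect{\mu_{x_0,z}^{D,a}(\C)} < \infty$, so $\Expect{\abs{\scalar{\mu_p,f}}} \leq \norme{f}_\infty \Expect{\mu_{x_0,z}^{D,a}(\C)} < \infty$, and formula \eqref{eq:charac_def_mup2} exhibits $\mu_p$ explicitly as an $\Fc_p$-measurable (in fact $\sigma(x_i^p)$-measurable) quantity.

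Next I would deduce almost-sure convergence. Taking $f \equiv 1$, the nonnegative martingale $(\mu_p(D))_{p\geq1}$ converges a.s.\ to a finite limit by the martingale convergence theorem; more robustly, $(\scalar{\mu_p,f})_{p\geq 1}$ is a martingale closed on the right by $\scalar{\mu_{x_0,z}^{D,a},f}$ (it is a Doob martingale of an integrable random variable), hence is uniformly integrable and converges a.s.\ and in $L^1$ to $\Expect{\scalar{\mu_{x_0,z}^{D,a},f}\vert \Fc_\infty}$, where $\Fc_\infty = \bigvee_p \Fc_p$. To package these pointwise limits into a single random measure $\mu_\infty$, I would apply the Riesz representation / vague-compactness machinery: work on a countable convergence-determining class of bounded continuous functions $f$ on $D$ (or on compactly supported continuous functions), obtain the a.s.\ limit along this class on a single probability-one event, and check the resulting positive linear functional is of the form $f \mapsto \scalar{\mu_\infty,f}$ for an a.s.\ finite Borel measure $\mu_\infty$ (finiteness being inherited from $\lim_p \mu_p(D) < \infty$ a.s.). Then a standard approximation argument extends the identity $\scalar{\mu_p,f}\to\scalar{\mu_\infty,f}$ from continuous $f$ to all bounded measurable $f$, using that the total masses converge.

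The main obstacle, and the only genuinely delicate point, is passing from the separately-defined a.s.\ limits $\scalar{\mu_p,f}\to (\text{limit})_f$ (one null set per $f$) to an honest random measure $\mu_\infty$ with $\scalar{\mu_\infty,f}$ equal to that limit \emph{simultaneously} for all bounded measurable $f$; this is where non-atomicity (Property \ref{charac3}) and the finiteness of the first moment (Property \ref{charac1}) do the work of guaranteeing the limiting functional is countably additive rather than merely finitely additive on the test class. Everything else — the martingale property, integrability, and a.s.\ convergence along a fixed $f$ — is routine given the monotonicity $\Fc_q\subset\Fc_p$ observed above. Note that identifying $\mu_\infty$ with $\mu_{x_0,z}^{D,a}$ itself (i.e.\ showing the approximation error vanishes) is \emph{not} part of this lemma; that is the content of the remainder of the proof of Theorem \ref{th:charac}, and Lemma \ref{lem:charac3old} will be invoked there, not here.
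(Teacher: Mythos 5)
Your proposal follows essentially the same route as the paper: verify that the filtration is increasing (because $2^{-q}\mathbb{Z}\subset 2^{-p}\mathbb{Z}$ for $q\leq p$, so the hitting points of the coarse grid form a subsequence of those of the fine grid), deduce the martingale property by the tower rule, invoke martingale convergence, and upgrade the family of pointwise a.s.\ limits to a single a.s.\ finite random Borel measure. The paper phrases the last step in terms of a countable $\pi$-system of Borel sets generating $\Bc(D)$ and cites standard arguments (Section~6 of \cite{berestycki2017}), whereas you propose a countable convergence-determining class of continuous functions plus Riesz representation; these are cosmetic variants of the same machinery.

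One point to correct: your claim that non-atomicity (Property~\ref{charac3}) ``does the work of guaranteeing the limiting functional is countably additive'' is a misattribution. The passage from a.s.\ martingale limits on a countable $\pi$-system to an honest random Borel measure is a standard fact about measure-valued martingales that uses only the finiteness of the total mass (Property~\ref{charac1}); it does not invoke non-atomicity. Property~\ref{charac3} enters the proof of Theorem~\ref{th:charac} only through Lemma~\ref{lem:charac3old} and the estimate~\eqref{eq:lem_used}, as you correctly note in your final sentence, which is slightly in tension with the earlier claim.
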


\begin{proof}
Let $\mathcal{P}$ be a countable $\pi$-system generating the Borel sets of $D$. For all $A \in \mathcal{P}$, $(\mu_p(A), \Fc_p)_{p \geq 1}$ is a non-negative martingale thanks to \eqref{eq:charac_def_mup}. Hence, almost surely for all $A \in \mathcal{P}$, $\mu_p(A)$ converges towards some $L(A)$. By standard arguments (see Section 6 of \cite{berestycki2017} for instance), one can show that it implies that there exists an a.s. finite random Borel measure $\mu_\infty$ such that almost surely for all $A \in \mathcal{P}$, $L(A) = \mu_\infty(A)$. It moreover implies that almost surely for all bounded measurable function $f$, $\scalar{\mu_p,f}$ converges towards $\scalar{\mu_\infty,f}$.
\end{proof}

Since $\mu_\infty$ is entirely characterised by Properties \ref{charac1}-\ref{charac3}, it is enough to show that $\mu^{D,a}_{x_0,z} = \mu_\infty$ a.s. to conclude the proof of Theorem \ref{th:charac}. Since two finite measures which coincide on a (countable) $\pi$-system generating the Borel sets of $\C$ are equal, it is further enough to show that for all Borel set $A \subset \C$, $\mu^{D,a}_{x_0,z}(A) = \mu_\infty(A)$ a.s. We then notice that it is enough to show that for all $t >0$ and Borel set $A$,
\begin{equation}
\label{eq:proof_charac_goal}
\Expect{ \left. e^{-t\mu^{D,a}_{x_0,z}(A)} \right\vert \Fc_\infty } = e^{-t\mu_\infty(A)} \quad \quad \mathrm{a.s.}
\end{equation}
Indeed, it proves that conditionally on  $\Fc_\infty$ the Laplace transform of $\mu^{D,a}_{x_0,z}(A)$ is almost surely equal to the Laplace transform of the constant $\mu_\infty(A)$ on all the positive rational numbers which in turn proves that $\mu^{D,a}_{x_0,z}(A) = \mu_\infty(A)$ a.s.
Until the end of the proof we will fix such a Borel set $A$.
We reduce the problem one last time: recall the definition \eqref{eq:K_infty} of the Cantor-type set $K^\infty$ (which depends on the integer $p_0$) that we introduced at the beginning of the proof and recall that $K^\infty$ increases with $p_0$ towards $[0,1] \setminus \bigcup_{n \geq 1} \mathfrak{D}_n$ (see the discussion below \eqref{eq:K_infty}). By computing the first moment of the variables below, we see that
\[
\mu^{D,a}_{x_0,z} (\bigcup_{n \geq 1} \mathfrak{D}_n \times \R) = \mu_\infty (\bigcup_{n \geq 1} \mathfrak{D}_n \times \R) = 0 \quad \quad \text{a.s.}
\]
Therefore, as $p_0 \to \infty$,
\[
\mu^{D,a}_{x_0,z}(A \cap K^\infty) \to \mu^{D,a}_{x_0,z}(A) \quad \text{and} \quad
\mu_\infty (A \cap K^\infty) \to \mu_\infty(A) \quad \quad \text{a.s.}
\]
In other words, we can safely assume that $A$ is included in $K^\infty$. This assumption will be made for the rest of the proof.

Our objective is to show \eqref{eq:proof_charac_goal}. Without loss of generality, we can assume that $t=1$.
One direction is easy: by \eqref{eq:charac_def_mup}, we have
\[
\Expect{ \left. \mu^{D,a}_{x_0,z}(A) \right\vert \Fc_p} = \mu_p(A) \quad \quad \mathrm{a.s.},
\]
so by Jensen's inequality,
\[
\Expect{\left. e^{-\mu^{D,a}_{x_0,z}(A)} \right\vert \Fc_p} \geq e^{-\mu_p(A)} \quad \quad \mathrm{a.s}
\]
By Lemma \ref{lem:charac}, $\mu_p(A) \to \mu_\infty(A)$ a.s. So by letting $p \to \infty$ we get
\[
\Expect{\left. e^{-\mu^{D,a}_{x_0,z}(A)} \right\vert \Fc_\infty} \geq e^{-\mu_\infty(A)} \quad \quad \mathrm{a.s}
\]
For the reverse direction, we use Lemma 3.12 of \cite{BiskupLouidor} which provides a ``reverse Jensen'' inequality that we recall.

\begin{lemmaa}[\cite{BiskupLouidor}, Lemma 3.12]\label{lem:reverse_Jensen}
If $X_1, \dots, X_n$ are non-negative independent random variables, then for each $\eps >0$,
\[
\Expect{ e^{- \sum_{i=1}^n X_i } } \leq \exp \left( -e^{-\eps} \sum_{i=1}^n \Expect{X_i;X_i \leq \eps} \right).
\]
\end{lemmaa}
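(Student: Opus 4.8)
The plan is to reduce to a one-variable estimate via independence and then prove that estimate by two elementary inequalities. First I would use independence to write
\[
\Expect{e^{-\sum_{i=1}^n X_i}} = \prod_{i=1}^n \Expect{e^{-X_i}},
\]
so it is enough to show, for a single nonnegative random variable $X$ and every $\eps>0$, that
\[
\Expect{e^{-X}} \le \exp\!\left(-e^{-\eps}\,\Expect{X; X \le \eps}\right);
\]
taking the product of these bounds over $i$ and recombining exponentials then yields the lemma.

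For the single-variable bound I would apply the inequality $1+y \le e^{y}$ with $y = \Expect{e^{-X}}-1 \le 0$, which shows it suffices to prove
\[
\Expect{1 - e^{-X}} \ge e^{-\eps}\,\Expect{X; X \le \eps}.
\]
Since $1 - e^{-x}\ge 0$ for $x\ge 0$, the left-hand side dominates $\Expect{(1-e^{-X})\indic{X \le \eps}}$, and on $\{X \le \eps\}$ one has the pointwise bound
\[
1 - e^{-x} = \int_0^x e^{-t}\,dt \ge e^{-\eps}x, \qquad 0 \le x \le \eps,
\]
so taking expectations gives $\Expect{(1-e^{-X})\indic{X \le \eps}} \ge e^{-\eps}\Expect{X; X \le \eps}$, as required.

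There are no integrability issues to worry about, since $\Expect{e^{-X}}\in(0,1]$ and $\Expect{X; X\le\eps}\le \eps$ automatically, so in fact there is no genuinely difficult step. The only point needing a little care is pinning down the precise constant $e^{-\eps}$: the integral representation $1-e^{-x}=\int_0^x e^{-t}\,dt$ is the most transparent route (alternatively one can use convexity of $x\mapsto e^{-x}$, or the bound $1-e^{-x}\ge xe^{-x}$, for the same constant). It is worth noting that the truncation at level $\eps$ is indispensable: without it the bound would amount to $\Expect{1-e^{-X}}\ge\Expect{X}$, which is false as soon as $X$ is not small, so the parameter $\eps$ is precisely what makes the inequality hold in full generality.
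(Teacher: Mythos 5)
Your proof is correct. Note that in this paper the lemma is stated only as a citation to \cite{BiskupLouidor} (Lemma 3.12) and no proof is given, so there is nothing internal to compare it against. Your argument — factor by independence, reduce to the one-variable bound $\Expect{e^{-X}}\le\exp\bigl(-\Expect{1-e^{-X}}\bigr)$ via $1+y\le e^y$, then lower-bound $\Expect{1-e^{-X}}$ by truncating at $\eps$ and using $1-e^{-x}\ge e^{-\eps}x$ on $[0,\eps]$ — is the standard elementary derivation and is complete; each step checks out, including the direction of the inequalities after exponentiating.
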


Let $p \geq 1$ be much larger than $p_0$ and let $n \geq 1$ be such that $p_0 + 2n = p$ (or such that $p_0 + 2n = p-1$, depending on the parity).
Recall the definition \eqref{eq:K_n} of $K^n$. We will denote by $K^{n,m}, m =1, \dots, 2^n$, the connected components of $K^n$.
We notice that conditioned on $\Fc_p$, the measures $\mu^{D,a}_{x_0,z}(\bigcdot \cap K^{n,m})$, $m =1 \dots 2^n$, are independent. Indeed, looking at \eqref{eq:decomposition} we see that Property \ref{charac2bis} implies that conditioned on $\Fc_p$, $\mu^{D,a}_{x_0,z}(\bigcdot \cap A_1)$ and $\mu^{D,a}_{x_0,z}(\bigcdot \cap A_2)$ are independent as soon as the projections of $A_1$ and $A_2$ on the real axis are at distance at least $2\times 2^{-p}$ from each other. 
The whole introduction of the set $K^\infty$ is motivated by this fact.
Now, because $A \subset K^\infty$ and by Lemma \ref{lem:reverse_Jensen}, we deduce that for each $\eps >0$,
\begin{align*}
& \Expect{ \left. e^{- \mu^{D,a}_{x_0,z}(A) } \right\vert \Fc_p} \\
& \leq \exp \left( -e^{-\eps} \sum_{m=1}^{2^n} \Expect{ \left. \mu^{D,a}_{x_0,z}(A \cap K^{n,m}) ; \mu^{D,a}_{x_0,z}(A \cap K^{n,m}) \leq \eps \right\vert \Fc_p } \right) \quad \quad \mathrm{a.s.}
\end{align*}
To conclude that
\[
\Expect{ \left. e^{- \mu^{D,a}_{x_0,z}(A) } \right\vert \Fc_\infty } \leq
e^{ - \mu_\infty(A) } \quad \quad \mathrm{a.s.,}
\]
it is thus enough to show that a.s.
\begin{equation*}
\liminf_{\eps \to 0} \liminf_{p \to \infty} \sum_{m=1}^{2^n} \Expect{ \left. \mu^{D,a}_{x_0,z}(A \cap K^{n,m}) ; \mu^{D,a}_{x_0,z}(A \cap K^{n,m}) \leq \eps \right\vert \Fc_p }
\geq \mu_\infty(A).
\end{equation*}
We have
\begin{align*}
& \liminf_{\eps \to 0} \liminf_{p \to \infty} \sum_{m=1}^{2^n} \Expect{ \left. \mu^{D,a}_{x_0,z}(A \cap K^{n,m}) ; \mu^{D,a}_{x_0,z}(A \cap K^{n,m}) \leq \eps \right\vert \Fc_p } \\
& \geq \mu_\infty(A) - \limsup_{\eps \to 0} \limsup_{p \to \infty} \sum_{m=1}^{2^n} \Expect{ \left. \mu^{D,a}_{x_0,z}(A \cap K^{n,m}) ; \mu^{D,a}_{x_0,z}(A \cap K^{n,m}) > \eps \right\vert \Fc_p } \quad \quad \mathrm{a.s.}
\end{align*}
But by Lemma \ref{lem:charac3old} and dominated convergence theorem,
\begin{align}
\label{eq:lem_used}
& \Expect{ \sum_{m=1}^{2^n} \Expect{ \left. \mu^{D,a}_{x_0,z}(A \cap K^{n,m}) ; \mu^{D,a}_{x_0,z}(A \cap K^{n,m}) > \eps \right\vert \Fc_p } } \\
\nonumber
& =
\sum_{m=1}^{2^n} \Expect{ \mu^{D,a}_{x_0,z} (A \cap K^{n,m}) ; \mu^{D,a}_{x_0,z} (A \cap K^{n,m}) > \eps }
\end{align}
tends to zero as $p \to \infty$ (recall that $n \to \infty$ as $p \to \infty$). Hence, by extracting a subsequence if necessary, we have
\[
\limsup_{\eps \to 0} \limsup_{p \to \infty} \sum_{m=1}^{2^n} \Expect{ \left. \mu^{D,a}_{x_0,z}(A \cap K^{n,m}) ; \mu^{D,a}_{x_0,z}(A \cap K^{n,m}) > \eps \right\vert \Fc_p } = 0 \quad \quad \mathrm{a.s.}
\]
which concludes the proof of Theorem \ref{th:charac}.
\end{proof}

\section{Application to random walk: proof of Theorem \ref{th:convergence}}\label{sec:RW}

We start off by defining the multipoint analogue of $\mu_{x_0,z;N}^{U,a}$. Let $r \geq 1$ and $\Dc\Xc\Zc = \{(D^i,x_i,z_i)$, $i =1 \dots r \} \in \Sc$. Let $X^{(i)}$, $i=1 \dots r$, be $r$ independent random walk distributed according to $\prob_{Nx_i,Nz_i}^{D^i_N}$ or according to $\prob_{Nx_i}^{D^i_N}$ and let $\ell_x^{(i)}$ be their associated local times. We define simultaneously for all $\Dc'\Xc'\Zc' = \{(D^i,x_i,z_i), i \in I\} \subset \Dc\Xc\Zc$ the measures given by: for all Borel set $A$,
\begin{equation}
\label{eq:def_multipoint_discrete}
\mu_{\Xc',\Zc';N}^{\Dc',a}(A) := \frac{\log N}{N^{2 - a}} \sum_{x \in \Z^2} \indic{x/N \in A} \indic{ \sum_{i \in I} \ell_x^{(i)} \geq g a \log^2 N} \indic{ \forall i \in I, \ell_x^{(i)} >0 }
\end{equation}
under the probability
$\bigotimes_{i=1}^r \prob_{Nx_i,Nz_i}^{D^i_N}$. We define similarly the unconditioned measures $\mu_{\Xc';N}^{\Dc',a}$, $\Dc'\Xc' \subset \Dc\Xc$, under $\bigotimes_{i=1}^r \prob_{Nx_i}^{D^i_N}$.

\subsection{Tightness and first moment estimates}\label{sec:tight}

In this section we fix a nice domain $D$.
We start by recalling Green's function and Poisson kernel asymptotic behaviours. Recall the notations of Section \ref{subsec:notations}.

\begin{lemma}[Green's function]\label{lem:Green}
Let $K \Subset D$. There exist $C,C_K>0$ such that for all $x,y \in \Z^2$,
\begin{gather}
\label{eq:lem_Green_upper_bound}
G^{D_N}(x,y) \leq g \log \frac{N}{\abs{x-y} \vee 1} + C, \mathrm{~if~} x,y \in D_N, \\
\label{eq:lem_Green_lower_bound}
G^{D_N}(x,y) \geq g \log \frac{N}{\abs{x-y} \vee 1} - C_K, \mathrm{~if~} \frac{x}{N}, \frac{y}{N} \in K.
\end{gather}
Moreover, for all $x\neq y \in D$, we have
\begin{gather}
\label{eq:lem_Green_limit_on_diagonal}
\lim_{N \to \infty} G^{D_N} \left( \floor{Nx} , \floor{Nx} \right) - g \log N = g \log \CR(x,D)+c_0, \\
\label{eq:lem_Green_limit_off_diagonal}
\lim_{N \to \infty} G^{D_N} \left( \floor{Nx} , \floor{Ny} \right) = g G^D(x,y),
\end{gather}
where $c_0$ is the universal constant defined in \eqref{eq:c0}.
\end{lemma}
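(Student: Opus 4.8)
The plan is to reduce the entire statement to the classical potential theory of planar simple random walk. First I would observe that, because the continuous-time walk of jump rate one spends an independent $\mathrm{Exp}(1)$ holding time (mean $1$) at each visited vertex, $G^{D_N}(x,y)=\EXPECT{x}{\ell_y^{\tau_{\partial D_N}}}$ is exactly the expected number of visits to $y$ before $\tau_{\partial D_N}$ of the discrete jump chain $S$, i.e. the discrete-time Green's function $g_{D_N}$ of simple random walk on $\Z^2$ killed on hitting $\partial D_N$ (the time reparametrisation does not affect the exit position either, which we will need below). Next I would bring in the potential kernel $\mathfrak a$ of planar simple random walk (with $\mathfrak a(0)=0$, $\mathfrak a$ harmonic off the origin) and the standard last-exit identity, obtained by optional stopping applied to the martingale built from $\mathfrak a(S_\cdot - y)$,
\[
g_{D_N}(x,y)=\E_x\!\left[\mathfrak a\big(S_{\tau_{\partial D_N}}-y\big)\right]-\mathfrak a(x-y),
\]
valid for $x,y\in D_N$ (and trivial for $y\notin D_N$). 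The only genuinely analytic input is the expansion $\mathfrak a(w)=g\log|w|+c_0+O(|w|^{-2})$ as $|w|\to\infty$ with $g=2/\pi$ and $c_0=\tfrac2\pi(\gamma_{\rm EM}+\tfrac12\log 8)$; this is precisely where the two universal constants of \eqref{eq:c0} enter, and I would simply quote it (e.g. from Lawler--Limic or Fukai--Uchiyama).

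With this identity in hand the two bounds are bookkeeping. For \eqref{eq:lem_Green_upper_bound} I would use that $0\le\mathfrak a(w)\le g\log(|w|\vee1)+C$ uniformly together with the crude geometric fact $\mathrm{diam}(D_N)\le CN$, so that $\E_x[\mathfrak a(S_{\tau_{\partial D_N}}-y)]\le g\log N+C$, while $-\mathfrak a(x-y)\le -g\log(|x-y|\vee1)+C$. For \eqref{eq:lem_Green_lower_bound} I would note that if $x/N,y/N\in K\Subset D$ then the exit point lies within $O(1/N)$ of $\R^2\setminus D$, hence $|S_{\tau_{\partial D_N}}-y|\ge c_KN$, giving $\E_x[\mathfrak a(S_{\tau_{\partial D_N}}-y)]\ge g\log N-C_K$, which I would combine with $-\mathfrak a(x-y)\ge -g\log(|x-y|\vee1)-C$. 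The regularity built into the definition of $U_N$ (including the path condition) is used here only to guarantee $\mathrm{diam}(D_N)=O(N)$ and that the rescaled discrete domain does not intrude into $K$.

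For the two limits, fix $x$ and $y$ in $D$ (with $x\ne y$ in the off-diagonal case) and set $x_N=\floor{Nx}$, $y_N=\floor{Ny}$. I would use Donsker's invariance principle together with the fact that, for a nice domain, the exit position is an almost surely continuous functional of the path, to get that $S_{\tau_{\partial D_N}}/N$ converges in law under $\prob^{D_N}_{x_N}$ to $B_{\tau_{\partial D}}$ under $\prob^D_x$, i.e. to the harmonic measure $\omega^D_x$. Since $y$ is interior, $|S_{\tau_{\partial D_N}}-y_N|\ge cN$, so the expansion of $\mathfrak a$ applies uniformly and $\log|S_{\tau_{\partial D_N}}/N-y|$ is bounded; hence
\[
\E_{x_N}\!\left[\mathfrak a\big(S_{\tau_{\partial D_N}}-y_N\big)\right]=g\log N+c_0+g\int_{\partial D}\log|w-y|\,\omega^D_x(dw)+o(1).
\]
Subtracting $\mathfrak a(x_N-y_N)=g\log N+g\log|x-y|+c_0+o(1)$ yields \eqref{eq:lem_Green_limit_off_diagonal} once one identifies $\phi_y(\cdot):=\int_{\partial D}\log|w-y|\,\omega^D_\cdot(dw)$ as the bounded harmonic extension of $x\mapsto\log|x-y|$, so that $G^D(x,y)=-\log|x-y|+\phi_y(x)$ (both sides are harmonic in $x$ off $y$, have the same logarithmic singularity at $y$, and vanish on $\partial D$). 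Using instead $\mathfrak a(0)=0$ gives $g_{D_N}(x_N,x_N)-g\log N\to c_0+g\int_{\partial D}\log|w-x|\,\omega^D_x(dw)$, and I would finish with the classical identity $\int_{\partial D}\log|w-x|\,\omega^D_x(dw)=\log\CR(x,D)$: taking $f_D$ conformal with $f_D(x)=0$, conformal invariance of harmonic measure reduces this to the mean-value property of the harmonic function $\log\big|(f_D^{-1}(\zeta)-x)/\zeta\big|$ on $\D$ evaluated at $\zeta=0$, which equals $-\log|f_D'(x)|=\log\CR(x,D)$ by \eqref{eq:conformal_radius}. This is \eqref{eq:lem_Green_limit_on_diagonal}.

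Conceptually there is no real obstacle: everything rests on the sharp potential-kernel expansion, which carries the constants and which I quote. The mild technical points to handle with care are the uniformity of the $O(|w|^{-2})$ error (harmless because the relevant arguments grow linearly in $N$), the elementary geometric control on $D_N$ coming from its definition, and the convergence of the discrete exit distribution to harmonic measure — standard for domains with analytic boundary but worth stating cleanly since it is the place where the ``nice domain'' hypothesis is genuinely used.
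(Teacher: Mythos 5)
Your proof is correct, and it reconstructs from scratch what the paper simply cites: the paper's own ``proof'' consists of referring \eqref{eq:lem_Green_upper_bound}--\eqref{eq:lem_Green_lower_bound} to Lawler's Theorem 1.6.2 and Proposition 1.6.3 and \eqref{eq:lem_Green_limit_on_diagonal}--\eqref{eq:lem_Green_limit_off_diagonal} to Theorem 1.17 of Biskup's GFF lecture notes. What you have written is, in essence, the standard argument behind those references: the continuous-time/discrete-time reduction via the unit-mean holding times, the last-exit identity $g_{D_N}(x,y)=\E_x[\mathfrak a(S_{\tau_{\partial D_N}}-y)]-\mathfrak a(x-y)$, the sharp potential-kernel expansion carrying $g=2/\pi$ and $c_0=\tfrac2\pi(\gamma_{\rm EM}+\tfrac12\log 8)$, the crude geometric bounds giving the two inequalities, and Donsker plus the identity $\int_{\partial D}\log|w-x|\,\omega^D_x(dw)=\log\CR(x,D)$ for the two limits. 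This is the correct and natural route; the only thing the paper adds that you did not is economy (it does not re-derive), and the only point worth flagging in your write-up is that the convergence of discrete exit distributions to harmonic measure is itself nontrivial and is in fact exactly the content of the paper's Lemma \ref{lem:Poisson}, which you would then be implicitly invoking rather than proving. The identification $\phi_y(x)=\int_{\partial D}\log|w-y|\,\omega^D_x(dw)$ with the bounded harmonic extension and the mean-value computation giving $-\log|f_D'(x)|=\log\CR(x,D)$ are both correct.
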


\begin{proof}
\eqref{eq:lem_Green_upper_bound} and \eqref{eq:lem_Green_lower_bound} are direct consequences of \cite{lawler1996intersections} Theorem 1.6.2 and Proposition 1.6.3. \eqref{eq:lem_Green_limit_on_diagonal} and \eqref{eq:lem_Green_limit_off_diagonal} are contained in Theorem 1.17 of \cite{BiskupLectures}.
\end{proof}

\begin{lemma}[Poisson kernel]\label{lem:Poisson}
Let $K \Subset D$ and $\alpha >0$. For all $N$ large enough, $x,y \in K$ and $z \in \partial D$ a nice point, we have
\begin{equation}
\label{eq:lem_Poisson}
\abs{ \frac{H^{D_N}(\floor{Nx},\floor{Nz})}{H^{D_N}(\floor{Ny},\floor{Nz})} - \frac{H^D(x,z)}{H^D(y,z)} } \leq \alpha.
\end{equation}
Moreover, for all $x \in D$, the following weak convergence holds:
\begin{equation}
\label{eq:lem_Poisson_convergence}
\sum_{z \in \partial D_N} H^{D_N}( \floor{Nx}, z) \delta_{z/N}(\cdot) \xrightarrow[N \to \infty]{\mathrm{weakly}} \int_{\partial D} H^D(x,z) \delta_z(\cdot).
\end{equation}
\end{lemma}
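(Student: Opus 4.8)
The plan is to reduce everything to the known Green's function asymptotics from Lemma \ref{lem:Green} via the standard last-exit (or first-entrance) decomposition that expresses the discrete Poisson kernel in terms of the discrete Green's function. Concretely, for a simple random walk on $D_N$ started at $x$, the harmonic measure of a boundary point $z$ can be written, for nice $z$, via a last-visit decomposition at the interior neighbour of $z$: writing $z'$ for a lattice neighbour of $z$ inside $D_N$, one has $H^{D_N}(x,z) = \sum_{z'} G^{D_N}(x,z') \, p(z',z)$ up to the finitely many choices of $z'$, where $p(\cdot,\cdot)$ is the one-step transition kernel. So the ratio $H^{D_N}(\floor{Nx},\floor{Nz})/H^{D_N}(\floor{Ny},\floor{Nz})$ is (a convex-combination-weighted ratio of) $G^{D_N}(\floor{Nx},z')/G^{D_N}(\floor{Ny},z')$ where $z'/N \to z \in \partial D$. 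Since $x,y$ range over a compact $K \Subset D$ and $z'/N$ stays near the boundary, the pair $(\floor{Nx},z')$ is in the off-diagonal regime; applying \eqref{eq:lem_Green_limit_off_diagonal} (or more precisely a version of it that is uniform for $x \in K$ and the second argument near the boundary) gives $G^{D_N}(\floor{Nx},z') \to g\,G^D(x,z'/N)$, and continuity of $G^D(\cdot,\cdot)$ up to nice boundary points together with $G^D(x,z)/G^D(y,z)$ extending continuously to $H^D(x,z)/H^D(y,z)$ (by the boundary Harnack principle / the explicit conformal formulas \eqref{eq:Green_conformal}--\eqref{eq:Poisson_conformal}) yields \eqref{eq:lem_Poisson}. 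The uniformity over $x,y \in K$ is obtained because $K$ is compact and the Green's function convergence in Lemma \ref{lem:Green} is locally uniform.

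For the weak convergence \eqref{eq:lem_Poisson_convergence}, I would test against an arbitrary bounded continuous $\varphi$ on $\overline{D}$ (or on a neighbourhood of $\partial D$) and show $\sum_{z \in \partial D_N} H^{D_N}(\floor{Nx},z)\varphi(z/N) \to \int_{\partial D} H^D(x,z)\varphi(z)\,dz$. The left side is $\expect_{\floor{Nx}}^{D_N}[\varphi(X_{\tau_{\partial D_N}}/N)]$ and the right side is $\expect_x^D[\varphi(B_{\tau_{\partial D}})]$, so this is exactly the statement that the exit distribution of the rescaled random walk from $D_N$ converges weakly to the exit distribution of Brownian motion from $D$. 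This is a classical invariance-principle fact for nice (in particular, Jordan, piecewise-analytic) domains: couple the walk with a Brownian motion via Donsker/KMT, use that $\partial D$ is regular for the complement (every boundary point of a nice domain is regular, since the boundary is locally analytic, hence satisfies a cone/corkscrew condition), and conclude that the exit point converges. Equivalently one may invoke the known convergence $H^{D_N}(\floor{Nx},\cdot) \to H^D(x,\cdot)$ in the sense of weak convergence of measures on $\partial D$, which is standard; references such as \cite{lawler1996intersections} or the discrete potential theory literature give this for domains with good boundary behaviour.

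The main obstacle is the uniform control near the boundary: Lemma \ref{lem:Green}'s limit \eqref{eq:lem_Green_limit_off_diagonal} is stated for fixed $x \neq y \in D$, whereas here the second argument $z'/N$ converges to a boundary point, so I need a boundary version. The clean way around this is to not push the Green's function all the way to the boundary but instead to peel off a small annulus: fix a small $\delta>0$, decompose the walk from $\floor{Nx}$ according to its last visit to the ``inner boundary'' $\{w \in D_N : \mathrm{dist}(w/N,\partial D)\in[\delta,2\delta]\}$, apply the locally uniform off-diagonal convergence \eqref{eq:lem_Green_limit_off_diagonal} on that inner set (which stays in a compact subset of $D$), and then handle the passage from the inner boundary to the actual boundary point $z$ by comparing the discrete and continuous harmonic measures of $\partial D$ seen from the inner boundary — this last comparison is uniform by the boundary Harnack principle for nice domains and can be made arbitrarily sharp by sending $\delta \to 0$. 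Combining this with \eqref{eq:lem_Poisson_convergence} (which controls the inner-boundary hitting distribution) closes the argument and delivers \eqref{eq:lem_Poisson} with the stated uniformity.
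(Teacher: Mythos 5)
Your route is genuinely different from the paper's: the paper gives essentially no proof, simply invoking \cite[Lemma 1.2]{yadin2011} for \eqref{eq:lem_Poisson} (noting that estimates of this type are standard in the LERW-to-$\mathrm{SLE}_2$ literature) and \cite[Lemma 1.23]{BiskupLectures} for \eqref{eq:lem_Poisson_convergence}, whereas you sketch a self-contained argument via the last-exit decomposition of the discrete Poisson kernel through the discrete Green function, combined with the Green function asymptotics of Lemma \ref{lem:Green}. Your identification of the main obstacle — that \eqref{eq:lem_Green_limit_off_diagonal} is an interior statement and does not by itself give uniformity when the second argument approaches $\partial D$ — is exactly right, and it is precisely the reason the paper outsources this to a reference that handles the boundary behaviour carefully. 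Your ``peel off an annulus'' workaround is the correct kind of idea, and the two main technical inputs it needs (locally uniform Green function convergence on a compact inner ring, plus the boundary behaviour $G^D(x,w)/G^D(y,w) \to H^D(x,z)/H^D(y,z)$ as $w \to z$, which follows from \eqref{eq:Green_conformal}--\eqref{eq:Poisson_conformal}) are available. What each approach buys: the paper's citation is shorter and certainly rigorous; yours makes visible exactly how the Poisson kernel estimate rests on the Green function estimates already in Lemma \ref{lem:Green}, which is pedagogically useful.

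Two points where your sketch is looser than it should be. First, after decomposing $H^{D_N}(\floor{Nx},\floor{Nz}) = \sum_{w\in\Gamma_\delta} G^{D_N}(\floor{Nx},w)\,\mathrm{esc}(w,\floor{Nz})$ by last exit from the inner ring $\Gamma_\delta$, the ratio you want is a weighted average of Green-function ratios with the \emph{same} escape-probability weights in numerator and denominator; to push this to $H^D(x,z)/H^D(y,z)$ you need the escape probability $\mathrm{esc}(\cdot,\floor{Nz})$ to be concentrated on the part of $\Gamma_\delta$ within $O(\delta)$ of $z$ (a genuine estimate, not just ``boundary Harnack''), and then the boundary behaviour of $G^D(x,\cdot)/G^D(y,\cdot)$ near $z$. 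Your phrasing ``comparing the discrete and continuous harmonic measures of $\partial D$ seen from the inner boundary'' conflates this with harmonic-measure convergence near the boundary, which is close to what one is trying to prove and should be avoided. Second, your appeal to \eqref{eq:lem_Poisson_convergence} for controlling the inner-boundary hitting distribution is a misattribution: \eqref{eq:lem_Poisson_convergence} concerns exit through $\partial D_N$, while the hitting distribution of the interior curve $\Gamma_\delta$ is a plain Donsker-type statement (hitting a nice compact curve strictly inside $D$) and should be cited as such.
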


\begin{proof}
Statements of the flavour of \eqref{eq:lem_Poisson} have been extensively studied to show the convergence of loop-erased random walk towards $\mathrm{SLE}_2$. \eqref{eq:lem_Poisson} is a direct consequence of \cite[Lemma 1.2]{yadin2011} for instance.
\eqref{eq:lem_Poisson_convergence} is the content of \cite[Lemma 1.23]{BiskupLectures}.
\end{proof}

These two lemmas allow us to derive the first moment estimates that we need. In the following we let $\Dc\Xc\Zc = \{ (D^i,x_i,z_i), i=1 \dots r\} \in \Sc$ and $\Dc\Xc = \{(D^i,x_i), i = 1 \dots r\}$ and we denote $\ell_x^{(i)}$ the local times associated to the $i$-th random walk as at the very beginning of Section \ref{sec:RW}. For all nice domain $D$ and $x_0 \in D$, we will also denote for all $x \in \C$,
\[
\varphi_{x_0}^{D,a}(x) = G^D(x_0,x) \CR(x,D)^a \indic{x \in D}.
\]

\begin{lemma}\label{lem:1-point}
There exists $C>0$ such that for all $N \geq 1$ and $x \in \C$,
\begin{align}
\label{eq:lem_bound}
& \log (N) N^a \bigotimes_{i=1}^r \prob^{D^i_N}_{Nx_i} \left( \sum_{i=1}^r \ell_{\floor{Nx}}^{(i)} \geq ga \log^2 N, \forall i=1 \dots r, \ell_{\floor{Nx}}^{(i)} >0 \right) \\
& \leq C \prod_{i=1}^r \abs{ \log \left( \frac{\abs{x-x_i}}{C} \vee \frac{1}{N} \right)}. \nonumber
\end{align}
Let $K \Subset \cap_{i=0}^{r-1} D^i$. There exists $C>0$ depending on $K$, such that for all $N$ large enough and $x \in K$,
\begin{align}
\label{eq:lem_bound2}
& \log (N) N^a
\bigotimes_{i=1}^r \prob^{D^i_N}_{Nx_i,Nz_i} \left( \sum_{i=1}^r \ell_{\floor{Nx}}^{(i)} \geq ga \log^2 N, \forall i=1 \dots r, \ell_{\floor{Nx}}^{(i)}>0 \right) \\
& \leq
C \prod_{i=1}^r \abs{ \log \left( \frac{\abs{x-x_i}}{C} \vee \frac{1}{N} \right)}. \nonumber
\end{align}
Moreover, for all $x \in \C$,
\begin{align}
\label{eq:lem_limit_unconditioned}
& \lim_{N \to \infty} \log (N) N^a \bigotimes_{i=1}^r \prob^{D^i_N}_{Nx_i} \left( \sum_{i=1}^r \ell_{\floor{Nx}}^{(i)} \geq ga \log^2 N, \forall i=1 \dots r, \ell_{\floor{Nx}}^{(i)}>0 \right) \\
& = e^{ \frac{c_0 a}{g} } 
\int_{\mathsf{a} \in E(a,r)} d \mathsf{a}
\prod_{k=1}^r \varphi_{x_k}^{D^k,a_k}(x) \nonumber
\end{align}
and
\begin{align}
\label{eq:lem_limit}
& \lim_{N \to \infty} \log (N) N^a \bigotimes_{i=1}^r \prob^{D^i_N}_{Nx_i,Nz_i} \left( \sum_{i=1}^r \ell_{\floor{Nx}}^{(i)} \geq ga \log^2 N, \forall i=1 \dots r, \ell_{\floor{Nx}}^{(i)}>0 \right) \\
& = e^{ \frac{c_0 a}{g} }
\int_{\mathsf{a} \in E(a,r)} d \mathsf{a}
\prod_{k=1}^r \psi_{x_k,z_k}^{D^k,a_k}(x) \nonumber
\end{align}
where $E(a,r)$ is the $(n-1)$-dimensional simplex defined in \eqref{eq:def_simplex}.
\end{lemma}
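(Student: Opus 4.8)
\textbf{Proof proposal for Lemma \ref{lem:1-point}.}

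The plan is to reduce everything to the single-site hitting probabilities of the random walks, then to use a joint-excursion / Markov decomposition of the $r$ walks through the point $\floor{Nx}$. The starting point is the classical fact that, for a single continuous-time random walk in $D^i_N$ started from $\floor{Nx_i}$ (or conditioned to exit at $\floor{Nz_i}$), the local time $\ell^{(i)}_{\floor{Nx}}$ at a fixed point $\floor{Nx}$ is, on the event that the point is visited, an exponential random variable whose mean is $G^{D^i_N}(\floor{Nx},\floor{Nx})$ (resp. a Gamma-type variable in the conditioned case, or one can handle the conditioning by tilting by the harmonic-measure ratio $H^{D^i_N}(\floor{Nx},\floor{Nz_i})/H^{D^i_N}(\floor{Nx_i},\floor{Nz_i})$ after deconditioning). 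The probability that the point is visited at all is, up to a multiplicative error, $G^{D^i_N}(\floor{Nx_i},\floor{Nx})/G^{D^i_N}(\floor{Nx},\floor{Nx})$. Since the $r$ walks are independent and $\ell^{(r)}_{\floor{Nx}}$ is a sum of $r$ independent exponentials each with mean $\approx g\log N + g\log\CR(x,D^i)+c_0$ (by \eqref{eq:lem_Green_limit_on_diagonal}), the event $\{\sum_i \ell^{(i)}_{\floor{Nx}}\ge ga\log^2 N,\ \forall i\ \ell^{(i)}_{\floor{Nx}}>0\}$ has probability that factorises: visit probabilities times the probability that a sum of $r$ i.i.d.\ (asymptotically identical) exponentials of mean $\sim g\log N$ exceeds $ga\log^2N$.

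First I would isolate the ``sum of exponentials'' tail. If $\mathcal E_1,\dots,\mathcal E_r$ are independent exponentials with means $m_1,\dots,m_r$ all $\sim g\log N$, then $\Prob{\sum \mathcal E_i \ge s}$ for $s = ga\log^2 N$ can be computed exactly: it equals $\sum_i \frac{m_i^{r-1}}{\prod_{j\ne i}(m_i-m_j)} e^{-s/m_i}$ when the $m_i$ are distinct, but for the asymptotics it is cleaner to write it as an $(r-1)$-dimensional integral over the simplex: conditionally on the split $(\mathcal E_1,\dots,\mathcal E_r) = (s_1,\dots,s_r)$ with $s_k = a_k\log^2 N\cdot g$, the density contributes $\prod_k \frac{1}{m_k} e^{-s_k/m_k}$, and integrating $s_k$ over $\{\sum s_k \ge s\}$ and changing variables to $a_k = s_k/(g\log^2 N)$ produces precisely the simplex $E(a,r)$ with Lebesgue measure $d\mathsf a$. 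Plugging $m_k = g\log N + g\log\CR(x,D^k)+c_0 + o(1)$ gives $e^{-s_k/m_k} = e^{-a_k\log N}\cdot \CR(x,D^k)^{-a_k} e^{-c_0 a_k/g}(1+o(1))$, and $\prod_k e^{-a_k\log N} = N^{-a}$, $\prod_k e^{-c_0 a_k/g} = e^{-c_0 a/g}$. This is where the prefactor $e^{c_0 a/g}$ in \eqref{eq:lem_limit_unconditioned}–\eqref{eq:lem_limit} comes from (it cancels the $-c_0a/g$), and the single $\log N$ in the normalisation $\log(N)N^a$ absorbs the leftover Jacobian/density factor $\prod_k (1/m_k)\cdot(g\log^2N)^{r-1}\cdot g = (\log N)\cdot(1+o(1))$ after the $r$ factors of $1/m_k \sim 1/(g\log N)$ are multiplied against the $(r-1)$ factors of $g\log^2 N$ from $d s_k = g\log^2 N\, da_k$. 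The visit probabilities contribute $\prod_k G^{D^k_N}(\floor{Nx_k},\floor{Nx})/m_k$, and together with \eqref{eq:lem_Green_limit_off_diagonal} ($G^{D^k_N}(\floor{Nx_k},\floor{Nx})\to g G^{D^k}(x_k,x)$) this yields $\prod_k G^{D^k}(x_k,x)$, giving the $\varphi$ (unconditioned) or $\psi$ (conditioned, after the harmonic-measure tilt and \eqref{eq:lem_Poisson}) products claimed. For the conditioned case, I would decondition using $\prob^{D^i_N}_{Nx_i,Nz_i}(\cdot) = \prob^{D^i_N}_{Nx_i}(\cdot\,;\,X_{\tau}=\floor{Nz_i})/H^{D^i_N}(\floor{Nx_i},\floor{Nz_i})$, note that on the event the point is visited the exit point is independent of the local time at that point (strong Markov property), and pick up the factor $H^{D^i_N}(\floor{Nx},\floor{Nz_i})/H^{D^i_N}(\floor{Nx_i},\floor{Nz_i}) \to H^{D^i}(x,z_i)/H^{D^i}(x_i,z_i)$ via \eqref{eq:lem_Poisson}.

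For the upper bounds \eqref{eq:lem_bound}–\eqref{eq:lem_bound2}, the same decomposition gives, without any asymptotic analysis, that the left side is at most (a constant times) $\log(N)N^a \prod_k \frac{G^{D^k_N}(\floor{Nx_k},\floor{Nx})}{G^{D^k_N}(\floor{Nx},\floor{Nx})} \cdot \Prob{\sum_k\mathcal E_k \ge ga\log^2 N}$ with $\mathcal E_k$ exponential of mean $G^{D^k_N}(\floor{Nx},\floor{Nx})$; using the crude bounds \eqref{eq:lem_Green_upper_bound}–\eqref{eq:lem_Green_lower_bound} ($G^{D^k_N}(\floor{Nx_k},\floor{Nx})\le g\log(N/(|x-x_i|\vee 1))+C$ and $G^{D^k_N}(\floor{Nx},\floor{Nx}) = g\log N + O(1)$ on $K$, or bounded below by a constant in general) one gets the product of logarithmic factors, while the exponential-sum tail is bounded by $C(\log N)^{r-1}N^{-a}\cdot(\text{bounded})$ by the explicit formula above, and the $(\log N)^{r-1}$ combines with the $r$ reciprocals $1/G^{D^k_N}(\floor{Nx},\floor{Nx}) \asymp 1/\log N$ to leave the single $\log N$ cancelling the normalisation — one must be a little careful when $|x-x_i|$ is of order $1/N$ or when $x$ is near $\partial D^i$, which is exactly why the statement writes $|x-x_i|/C \vee 1/N$ and, in \eqref{eq:lem_bound}, does not insist on $x\in K$; in that regime the ratio $G^{D^k_N}(\floor{Nx_k},\floor{Nx})/G^{D^k_N}(\floor{Nx},\floor{Nx})$ is simply bounded by $1$ and one still recovers the bound. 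The main obstacle I anticipate is making the ``exponential with mean $G^{D^k_N}(\floor{Nx},\floor{Nx})$'' statement and the subsequent tail computation fully rigorous and uniform: the local time at a single site of the continuous-time walk is genuinely exponential only if one counts from the first visit and ignores the discrete granularity, and one must control the error between the true distribution of $\ell^{(k)}_{\floor{Nx}}$ (a geometric number of i.i.d.\ exponential sojourns, or equivalently exactly exponential with mean $G^{D^k_N}$ by the standard identity for continuous-time chains) and track all multiplicative $1+o(1)$ errors through the $r$-fold product and the simplex integral so that they do not accumulate — this is routine but is the technical heart, and it is also where the hypothesis $K\Subset\cap_i D^i$ is used to get uniformity of the diagonal Green's function asymptotics \eqref{eq:lem_Green_limit_on_diagonal} and of \eqref{eq:lem_Poisson}.
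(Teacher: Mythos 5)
Your overall strategy is the same as the paper's: factor the event as (product of visit probabilities) $\times$ (tail of a sum of independent exponentials with means $G^{D^i_N}(\floor{Nx},\floor{Nx})$), use the exact identities $\P_{Nx_i}(\ell_x>0)=G^{D^i_N}(Nx_i,x)/G^{D^i_N}(x,x)$ and independence of $\ell_x$ and the exit point, then read off the limit from the diagonal Green's function asymptotics. The paper evaluates the tail by a ``last-variable'' decomposition (it writes the event $\{\sum_i m_i t_i\ge s\}$ as $\{\sum_{i<r}m_i t_i\ge s\}$ plus the event that the $r$-th exponential pushes the partial sum over the threshold, which cleanly isolates the $(r-1)$-dimensional integral), whereas you parametrise the simplex directly; both routes give the same answer, and your description of where the $e^{c_0 a/g}$ and the simplex come from is correct in spirit even if the bookkeeping is a bit loose. (Integrating over $\{\sum s_k\ge s\}$ is $r$-dimensional, not the simplex; and the displayed product $\prod_k(1/m_k)(g\log^2N)^{r-1}\cdot g$ is $\asymp\log^{r-2}N$, not $\log N$. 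These are sketch-level slips, not conceptual errors.)

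There is, however, a genuine gap in your upper bound \eqref{eq:lem_bound} in the regime ``$x$ near $\partial D^k$ but far from $x_k$''. You propose to bound the ratio $G^{D^k_N}(\floor{Nx_k},\floor{Nx})/G^{D^k_N}(\floor{Nx},\floor{Nx})$ by $1$ there, and you lump this together with the case $|x-x_k|\sim 1/N$. But these two situations are not the same. When $|x-x_k|\sim 1/N$ the right-hand side has a factor $\asymp\log N$ from $|\log(|x-x_k|/C\vee 1/N)|$ which absorbs the extra $\log N$ you lose by replacing $G^{D^k_N}(Nx_k,Nx)/G^{D^k_N}(Nx,Nx)\asymp 1/\log N$ with $1$. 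When instead $x$ is close to $\partial D^k$ but $|x-x_k|\asymp 1$, the right-hand side factor $|\log(|x-x_k|/C\vee 1/N)|$ is $O(1)$, so bounding the ratio by $1$ while keeping the crude tail bound $(\log N)^{r-1}N^{-a}$ produces an extra unabsorbed $\log N$ and the claimed inequality fails. The paper sidesteps this entirely with a monotonicity trick: since enlarging $D^i$ can only increase $\P_{Nx_i}^{D^i_N}(\ell_{\floor{Nx}}^{(i)}\ge\cdot)$, one may replace each $D^i$ by a larger domain of comparable diameter in which $x$ sits deep in the bulk, and then the bulk Green's function estimates apply directly. This is the missing ingredient needed to make your upper bound \eqref{eq:lem_bound} uniform over $x\in\C$; the rest of your argument is sound.
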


\begin{proof}[Proof of Lemma \ref{lem:1-point}]
We start by proving \eqref{eq:lem_bound2} and \eqref{eq:lem_limit}. To ease notations, we will write
\[
\prob := \bigotimes_{i=1}^r \prob_{Nx_i,Nz_i}^{D^i_N}.
\]
Let $x \in \Z^2$. We have
\begin{align}
\label{eq:proof_1_point1}
& \prob \left( \sum_{i=1}^r \ell_x^{(i)} \geq ga \log^2 N, \forall i=1 \dots r, \ell_x^{(i)}>0 \right) \\
& = \prod_{i=1}^r \prob_{Nx_i,Nz_i}^{D^i_N} \left( \ell_x^{(i)} >0 \right)
\prob \left( \left. \sum_{i=1}^r \ell_x^{(i)} \geq ga \log^2 N \right\vert \forall i=1 \dots r, \ell_x^{(i)}>0 \right). \nonumber
\end{align}
The Markov property gives that for all $i=1 \dots r$,
\begin{align*}
\prob_{Nx_i,Nz_i}^{D^i_N} \left( \ell_x^{(i)} >0 \right) & = \prob^{D^i_N}_{Nx_i} \left( \ell_x^{(i)} > 0 \right) \frac{\prob^{D^i_N}_x \left( X^{(i)}_{\tau_{\partial D^i_N}} = N z_i \right)}{\prob^{D^i_N}_{Nx_i} \left( X^{(i)}_{\tau_{\partial D^i_N}} = N z_i \right)} \\
& = \frac{G^{D^i_N}(Nx_i,x)}{G^{D^i_N}(x,x)} \frac{H^{D^i_N}(x,Nz_i)}{H^{D^i_N}(Nx_i,Nz_i)}.
\end{align*}
Moreover, under $\prob^{D^i_N}_x$, $\ell_x^{\tau_{\partial D^i_N}}$ is an exponential variable with mean $G^{D^i_N}(x,x)$ which is independent of $X^{(i)}_{\tau_{\partial D^i_N}}$ (see Lemma \ref{lem:app_indep_local_time}). Therefore, conditioning on $X^{(i)}_{\tau_{\partial D^i_N}}$ does not change the law of $\ell_x^{\tau_{\partial D^i_N}}$ and
\begin{align}
\label{eq:proof_lem_1point2}
& \prob \left( \left. \sum_{i=1}^r \ell_x^{(i)} \geq ga \log^2 N \right\vert \forall i=1 \dots r, \ell_x^{(i)}>0 \right) \\
& = \int_{[0,\infty)^r} dt_1 \dots dt_r e^{-\sum_{i=1}^r t_i} \indic{ \sum_{i=1}^r G^{D_N^i}(x,x) t_i \geq ga \log^2 N}. \nonumber
\end{align}
To bound this term from above, we use \eqref{eq:lem_Green_upper_bound} which allows us to bound
\[
\indic{ \sum_{i=1}^r G^{D_N^i}(x,x) t_i \geq ga \log^2 N} \leq \indic{ (g\log N + C) \sum_{i=1}^r t_i \geq ga \log^2 N}
\]
which yields
\begin{equation}
\label{eq:proof_lem_1point}
\prob \left( \left. \sum_{i=1}^r \ell_x^{(i)} \geq ga \log^2 N \right\vert \forall i=1 \dots r, \ell_x^{(i)}>0 \right) \leq C (\log N)^{r-1} N^{-a}.
\end{equation}
\eqref{eq:lem_Green_upper_bound}, \eqref{eq:lem_Green_lower_bound} and \eqref{eq:lem_Poisson} then concludes the proof of \eqref{eq:lem_bound2}. To get \eqref{eq:lem_limit}, we come back to \eqref{eq:proof_lem_1point2} which gives
\begin{align*}
& \prob \left( \left. \sum_{i=1}^r \ell_x^{(i)} \geq ga \log^2 N \right\vert \forall i=1 \dots r, \ell_x^{(i)}>0 \right) \\
& = \prob \left( \left. \sum_{i=1}^{r-1} \ell_x^{(i)} \geq ga \log^2 N \right\vert \forall i=1 \dots r-1, \ell_x^{(i)}>0 \right)
+ e^{-ga \log^2N/G^{D^r_N}(x,x)}
\\
& \times \int_{[0,\infty)^{r-1}} dt_1 \dots dt_{r-1} \exp \left( \sum_{i=1}^{r-1} \left( \frac{G^{D^i_N}(x,x)}{G^{D^r_N}(x,x)} - 1 \right) \ t_i \right)
\indic{ \sum_{i=1}^{r-1} G^{D^i_N}(x,x) t_i \leq ga\log^2N }.
\end{align*}
\eqref{eq:proof_lem_1point} shows that the first right hand side term is at most $C(\log N)^{r-2} N^{-a}$ which is going to be of smaller order than the second term. Using \eqref{eq:lem_Green_limit_on_diagonal} and performing the change of variable $s_i = t_i/ \log N$ shows that when $x = \floor{Ny}$ the second right hand side term is asymptotically equivalent to
\begin{align*}
e^{\frac{c_0 a}{g}} (\log N)^{r-1} N^{-a} \CR(y,D^r)^a \int_{[0,\infty)^{r-1}} ds_1 \dots ds_{r-1} \prod_{i=1}^{r-1} \left( \frac{\CR(y,D^i)}{\CR(y,D^r)} \right)^{s_i} \indic{\sum_{i=1}^{r-1} s_i \leq a}.
\end{align*}
Using \eqref{eq:lem_Poisson}, this shows that
\begin{align*}
& \lim_{N \to \infty} \log(N) N^a \prob \left( \sum_{i=1}^r \ell_{\floor{Ny}}^{(i)} \geq ga \log^2 N, \forall i=1 \dots r, \ell_{\floor{Ny}}^{(i)}>0 \right)
= \prod_{i=1}^r G^{D^i}(x_i,y) \frac{H^{D^i}(y,z_i)}{H^{D^i}(x_i,z_i)} \\
& \times e^{\frac{c_0 a}{g}} \int_{ [0,a]^{r-1} } d s_1 \dots ds_{r-1} \prod_{i=1}^{r-1} \CR(y,D^i)^{s_i} \CR(y,D^r)^{a - \sum_{i=1}^{r-1} s_i} \indic{ \sum_{i=1}^{r-1} s_i < a }
\end{align*}
which proves \eqref{eq:lem_limit}.

We omit the proofs of \eqref{eq:lem_bound} and \eqref{eq:lem_limit_unconditioned} which are very similar and even slightly easier since there is no conditioning to deal with. We nevertheless mention that in \eqref{eq:lem_bound}, we do not need to restrict ourselves to the bulk of the domains (compared to \eqref{eq:lem_bound2}) because the probability increases with the domains. We can thus assume that all the points we consider are deep inside the domains. This finishes the proof.
\end{proof}

We are now ready to prove:

\begin{proposition}[Tightness]\label{prop:tightness}
The sequences
\[
\left( (\mu^{\Dc',a}_{\Xc';N}, \Dc'\Xc' \subset \Dc\Xc), N \geq 1 \right)
\mathrm{~and~}
\left( (\mu^{\Dc',a}_{\Xc',\Zc';N}, \Dc'\Xc'\Zc' \subset \Dc\Xc\Zc), N \geq 1 \right)
\]
are tight for the product topology of, respectively, weak and vague convergence on $\bigcap_{D \in \Dc'} D, \Dc' \subset \Dc$. Moreover, for any Borel set $A \subset \C$,
\begin{align}
\label{eq:prop_limit_first_moment_unconditioned}
& \lim_{N \to \infty} \Expect{ \mu^{\Dc,a}_{\Xc;N}(A)}
= e^{ \frac{c_0 a}{g} } \int_A dx 
\int_{\mathsf{a} \in E(a,r)} d \mathsf{a}
\prod_{k=1}^r \varphi_{x_k}^{D^k,a_k}(x)
\end{align}
and if $A$ is compactly included in $\cap_{i=1}^r D^i$,
\begin{align}
\label{eq:prop_limit_first_moment}
& \lim_{N \to \infty} \Expect{ \mu^{\Dc,a}_{\Xc,\Zc;N}(A)}
= e^{ \frac{c_0 a}{g} } \int_A dx 
\int_{\mathsf{a} \in E(a,r)} d \mathsf{a}
\prod_{k=1}^r \psi_{x_k,z_k}^{D^k,a_k}(x)
\end{align}
where $E(a,r)$ is the $(r-1)$-dimensional simplex defined in \eqref{eq:def_simplex}.
\end{proposition}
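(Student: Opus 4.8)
The plan is to reduce everything to the first-moment estimates of Lemma \ref{lem:1-point}. First I would establish the two limiting-expectation formulas \eqref{eq:prop_limit_first_moment_unconditioned} and \eqref{eq:prop_limit_first_moment}. By definition \eqref{eq:def_multipoint_discrete},
\[
\Expect{ \mu^{\Dc,a}_{\Xc,\Zc;N}(A)} = \frac{\log N}{N^{2-a}} \sum_{x \in \Z^2} \indic{x/N \in A}\, \bigotimes_{i=1}^r \prob^{D^i_N}_{Nx_i,Nz_i}\left( \sum_{i=1}^r \ell_x^{(i)} \geq ga\log^2 N,\ \forall i,\ \ell_x^{(i)}>0 \right),
\]
so that $\Expect{ \mu^{\Dc,a}_{\Xc,\Zc;N}(A)}$ is exactly a Riemann sum, with mesh $1/N$, of the functions appearing in \eqref{eq:lem_bound2} and \eqref{eq:lem_limit}. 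When $A \Subset \cap_i D^i$, the pointwise limit \eqref{eq:lem_limit} together with the uniform domination \eqref{eq:lem_bound2} by the locally integrable function $C\prod_i |\log(|x-x_i|/C \vee 1/N)|$ lets me pass to the limit in the Riemann sum (the logarithmic singularities at the $x_i$ are integrable in $\R^2$, and the sum over the $1/N$-grid of $\frac{1}{N^2}|\log(|x-x_i|\vee 1/N)|$ near $x_i$ is $O(1)$ uniformly in $N$), giving \eqref{eq:prop_limit_first_moment}. The unconditioned case \eqref{eq:prop_limit_first_moment_unconditioned} is identical using \eqref{eq:lem_bound} and \eqref{eq:lem_limit_unconditioned}; here no restriction of $A$ is needed since \eqref{eq:lem_bound} holds on all of $\C$.

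Next I would deduce tightness. For the conditioned measures, fix a relatively compact open set $V \Subset \cap_{i} D^i$ and a smooth test function $f \geq 0$ supported in $V$; then $\Expect{\scalar{\mu^{\Dc,a}_{\Xc,\Zc;N},f}}$ is bounded uniformly in $N$ by the above (the limit exists, hence the sequence is bounded), so $\Prob{\scalar{\mu^{\Dc,a}_{\Xc,\Zc;N},f} > M} \leq C_f/M$ by Markov's inequality. Running this over a countable family of nonnegative $f$'s dense (in a suitable sense) in $C_c(\cap_i D^i)$, and using a diagonal argument, yields a tight family for the vague topology on $\cap_i D^i$; more precisely, since the space of finite measures on each compact $\bar V$ with total mass $\leq M$ is compact in the weak topology, the bound $\Expect{\mu^{\Dc,a}_{\Xc,\Zc;N}(V)} \leq C_V$ for every $V \Subset \cap_i D^i$ gives vague tightness on $\cap_i D^i$ directly. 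For the unconditioned measures, the relevant estimate \eqref{eq:lem_bound} gives $\Expect{\mu^{\Dc,a}_{\Xc;N}(\C)} = \Expect{\mu^{\Dc,a}_{\Xc;N}(\cap_i D^i)} \leq C$ uniformly in $N$ (the integrand in \eqref{eq:lem_bound} vanishes outside $\cap_i D^i$ and the bound $C\prod_i|\log(|x-x_i|/C\vee 1/N)|$ is integrable over the bounded set $\cap_i D^i$), so the total masses are tight and hence the family is tight for the weak topology on $\cap_i D^i$. Tightness of the joint laws of the finite families $(\mu^{\Dc',a}_{\Xc',\Zc';N})_{\Dc'\Xc'\Zc' \subset \Dc\Xc\Zc}$ for the product topology then follows because a finite product of tight families is tight.

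The main obstacle, such as it is, is the interchange of limit and sum in the Riemann-sum step near the singular points $x_i$: one must check that the contribution of the grid points within a small ball of $x_i$ is negligible uniformly in $N$, which is where the explicit domination \eqref{eq:lem_bound2} (resp. \eqref{eq:lem_bound}) by $\prod_i |\log(|x-x_i|/C \vee 1/N)|$ is essential — the cutoff at scale $1/N$ exactly matches the grid spacing and keeps the discrete sum comparable to the (convergent) continuum integral. Everything else is routine: the pointwise convergence is Lemma \ref{lem:1-point}, and tightness for vague/weak convergence is the standard consequence of uniformly bounded first moments on relatively compact sets.
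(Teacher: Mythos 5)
Your proposal is correct and follows essentially the same route as the paper: reduce both claims to the first-moment estimates of Lemma \ref{lem:1-point}, obtaining the limiting expectations by a dominated-convergence/Riemann-sum argument from \eqref{eq:lem_limit}--\eqref{eq:lem_limit_unconditioned} with the uniform dominations \eqref{eq:lem_bound}--\eqref{eq:lem_bound2}, and deducing tightness from the resulting uniform bounds on $\Expect{\mu^{\Dc',a}_{\Xc';N}(\C)}$ and $\Expect{\mu^{\Dc',a}_{\Xc',\Zc';N}(K)}$. You spell out the Riemann-sum and singularity-control steps more explicitly than the paper's one-line invocation of dominated convergence, but the content is the same.
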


\begin{proof}[Proof of Proposition \ref{prop:tightness}]
To prove the desired tightness, it is enough to show that for all $\Dc'\Xc' \subset \Dc\Xc$ and $\Dc'\Xc'\Zc' \subset \Dc\Xc\Zc$ and $K \Subset \bigcap_{D \in \Dc'} D$, the sequences of real-valued random variables
\[
\left( \mu_{\Xc';N}^{\Dc',a}(\C), N \geq 1 \right)
\mathrm{~and~}
\left( \mu_{\Xc',\Zc';N}^{\Dc',a}(K), N \geq 1 \right)
\]
are tight. This is a direct consequence of Lemma \ref{lem:1-point}: \eqref{eq:lem_bound} and \eqref{eq:lem_bound2} show that
\[
\Expect{\mu_{\Xc';N}^{\Dc',a}(\C)}
\mathrm{~and~}
\Expect{\mu_{\Xc',\Zc';N}^{\Dc',a}(K)}
\]
are uniformly bounded in $N$.
\eqref{eq:prop_limit_first_moment_unconditioned} and \eqref{eq:prop_limit_first_moment} follow from dominated convergence theorem and \eqref{eq:lem_limit_unconditioned} and \eqref{eq:lem_limit} respectively.
\end{proof}

\subsection{Study of the subsequential limits}\label{sec:identification}

As described in Section \ref{subsec:organisation}, we start by showing that we can extract a subsequence such that the convergence holds for all domains and starting/stopping points at the same time.
The difficulty lies in the fact that we consider uncountably many sequences.

\begin{lemma}\label{lem:uncountable_extraction}
Let $(N_k, k \geq 1)$ be an increasing sequence of integers. There exists a subsequence $(N'_k, k \geq 1)$ of $(N_k, k \geq 1)$ such that for all $\Dc'\Xc'\Zc' \in \Sc$,
\[
(\mu_{\Xc,\Zc;N_k'}^{\Dc,a}, \Dc\Xc\Zc \subset \Dc'\Xc'\Zc')
\]
converges as $k \to \infty$ in distribution, relative to the product topology of vague convergence on $\bigcap_{D \in \Dc} D$, $\Dc \subset \Dc'$.
\end{lemma}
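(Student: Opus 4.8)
\textbf{Proof plan for Lemma \ref{lem:uncountable_extraction}.}

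The plan is to reduce the uncountable family of marginals to a countable one via a separability/continuity argument, extract a subsequence for the countable family by a diagonal procedure together with Prokhorov's theorem (which applies thanks to the tightness of Proposition \ref{prop:tightness}), and then transfer the convergence to all of $\Sc$ by approximating arbitrary $\Dc\Xc\Zc$ by members of the countable collection.

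First I would fix a countable dense subcollection $\Sc_0 \subset \Sc$: say, all $\Dc\Xc\Zc = \{(D_i,x_i,z_i), i=1\dots r\}$ where each $D_i$ ranges over a countable family of nice domains that is ``rich enough'' (e.g. finite unions of dyadic-vertex polygons with analytic corners smoothed, or images of the disc under a countable dense set of conformal maps), the $x_i$ are rational points, and the $z_i$ are in a countable dense set of nice boundary points with the $z_i$ pairwise distinct. For each element of $\Sc_0$ the associated family $(\mu^{\Dc,a}_{\Xc,\Zc;N}, \Dc\Xc\Zc \subset \Dc'\Xc'\Zc')$ is a finite vector of random measures, and by Proposition \ref{prop:tightness} each is tight (for vague convergence on the relevant intersections, which are Polish); hence by Prokhorov's theorem and a standard diagonal argument over the countable set $\Sc_0$, I can extract a single subsequence $(N_k')$ along which $(\mu^{\Dc,a}_{\Xc,\Zc;N_k'}, \Dc\Xc\Zc \subset \Dc'\Xc'\Zc')$ converges in distribution, simultaneously for every $\Dc'\Xc'\Zc' \in \Sc_0$.

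Next I would promote this to all of $\Sc$. Given an arbitrary $\Dc'\Xc'\Zc' \in \Sc$, I approximate it from inside and outside by elements of $\Sc_0$: choose sequences $\Dc^{(m),-}\Xc^{(m),-}\Zc^{(m),-}$ and $\Dc^{(m),+}\Xc^{(m),+}\Zc^{(m),+}$ in $\Sc_0$ with domains $D_i^{(m),-} \Subset D_i \Subset D_i^{(m),+}$ increasing/decreasing to $D_i$, starting points and exit points converging appropriately. The key analytic input is the first-moment control: by the explicit limiting formula \eqref{eq:prop_limit_first_moment} and \eqref{eq:lem_limit}, the expected mass $\Expect{\mu^{\Dc,a}_{\Xc,\Zc;N}(A)}$ is, in the limit, the integral of an explicit continuous-in-parameters density, so the limiting first moments of the approximants converge to those of $\Dc'\Xc'\Zc'$ as $m \to \infty$; combined with monotonicity of the measures in the domains (adding trajectory range can only create more thick points, after the appropriate conditioning bookkeeping) and the fact that each single $\mu^{\Dc,a}_{\Xc,\Zc;N}$ is dominated in mean uniformly, this forces the subsequential limits along $(N_k')$ of the approximants to sandwich, hence pin down, a unique limit for $(\mu^{\Dc,a}_{\Xc,\Zc;N_k'}, \Dc\Xc\Zc \subset \Dc'\Xc'\Zc')$. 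Uniqueness of the sandwiched limit then yields convergence in distribution along the full subsequence $(N_k')$ for the arbitrary $\Dc'\Xc'\Zc'$, via the standard criterion that a tight sequence with a unique subsequential limit converges.

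The main obstacle I anticipate is making the approximation/monotonicity step genuinely rigorous: unlike in a single domain, here one must handle the \emph{joint} law of the whole finite vector indexed by subsets $\Dc\Xc\Zc \subset \Dc'\Xc'\Zc'$, and domain monotonicity for the multipoint measures is not literally an inequality between measures but must be routed through the decomposition into pieces (as in Property \ref{charac2} / \eqref{eq:decomposition}) and a coupling of the random walks in nested domains. In practice one does not need exact monotonicity: it suffices to show that the $L^1$-distance between the masses of the approximants and of $\mu^{\Dc,a}_{\Xc,\Zc;N}$ is uniformly (in $N$) small as the approximation improves, which again follows from the uniform first-moment estimates of Lemma \ref{lem:1-point} applied to the symmetric difference of the domains, together with the continuity of the limiting densities $\psi^{D,a}_{x_0,z}$ in the domain (via the conformal-map formulas \eqref{eq:conformal_radius}--\eqref{eq:Poisson_conformal}). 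I would also need to be slightly careful that vague convergence on the varying intersections $\bigcap_{D\in\Dc} D$ is handled consistently, by fixing for each $\Dc'\Xc'\Zc'$ an exhaustion by compacts and testing against continuous compactly supported functions therein.
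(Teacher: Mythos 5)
Your overall architecture — fix a countable collection, apply Prokhorov plus a diagonal extraction, then extend to all of $\Sc$ by approximation — is the right shape, and your $L^1$-approximation idea for the domains is essentially the paper's Lemma \ref{lem:elementary}. But there is a genuine gap in the extension step: the exit points $z_i$ live on $\partial D_i$, and when you approximate $D_i$ from inside or outside by $D_i^{(m),\pm}$, those are \emph{different} boundaries. Your sketch says ``exit points converging appropriately,'' but there is no direct way to relate the event $\{X_{\tau_{\partial D_N}}=\lfloor Nz\rfloor\}$ to the corresponding event for a nearby domain, so the conditioning cannot be approximated this way. This is precisely the difficulty the paper's proof is engineered to circumvent, and it is the crux of the lemma.

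The paper resolves this by changing what the countable collection is and what is extracted. The countable set $\Sc'$ consists of domains in $\mathfrak{D}$ (finite unions of rational discs) and \emph{rational starting points only}, with no exit points; the diagonal extraction is performed on the \emph{unconditioned} measures $\mu^{\Dc,a}_{\Xc;N}$, and — crucially — the extraction is arranged so that the measures converge \emph{jointly with the time-space rescaled random walk trajectories}. To recover the conditioned measure for an arbitrary nice $D$, $x_0$, $z$, the paper approximates $D$ from inside by $D^p\in\mathfrak{D}$ and writes the Radon–Nikodym derivative $d\P^{D_N}_{Nx_0,Nz}/d\P^{D_N}_{Nx_0}$ on $\Fc_{\tau_{\partial D^p_N}}$ as $H_N(X_{\tau^N_{D^p}},Nz)/H_N(Nx_0,Nz)$, i.e.\ a function of the walk's exit point from $D^p$. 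Because the pair $(\langle\mu^{D^p,a}_{x_0;N_k'},f\rangle,\,X_{\tau^{N_k'}_{D^p}}/N_k')$ converges jointly, and this factor is (by Lemma \ref{lem:Poisson}) uniformly approximated by a continuous function of the limiting exit point, the Laplace transform of $\mu^{D^p,a}_{x_0,z;N_k'}$ converges; the passage $D^p\uparrow D$ is then exactly the ``uniformly small first-moment error'' reduction, formalised as Lemma \ref{lem:elementary}. Your monotonicity heuristic (``adding trajectory can only create more thick points'') is also unreliable for the conditioned measures since the underlying law changes with the domain; the paper instead uses monotonicity of the Laplace transform in $p$, which holds trivially because $D^p$ is increasing. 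Without the joint convergence with the paths, there is no mechanism to implement the conditioning on the exit point in the limit, and this is the idea your plan is missing.
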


Before proving this result, we state an elementary lemma for ease of reference:

\begin{lemma}\label{lem:elementary}
Let $(X_k, k \geq 1)$ be a sequence of random variables. Assume that for all $k \geq 1$ and $p \geq 1$, $X_k$ can be written as
$X_k = Y_{k,p} + Z_{k,p}$
where $Y_{k,p}$ and $Z_{k,p}$ are two non-negative random variables defined on the same probability space.
Assume further that for all $\lambda>0$, 
\[
\lim_{k \to \infty} \Expect{e^{-\lambda Y_{k,p}}}
\mathrm{~and~}
\lim_{p \to \infty} \lim_{k \to \infty} \Expect{e^{-\lambda Y_{k,p}}}
\]
exist and that for all $p \geq 1$, $\sup_{k \geq 1} \Expect{Y_{k,p}} < \infty$ and $\sup_{k \geq 1} \Expect{Z_{k,p}} \to 0$ when $p \to \infty$. Then $(X_k,k \geq 1)$ converges in distribution.
\end{lemma}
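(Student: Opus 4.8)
The plan is to reduce convergence in distribution of the non-negative sequence $(X_k)$ to pointwise convergence of its Laplace transforms $k \mapsto \Expect{e^{-\lambda X_k}}$, $\lambda>0$, and then to verify that the limiting function is the Laplace transform of a genuine probability measure (i.e.\ that no mass escapes to infinity). The starting point is the elementary bound: since $X_k = Y_{k,p} + Z_{k,p}$ with $Z_{k,p}\geq 0$,
\[
\abs{ e^{-\lambda X_k} - e^{-\lambda Y_{k,p}} } = e^{-\lambda Y_{k,p}} \left( 1 - e^{-\lambda Z_{k,p}} \right) \leq 1 - e^{-\lambda Z_{k,p}} \leq \lambda Z_{k,p},
\]
so that $\abs{ \Expect{e^{-\lambda X_k}} - \Expect{e^{-\lambda Y_{k,p}}} } \leq \lambda \Expect{Z_{k,p}}$ for every $k$ and $p$.

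Fix $\lambda>0$ and choose $p$ large enough that $\delta_p := \sup_{k\geq 1}\Expect{Z_{k,p}} < \infty$ (possible since $\delta_p \to 0$). Using that $\lim_{k\to\infty}\Expect{e^{-\lambda Y_{k,p}}}$ exists by hypothesis, the bound above yields
\[
\limsup_{k\to\infty}\Expect{e^{-\lambda X_k}} - \liminf_{k\to\infty}\Expect{e^{-\lambda X_k}} \leq 2\lambda\delta_p .
\]
Letting $p\to\infty$ shows that $\lim_{k\to\infty}\Expect{e^{-\lambda X_k}}$ exists; call it $L(\lambda)$ (comparing again with $\lim_k\Expect{e^{-\lambda Y_{k,p}}}$ also identifies $L(\lambda)$ with the iterated limit $\lim_p\lim_k\Expect{e^{-\lambda Y_{k,p}}}$, although this is not needed). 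For the no-escape-of-mass step, fix again $p$ with $\delta_p<\infty$; then $M := \sup_k\Expect{X_k} \leq \sup_k\Expect{Y_{k,p}} + \delta_p < \infty$ by hypothesis, whence $L(\lambda) \geq \lim_k\bigl(1-\lambda\Expect{X_k}\bigr) \geq 1-\lambda M$, so $L(\lambda)\to 1$ as $\lambda\downarrow 0$. By the continuity theorem for Laplace transforms (equivalently: Markov's inequality $\Prob{X_k > R}\leq M/R$ makes $(X_k)$ tight, and pointwise convergence of Laplace transforms forces every subsequential weak limit to have Laplace transform $L$, hence all subsequential limits coincide), $(X_k)$ converges in distribution to the probability law on $[0,\infty)$ with Laplace transform $L$.

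There is no serious obstacle here; this lemma is a bookkeeping device. The only points requiring a little care are choosing $p$ large enough to make $\delta_p$ finite \emph{before} applying any estimate, keeping the order of limits ($k\to\infty$ inside, $p\to\infty$ outside) consistent in the $\limsup/\liminf$ sandwich, and invoking the uniform first-moment bound to exclude loss of mass at infinity, so that the limiting Laplace transform corresponds to a probability law rather than a sub-probability law.
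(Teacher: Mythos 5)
Your proof is correct and essentially the same as the paper's: both establish tightness from the uniform first-moment bound and then show pointwise convergence of the Laplace transforms by squeezing $\Expect{e^{-\lambda X_k}}$ between $\Expect{e^{-\lambda Y_{k,p}}}$ and $\Expect{e^{-\lambda Y_{k,p}}} - \lambda\Expect{Z_{k,p}}$. Your single absolute-value bound $|e^{-\lambda X_k}-e^{-\lambda Y_{k,p}}|\leq\lambda Z_{k,p}$ is a cosmetic repackaging of the paper's two one-sided inequalities, and your explicit check that $L(\lambda)\to 1$ as $\lambda\downarrow 0$ is just an alternative phrasing of the tightness step.
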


\begin{proof}[Proof of Lemma \ref{lem:elementary}]
As $\sup_{k \geq 1} \Expect{X_k} < \infty$, $(X_k, k \geq 1)$ is tight. To show that it converges, it is thus enough to show the pointwise convergence of the Laplace transform.
Take $\lambda >0$.
Since $Z_{k,p}$ is non-negative,
\[
\Expect{e^{-\lambda X_k}} \leq \Expect{e^{-\lambda Y_{k,p}}}
\]
and
\[
\limsup_{k \to \infty} \Expect{e^{-\lambda X_k}} \leq \lim_{k \to \infty} \Expect{e^{-\lambda Y_{k,p}}} \xrightarrow[p \to \infty]{} \lim_{p \to \infty} \lim_{k \to \infty} \Expect{e^{-\lambda Y_{k,p}}}.
\]
On the other hand,
\[
\Expect{e^{-\lambda X_k}} - \Expect{e^{-\lambda Y_{k,p}}} = - \Expect{e^{-\lambda Y_{k,p}} \left( 1 - e^{-\lambda Z_{k,p}} \right)}
\geq - \lambda \Expect{Z_{k,p}}
\]
and
\[
\liminf_{k \to \infty} \Expect{e^{-\lambda X_k}} \geq \lim_{k \to \infty} \Expect{e^{-\lambda Y_{k,p}}} - \lambda \sup_{k \geq 1} \Expect{Z_{k,p}} \xrightarrow[p \to \infty]{} \lim_{p \to \infty} \lim_{k \to \infty} \Expect{e^{-\lambda Y_{k,p}}}.
\]
We have shown that $\Expect{e^{-\lambda X_k}}, k \geq 1,$ converges to $\lim_{p \to \infty} \lim_{k \to \infty} \Expect{e^{-\lambda Y_{k,p}}}$ which concludes the proof.
\end{proof}

\begin{proof}[Proof of Lemma \ref{lem:uncountable_extraction}]
In this proof, the topologies associated to the unconditioned (resp. conditioned) measures will be the topology of weak convergence (resp. vague convergence) on the underlying domain. We will denote by $\mathfrak{D}$ the collection of simply connected domains that can be written as a finite union of discs with rational centres and radii and
\[
\Sc' := \bigcup_{r \geq 1} \left\{ \{ (D_i,x_i), i=1 \dots r \}: \forall i=1 \dots r, D_i \in \mathfrak{D}, x_i \in D_i \cap \Q^2 \right\}.
\]
Notice that $\Sc'$ is countable.

Let $\Dc\Xc \in \Sc$.
By Proposition \ref{prop:tightness}, the sequence $(\mu^{\Dc,a}_{\Xc;N_k}, k \geq 1)$ is tight.
Denote by $(X^{D}_{x;N_k}(t), 0 \leq t \leq \tau^{N_k}_{D,x}), (D,x) \in \Dc\Xc$, the associated random walks, i.e. independent trajectories sampled according to $\P^{D_{N_k}}_x$. The sequence $\left( N_k^{-1} X^D_{x;N_k}(N_k^2 t), t \leq N_k^{-2} \tau^{N_k}_{D,x} \right)_{(D,x) \in \Dc\Xc}, k \geq 1,$ is also tight since it converges to independent Brownian motions.
Hence, by Cantor's diagonal argument, we can extract a subsequence of $(N_k, k \geq 1)$ (that we still denote $(N_k, k \geq 1)$ in the following) such that for all $\Dc'\Xc' \in \Sc'$, the joint distribution
\begin{equation}
\label{eq:proof_lem_uncountable}
\left( \mu^{\Dc,a}_{\Xc;N_k}, \left( N_k^{-1} X^D_{x;N_k}(N_k^2 t), t \leq N_k^{-2} \tau^{N_k}_{D,x} \right)_{(D,x) \in \Dc\Xc} \right), \Dc\Xc \subset \Dc'\Xc',
\end{equation}
converges as $k \to \infty$.

We will conclude the proof with the following two steps.
\begin{enumerate}
\item[\textit{(i)}]
We will first fix $D_i \in \mathfrak{D}, i =1 \dots r$ and show that the fact that for all $x_i \in D_i \cap \Q^2, i=1 \dots r$, \eqref{eq:proof_lem_uncountable} converges with $\Dc'\Xc' = \{(D_i,x_i)\}$ implies the same statement for all $x_i \in D_i, i=1 \dots r$.
\item[\textit{(ii)}]
We will then fix nice domains $D_i$ and initial points $x_i \in D_i, i=1 \dots r$, and we will show that the fact that for all $D_i' \in \mathfrak{D}$ containing $x_i, i=1 \dots r$, \eqref{eq:proof_lem_uncountable} converges with $\Dc'\Xc' = \{(D_i',x_i)\}$ implies that for all pairwise distinct nice points $z_i \in \partial D_i$ and $\Dc'\Xc'\Zc' = \{ (D_i,x_i,z_i) \}$,
$\left( \mu^{\Dc,a}_{\Xc,\Zc;N_k}, \Dc\Xc\Zc \subset \Dc'\Xc'\Zc' \right)$
converges as $k \to \infty$. 
\end{enumerate}
We will only prove \textit{(ii)} since \textit{(i)} is very similar. See the end of the proof for a few comments about the step \textit{(i)} above. To ease notations, we will moreover only prove \textit{(ii)} for $r=1$. The general case $r \geq 1$ follows along the same lines by considering multivariate Laplace transforms.

Let $D$ be a nice domain, $x_0 \in D$ and $z \in \partial D$ be a nice point. Let $(X_t)_{t \geq 0}$ be the associated random walk. We assume that we already know that for all $D' \in \mathfrak{D}$ containing $x_0$, the joint distribution of
\[
\mu^{D',a}_{x_0;N_k}, \left( N_k^{-1} X_{N_k^2 t}, t \leq N_k^{-2} \tau_{D'}^{N_k} \right)
\]
converges as $k \to \infty$ and we want to show the convergence of $\mu^{D,a}_{x_0,z;N_k}, k \geq 1$. Let $f \in C_c(D, [0,\infty))$. Our objective is to show that $\scalar{\mu^{D,a}_{x_0,z;N_k},f}, k \geq 1,$ converges in law. Let $p \geq 1$ and consider $D^p \in \mathfrak{D}$ such that
\[
\{ x \in D: \mathrm{dist}(x,\partial D) \geq 2^{-p} \} \subset D^p \subset \{ x \in D: \mathrm{dist}(x,\partial D) \geq 2^{-p-1} \}.
\]
In the following, we will consider the measure $\mu_{x_0,z;N}^{D^p,a}$ which is defined as $\mu_{x_0;N}^{D^p,a}$ but under the conditional probability $\prob_{Nx_0,Nz}^{D_N^p}$ instead of $\prob_{Nx_0}^{D^p_N}$.
$(B_t, t \leq \tau_{\partial D^p})$ under $\prob^D_{x_0}$ and $(B_t, t \leq \tau_{\partial D^p})$ under $\prob^D_{x_0,z}$ are mutually absolutely continuous: if $\Fc_{\tau_{\partial D^p}}$ denotes the $\sigma$-algebra generated by $(B_t, t \leq \tau_{\partial D^p})$, we have (see \cite{AidekonHuShi2018} (2.7) for instance)
\[
\frac{d \prob^D_{x_0,z}}{d \prob^D_{x_0}} \Big\vert_{\Fc_{\tau_{\partial D^p}}} = \frac{H_D(B_{\tau_{\partial D^p}},z)}{H_D(x_0,z)} =: \Hc.
\]
Similarly (direct consequence of Markov property),
\begin{equation}
\label{eq:121b}
\frac{d \prob^{D_N}_{Nx_0,Nz}}{d \prob^{D_N}_{Nx_0}} \Big\vert_{\Fc_{\tau_{\partial D^p_N}}} = \frac{H_N(X_{\tau_{D^p}^N},Nz)}{H_N(Nx_0,Nz)} =: \Hc_N.
\end{equation}
Hence the convergence of $ \left(\scalar{\mu^{D^p,a}_{x_0;N_k},f}, X_{\tau_{D^p}^{N_k}}/N_k \right), k \geq 1$, implies the convergence of
\newline
$\scalar{\mu^{D^p,a}_{x_0,z;N_k},f}$, $k \geq 1$: by Lemma \ref{lem:Poisson}, for all $\alpha>0$ and $k$ large enough,
\begin{align*}
\Expect{\exp \left( - \scalar{\mu^{D^p,a}_{x_0,z;N_k},f} \right)}
& =
\Expect{\Hc_{N_k}\exp \left( - \scalar{\mu^{D^p,a}_{x_0;N_k},f} \right)} \\
& \leq
\Expect{ \left( \frac{H_D \left( X_{\tau_{D^p}^{N_k}}/N_k , z \right)}{H_D(x_0,z)} + \alpha \right) \exp \left( - \scalar{\mu^{D^p,a}_{x_0;N_k},f} \right)} \\
& \xrightarrow[k \to \infty]{} \Expect{(\Hc+\alpha) \exp \left( - \scalar{\mu^{D^p,a}_{x_0},f} \right)}
\end{align*}
and
\[
\limsup_{k \to \infty} \Expect{\exp \left( - \scalar{\mu^{D^p,a}_{x_0,z;N_k},f} \right)} \leq \Expect{\Hc \exp \left( - \scalar{\mu^{D^p,a}_{x_0},f} \right)}.
\]
We obtain similarly that the liminf is bounded from below by the above right hand side term implying that $\Expect{\exp \left( - \scalar{\mu^{D^p,a}_{x_0,z;N_k},f} \right)}$ converges as $k \to \infty$.
Since $D^p, p \geq 1,$ is an increasing sequence of domains, for all $k \geq 1$, $\Expect{\exp \left( - \scalar{\mu^{D^p,a}_{x_0,z;N_k},f} \right)}$ is non-increasing with $p$. Hence
\[
\lim_{k \to \infty} \Expect{\exp \left( - \scalar{\mu^{D^p,a}_{x_0,z;N_k},f} \right)}
\]
converges when $p \to \infty$.
By Lemma \ref{lem:1-point}, we also notice that for all $N \geq 1$ and $p \geq 1$,
\[
0 \leq \Expect{ \scalar{\mu_{x_0,z;N}^{D,a},f}} - 
\Expect{ \scalar{\mu_{x_0,z;N}^{D^p,a},f}} \leq o_{p \to \infty}(1).
\]
By lemma \ref{lem:elementary}, it implies that $ \scalar{\mu_{x_0,z;N_k}^{D,a},f}, k \geq 1,$ converges in distribution. This concludes the proof of the step \textit{(ii)}.

We finish this proof with a comment about the step \textit{(i)}. The proof is very similar. One would need to first stop the walks at the first hitting times of small discs centred at the starting points $x_i$. One would need to argue that the main contribution comes from the rest of the trajectories which converge by an $h$-transform-type of argument as above. We leave the details to the reader.
\end{proof}

As mentioned in Section \ref{subsec:organisation}, to prove that the subsequential limits satisfy Properties \ref{charac1} and \ref{charac3}, we need the following result which is proven in Section \ref{sec:uniform integrability}:

\begin{proposition}[Uniform integrability]\label{prop:uniform_integrability}
For all $\Dc\Xc\Zc \in \Sc$ and $K \Subset \bigcap_{D \in \Dc} D$, 
\[
\left( \mu_{\Xc;N}^{\Dc,a}(\C), N \geq 1 \right)
\mathrm{~and~}
\left( \mu^{\Dc,a}_{\Xc,\Zc;N}(K), N \geq 1 \right)
\]
are uniformly integrable. Moreover, any subsequential limit $\mu^{\Dc,a}_{\Xc,\Zc}$ of $\left( \mu^{\Dc,a}_{\Xc,\Zc;N}, N \geq 1 \right)$ satisfies: almost surely for all Borel set $A$ of Hausdorff dimension less than $2-a$, $\mu^{\Dc,a}_{\Xc,\Zc}(A) = 0$.
\end{proposition}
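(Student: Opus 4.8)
The plan is to prove two separate statements: first the uniform integrability of the total masses, and second the vanishing of subsequential limits on sets of small Hausdorff dimension. Both will follow from a \emph{truncated second moment estimate} in the spirit of \cite{jegoGMC}, and the second statement follows from the first once one has good control on the first moment on balls of small radius.

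\textbf{Uniform integrability.} Since we already know from Lemma \ref{lem:1-point} that the first moments of $\mu_{\Xc;N}^{\Dc,a}(\C)$ and $\mu_{\Xc,\Zc;N}^{\Dc,a}(K)$ are uniformly bounded, uniform integrability will follow from an $L\log L$-type bound, or more conveniently from a bound of the form $\sup_N \Expect{\mu_{\Xc;N}^{\Dc,a}(\C)^{1+\delta}} < \infty$ for some $\delta>0$ — but the $L^2$ norm blows up in the subcritical regime near $a=2$, so instead I would run a truncation argument. Introduce a ``good event'' $\Gc_x$ for a point $x$ to be thick, requiring that along the dyadic scales between $1/N$ and order $1$ the local times $\ell_x^{(i)}$ accumulate at a controlled (roughly linear-in-scale) rate, avoiding early over-thickness. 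Write $\mu_{\Xc,\Zc;N}^{\Dc,a,\mathrm{good}}$ for the corresponding restricted measure. Then: (a) the ``bad'' part has first moment tending to $0$, using the standard ballot/barrier estimate for the exponential local-time random walk, exactly as in the second-moment computation of \cite{jegoGMC}; and (b) the good part has a \emph{bounded second moment}, $\sup_N \Expect{(\mu_{\Xc,\Zc;N}^{\Dc,a,\mathrm{good}}(K))^2} < \infty$. For (b) one expands the square as a double sum over $x,y \in \Z^2$, conditions on the value $|x-y|$, and uses the Markov property to decompose each walk's local time at $x$ and at $y$ into the portion before hitting the other point and after; the good event precisely kills the logarithmic divergence coming from pairs at distance $\ll $ (the relevant scale), leaving a convergent integral $\int \prod_i |\log|x-x_i||^{?} |x-y|^{-a'} dx\,dy$ with $a' < 2$. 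Uniform integrability of a sequence that is the sum of an $L^1$-null part and an $L^2$-bounded part is immediate.

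\textbf{Vanishing on low-dimensional sets.} Let $A$ have Hausdorff dimension $<2-a$. Cover $A$ by balls $B(y_j,\rho_j)$ with $\sum_j \rho_j^{2-a-\eta} \le 1$ for some small $\eta>0$ (possible by definition of Hausdorff dimension, after passing to a dimension strictly between $\dim_H A$ and $2-a$). From Lemma \ref{lem:1-point}, for $N$ large, $\Expect{\mu_{\Xc,\Zc;N}^{\Dc,a}(B(y,\rho))} \le C \rho^{2} \cdot \rho^{-a} \cdot (\text{polylog factors uniform away from the }x_i) = C\rho^{2-a}$ times a bounded factor, uniformly in $N$ and in $y$ ranging over a compact subset of $\cap_i D^i$ (the bound $\psi^{D,a}$-type integrand is bounded there). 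Hence $\Expect{\sum_j \mu_{\Xc,\Zc;N}^{\Dc,a}(B(y_j,\rho_j))} \le C \sum_j \rho_j^{2-a} \le C \sup_j \rho_j^{\eta}$, which can be made arbitrarily small by refining the cover. By Fatou / weak convergence along the subsequence, $\Expect{\mu_{\Xc,\Zc}^{\Dc,a}(A)} \le \liminf_N \Expect{\mu_{\Xc,\Zc;N}^{\Dc,a}(A_\delta)}$ for a slight open thickening $A_\delta$ of $A$ (still coverable with small total $\rho^{2-a-\eta}$-content), which is arbitrarily small; so $\mu_{\Xc,\Zc}^{\Dc,a}(A)=0$ a.s. One then upgrades ``for each fixed $A$'' to ``simultaneously for all low-dimensional $A$'' by noting that it suffices to check it for a countable exhausting family or, more simply, by observing that the statement ``$\mu(A)=0$ for all $A$ with $\dim_H A < 2-a$'' is equivalent to ``$\mu$ gives full mass to a set of dimension $\ge 2-a$'', and the monotone class / outer-regularity argument reduces it to countably many balls. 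Care is needed near the source points $x_i$ where the $|\log|x-x_i||$ factors appear, but these are integrable against $\rho^{2-a}$ and contribute only harmlessly.

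\textbf{Main obstacle.} The crux is the bounded second moment of the truncated good measure, (b) above: it requires carefully designing the good event $\Gc_x$ so that it simultaneously (i) captures almost all the first-moment mass and (ii) enforces enough of a barrier that the two-point function decays like $|x-y|^{-a-o(1)}$ rather than the naive $|x-y|^{-a}|\log|x-y||^{\text{power}}$, in the whole subcritical range $a \in (0,2)$ and in the multipoint setting where several independent walks' local times add up. This is technically the heart of \cite{jegoGMC}'s method; the multipoint extension forces one to track, for the pair $(x,y)$, which subset of the $r$ trajectories visits $x$ before $y$ (or vice versa, or neither), and to sum a simplex-type integral over the allocation of thickness $a = \sum a_i$ among them — the bookkeeping is heavier than in the one-trajectory case but introduces no new analytic difficulty. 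I would isolate this estimate as a lemma, prove it first for $r=1$ following \cite{jegoGMC} essentially verbatim, and then indicate the (routine but notation-heavy) modifications for general $r$, deferring the full details to Section \ref{sec:uniform integrability} as the paper announces.
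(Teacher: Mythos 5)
For the uniform integrability part, your strategy (introduce a good event $\Gc_x$, show the bad part has vanishing first moment, show the good part has bounded second moment) is essentially the paper's. One cosmetic difference: the paper's good event $G_N^{b,\eps}(x)$ bounds the \emph{number of excursions} $A_N(x\to R)$ rather than the local times themselves across scales, which the paper notes is a slight simplification of the \cite{jegoGMC} machinery; your formulation works too but requires the same ballot/barrier estimates either way. You also omit the small but necessary step for the \emph{conditioned} measure: the paper restricts to an interior domain $D^r$ where the Radon--Nikodym derivative $d\P^{D_N}_{Nx_0,Nz}/d\P^{D_N}_{Nx_0}|_{\Fc_{\tau_{\partial D^r_N}}}$ is bounded above and below, so that uniform integrability of the unconditioned masses transfers to the conditioned ones. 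This is routine once noticed, but it is needed.

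The carrying-dimension part of your proposal has a genuine gap. Your covering argument proves $\Expect{\mu^{\Dc,a}_{\Xc,\Zc}(A)}=0$ for each \emph{fixed, deterministic} Borel set $A$ of low Hausdorff dimension, which indeed gives $\mu^{\Dc,a}_{\Xc,\Zc}(A)=0$ a.s.\ for that fixed $A$. But the proposition asserts this holds \emph{almost surely, simultaneously for all} such $A$ — including random sets built from $\mu$ itself. Your proposed upgrades do not work: there is no countable exhausting family of low-dimensional sets, the claimed equivalence with ``$\mu$ gives full mass to a set of dimension $\ge 2-a$'' is false (a measure can charge a set of full mass of high dimension and still put positive mass on a lower-dimensional subset of it, as with Lebesgue measure plus a Dirac mass), and a monotone-class reduction to countably many balls cannot capture the Hausdorff dimension of an arbitrary Borel set. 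The standard — and the paper's — resolution is to prove an \emph{energy estimate}
\[
\sup_{N\ge 1}\Expect{\int_{\C^2}\frac{1}{|x-y|^{2-2b+a-\eta}}\,\bar\mu_N(dx)\bar\mu_N(dy)} < \infty,
\]
with $b$ close to $a$ and $\eta$ small so the exponent approaches $2-a$, and then to invoke Frostman's lemma, which converts almost-sure finiteness of the $\alpha$-energy into almost-sure vanishing on all sets of dimension $<\alpha$ \emph{simultaneously}. You actually already have this integral appearing in your sketch of step (b); the missing idea is to retain it (rather than only the total second moment) and to replace your covering argument by Frostman's lemma.
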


Before jumping into the proof of Theorem \ref{th:convergence}, we state the following result which is a quick consequence of \eqref{eq:lem_Poisson_convergence}.

\begin{lemma}\label{lem:elementary2}
Let $x_0 \in U$ and let $\phi_N : \C \to [0,1]$ be a sequence of functions converging pointwise towards $\phi$.
Let $\{z_i, i =1 \dots p \} \subset \partial U$ be the points where the boundary $\partial U$ is not analytic. Assume that for all $\alpha >0$ and for any compact subset $K$ of $\C \backslash \{z_i,i=1 \dots p\}$, there exists $C_{\alpha,K} >0$ such that for all $N$ large enough and for all $z,z' \in K$,
\[
\abs{\phi_N(z) - \phi_N(z')} \leq C_{\alpha,K} \abs{z-z'} + \alpha.
\]
Then,
\[
\sum_{z \in \partial U_N} H^{U_N}( \floor{Nx_0}, z) \phi_N(z/N) \xrightarrow[N \to \infty]{} \int_{\partial U} H^U(x_0,z) \phi(z) dz.
\]
$dz$ denotes here the one-dimensional Hausdorff measure on $\partial U$.
\end{lemma}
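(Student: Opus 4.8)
The plan is to reduce the claim to the known weak convergence \eqref{eq:lem_Poisson_convergence} of the discrete Poisson kernel, using the equicontinuity-type hypothesis on $\phi_N$ away from the non-analytic boundary points $z_1,\dots,z_p$ to pass from convergence against continuous test functions to convergence against the possibly-discontinuous limit $\phi$. First I would fix a small parameter $\delta>0$ and remove a $\delta$-neighbourhood of $\{z_i\}$ from $\partial U$; call $\partial U^\delta$ the complement and $K_\delta$ a compact subset of $\C\setminus\{z_i,i=1\dots p\}$ containing this piece of boundary. The harmonic measure $H^U(x_0,\cdot)$ of $\partial U$ is finite, so the contribution of the removed neighbourhood, $\int_{\partial U\setminus\partial U^\delta}H^U(x_0,z)\phi(z)\,dz$, is at most $\int_{\partial U\setminus\partial U^\delta}H^U(x_0,z)\,dz$, which tends to $0$ as $\delta\to 0$. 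On the discrete side, $\sum_{z\in\partial U_N}H^{U_N}(\floor{Nx_0},z)$ is bounded (it equals $1$), and by \eqref{eq:lem_Poisson_convergence} applied to a continuous bump function that is $1$ near $\{z_i\}$ one sees that $\sum_{z\in\partial U_N:\, z/N\notin K_\delta}H^{U_N}(\floor{Nx_0},z)$ is, for $N$ large, bounded by a quantity tending to $0$ with $\delta$; since $\phi_N\le 1$, the corresponding discrete sum over $z/N\notin K_\delta$ is negligible uniformly in large $N$.

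It therefore suffices to prove the convergence with $\phi_N$ and $\phi$ replaced by $\phi_N\indic{\cdot\in K_\delta}$ and $\phi\indic{\cdot\in K_\delta}$, i.e. to handle the part of the boundary on which $\partial U$ is analytic and on which the $\phi_N$ are asymptotically equicontinuous. Here I would invoke a discrete Arzel\`a--Ascoli argument: the hypothesis $\abs{\phi_N(z)-\phi_N(z')}\le C_{\alpha,K_\delta}\abs{z-z'}+\alpha$ for all $z,z'\in K_\delta$ and all large $N$, together with $0\le\phi_N\le 1$, forces any subsequential pointwise limit of $\phi_N$ restricted to $K_\delta$ to be uniformly continuous, and since $\phi_N\to\phi$ pointwise by assumption, $\phi$ restricted to $K_\delta$ is (uniformly) continuous and $\phi_N\to\phi$ uniformly on $K_\delta$. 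Write $\phi_N = \phi + (\phi_N-\phi)$ on $K_\delta$: the error term contributes at most $\norme{\phi_N-\phi}_{\infty,K_\delta}\cdot\sum_z H^{U_N}(\floor{Nx_0},z)\le\norme{\phi_N-\phi}_{\infty,K_\delta}\to 0$, while for the main term $\phi$ is now a bounded continuous function on the relevant part of $\C$ and \eqref{eq:lem_Poisson_convergence} (after multiplying by a continuous cutoff supported in a neighbourhood of $K_\delta$ and equal to $1$ on $K_\delta$, to make the test function compactly supported away from $\{z_i\}$) gives
\[
\sum_{z\in\partial U_N}H^{U_N}(\floor{Nx_0},z)\,\phi(z/N)\indic{z/N\in K_\delta}\xrightarrow[N\to\infty]{}\int_{\partial U}H^U(x_0,z)\,\phi(z)\indic{z\in K_\delta}\,dz.
\]
Letting $\delta\to 0$ and combining with the two negligibility estimates from the first paragraph yields the claim.

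The only genuinely delicate point is the interchange of limits: one must be sure that the bound on the mass of $H^{U_N}(\floor{Nx_0},\cdot)$ near the finitely many non-analytic boundary points is \emph{uniform in large $N$} and tends to $0$ as the neighbourhood shrinks, so that the $\delta\to 0$ limit can be taken after the $N\to\infty$ limit. This is exactly what \eqref{eq:lem_Poisson_convergence} provides — testing against a fixed continuous function that dominates $\indic{\text{near }\{z_i\}}$ controls $\limsup_N$ of the discrete mass there by the (small) continuous harmonic measure of a neighbourhood of $\{z_i\}$. Everything else is a routine $\eps/3$-type splitting. I would also remark that the hypotheses are tailored precisely to the situation $\phi_N(z) = \Expect{\exp(-\scalar{\mu^{D,a}_{x_0,z;N},f})}$-type functionals arising in the proof of Theorem \ref{th:convergence}, where the Lipschitz-up-to-$\alpha$ control comes from Lemma \ref{lem:Poisson} (inequality \eqref{eq:lem_Poisson}).
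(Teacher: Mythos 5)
Your proposal is correct, and it takes a genuinely different (though related) route from the paper. The paper does not establish uniform convergence of $\phi_N$ to $\phi$; instead, after removing a small neighbourhood of $\{z_i\}$, it decomposes the remaining compact piece $K$ of boundary into finitely many smooth pieces $K_1,\dots,K_I$ of diameter at most $\eps$, picks a reference point $y_i\in K_i$, replaces $\phi_N$ on $K_i$ by the constant $\phi_N(y_i)$ at the cost $C_{\alpha,K}\eps+\alpha$ (directly from the hypothesis), applies \eqref{eq:lem_Poisson_convergence} to each constant piece, and only then lets $\eps\to 0$ and $\alpha\to 0$. You, by contrast, first upgrade pointwise convergence to \emph{uniform} convergence $\phi_N\to\phi$ on the compact set away from $\{z_i\}$, via an Arzel\`a--Ascoli-type argument using the almost-Lipschitz hypothesis (cover by balls of radius $\alpha/C_{\alpha,K}$, use pointwise convergence at the centres, propagate by the almost-Lipschitz bound — this does work, giving the error $2C_{\alpha,K}\,r+3\alpha = 5\alpha$ after one absorbs the pointwise-convergence error), and then reduce to a single application of \eqref{eq:lem_Poisson_convergence} against the fixed, uniformly continuous function $\phi$. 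What your route buys is conceptual clarity: the two hypotheses (pointwise convergence and asymptotic equicontinuity) combine into one clean statement (locally uniform convergence of $\phi_N$) before the weak-convergence machinery enters, and the final step becomes a standard approximation of an indicator by continuous cutoffs. What the paper's route buys is directness: it never has to state or prove the uniform-convergence lemma and just does the $\eps/3$-splitting in one go, with the error terms written down explicitly. Both rely in the same crucial way on the fact that the discrete harmonic measure near $\{z_i\}$ has uniformly small mass for large $N$ (tested against a continuous bump), which is the interchange-of-limits point you correctly flagged as the only delicate step. One small caveat: in your write-up the application of \eqref{eq:lem_Poisson_convergence} to $\phi\cdot\indic{\cdot\in K_\delta}$ requires a slightly more careful sandwich by continuous cutoffs $\chi^-\leq\indic{K_\delta}\leq\chi^+$ with $\int(\chi^+-\chi^-)H^U\,dz$ small, since the indicator of a compact set is not continuous; this is routine but worth spelling out when writing the proof in full.
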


\begin{proof}
In this proof, when we say that a set $K \subset \C$ is smooth, we mean that each connected component of the boundary of $K$ is analytic.
Let $\alpha,\eps>0$. Since $0 \leq \phi \leq 1$, there exists a smooth compact subset $K$ of $\C \backslash \{z_i,i=1 \dots p\}$ such that
\[
\int_{\partial U \backslash K} H^U(x,z) \phi(z) dz \leq \alpha.
\]
Using the weak convergence \eqref{eq:lem_Poisson_convergence}, this upper bound in particular implies
\[
\limsup_{N \to \infty} \sum_{z \in \partial U_N} \indic{z/N \notin K} H^{U_N}( \floor{Nx}, z) \phi_N(z/N) \leq \alpha.
\]
We now decompose $K = \cup_{i=1}^I K_i$ into smooth compact sets of diameter at most $\eps$ and such that for all $i \neq j$, $K_i \cap K_j \cap \partial U$ is composed of at most one point. For all $i = 1 \dots I$, let $y_i$ be any point of $K_i$. By the weak convergence \eqref{eq:lem_Poisson_convergence}, we now have
\begin{align*}
& \limsup_{N \to \infty} \sum_{z \in \partial U_N} \indic{z/N \in K} H^{U_N}( \floor{Nx}, z) \phi_N(z/N) \\
& \leq \alpha + C_{\alpha,K} ~\eps + \limsup_{N \to \infty} \sum_{i=1}^I \phi_N(y_i) \sum_{z \in \partial U_N} \indic{z/N \in K_i} H^{U_N} (\floor{Nx},z) \\
& \leq \alpha + C_{\alpha,K} ~\eps + \sum_{i=1}^I \phi(y_i) \int_{\partial U \cap K_i} H^U(x,z) dz\\
& \leq 2 \alpha + 2 C_{\alpha,K} ~\eps + \int_{\partial U \cap K} H^U(x,z) \phi(z) dz.
\end{align*}
We have obtained
\[
\limsup_{N \to \infty} \sum_{z \in \partial U_N} H^{U_N}( \floor{Nx}, z) \phi_N(z/N)
\leq 3 \alpha + 2 C_{\alpha,K} ~\eps + \int_{\partial U} H^U(x,z) \phi(z) dz.
\]
We obtain the desired upper bound by letting $\eps \to 0$ and then $\alpha \to 0$. The lower bound is similar.
\end{proof}

We are now ready to prove Theorem \ref{th:convergence}.

\begin{proof}[Proof of Theorem \ref{th:convergence}]
Let $x_0 \in U$. We start by assuming the convergence of $(\mu_{x_0,z;N}^{U,a}, N \geq 1)$ for all nice points $z \in \partial U$ and we are going to explain how we deduce the convergence of $(\mu_{x_0;N}^{U,a}, N \geq 1)$. Let $f \in C(D,[0,\infty))$. It is enough to prove that
\[
\Expect{ \exp \left( - \scalar{ \mu_{x_0;N}^{U,a},f} \right) }
\]
converges.
By Lemma \ref{lem:1-point} \eqref{eq:lem_bound},
\[
\lim_{r \to 0} \sup_N \Expect{ \mu_{x_0;N}^{U,a} \left( \{x \in U: d(x,\partial U) \leq r\} \right) } = 0.
\]
We can thus assume that $f$ has a compact support included in $U$ (see Lemma \ref{lem:elementary}). We have
\[
\Expect{ \exp \left( - \scalar{ \mu_{x_0;N}^{U,a},f} \right) }
= \sum_{z \in \partial U_N} H^{U_N}(x_0,\floor{Nz}) \Expect{ \exp \left( - \scalar{ \mu_{x_0,z/N;N}^{U,a},f} \right) }.
\]
To obtain the convergence of the above sum, we are going to show that we can cast our situation into Lemma \ref{lem:elementary2}. Let $\alpha, r>0$ and define
\[
U^r := \{ x \in U: \mathrm{dist}(x,\partial U) > r \}.
\]
By Lemma \ref{lem:1-point}, if $r$ is small enough (possibly depending on $U, x_0$ and $f$), we have for all $z \in \partial D$,
\begin{align*}
& \abs{ \Expect{ \exp \left( - \scalar{ \mu_{x_0,z;N}^{U,a},f} \right) } - \Expect{ \left. \exp \left( - \scalar{ \mu_{x_0;N}^{U^r,a},f} \right) \right\vert X_{\tau_{\partial U_N}} = \floor{Nz} } } \\
& \leq \Expect{ \scalar{ \mu_{x_0,z;N}^{U,a},f} } - \Expect{ \left. \scalar{ \mu_{x_0;N}^{U^r,a},f} \right\vert X_{\tau_{\partial U_N}} = \floor{Nz}  } \leq \alpha/3.
\end{align*}
We now notice by Lemma \ref{lem:Poisson} \eqref{eq:lem_Poisson} that for all $N$ large enough and $z,z' \in \partial D$,
\begin{align*}
& \abs{ \Expect{ \left. \exp \left( - \scalar{ \mu_{x_0;N}^{U^r,a},f} \right) \right\vert X_{\tau_{\partial U_N}} = \floor{Nz} } - \Expect{ \left. \exp \left( - \scalar{ \mu_{x_0;N}^{U^r,a},f} \right) \right\vert X_{\tau_{\partial U_N}} = \floor{Nz'} } } \\
& = \abs{ \Expect{ \exp \left( - \scalar{ \mu_{x_0;N}^{U^r,a},f} \right) \left( \frac{H^{U_N}(X_{\tau_{\partial U_N^r}},\floor{Nz})}{H^{U_N}(x_0,\floor{Nz})} - \frac{H^{U_N}(X_{\tau_{\partial U_N^r}},\floor{Nz'})}{H^{U_N}(x_0,\floor{Nz'})} \right) } } \\
& \leq \alpha/3 + \sup_{x,y \in U^r} \abs{ \frac{H^U(x,z)}{H^U(y,z)} - \frac{H^U(x,z')}{H^U(y,z')} }.
\end{align*}
Using \eqref{eq:Poisson_conformal}, we see that for all compact subset $K$ of an analytic portion of $\partial U$, the above supremum is at most $C_{\alpha,K} \abs{z-z'}$ for all $z,z' \in K$. We have proven that for all $N$ large enough, all such compact subset $K$ and $z,z' \in K$,
\[
\abs{ \Expect{ \exp \left( - \scalar{ \mu_{x_0,z;N}^{U,a},f} \right) } - \Expect{ \exp \left( - \scalar{ \mu_{x_0,z';N}^{U,a},f} \right) } } \leq C_{\alpha,K} \abs{z-z'} + \alpha.
\]
 We can thus conclude with Lemma \ref{lem:elementary2} that
\[
\Expect{ \exp \left( - \scalar{ \mu_{x_0;N}^{U,a},f} \right) }
\xrightarrow[N \to \infty]{} \int_{\partial D} H^U(x_0,z) \lim_{N \to \infty} \Expect{ \exp \left( - \scalar{ \mu_{x_0,z;N}^{U,a},f} \right) } dz.
\]
This finishes the transfer of the convergence of conditioned measures to unconditioned measures.

We now turn to the proof of the convergence of $(\mu_{x_*,z_*;N}^{U,a}, N \geq 1)$ where $x_* \in U$ and $z_* \in \partial U$ is a nice point. Let $(N_k, k \geq 1)$ be an increasing sequence of integers such that $(\mu_{x_*,z_*;N_k}^{U,a}, k \geq 1)$ converges. By Lemma \ref{lem:uncountable_extraction}, by extracting a further subsequence if necessary, we can assume that for all
$\Dc'\Xc'\Zc' \in \Sc$,
\[
(\mu_{\Xc,\Zc;N_k}^{\Dc,a}, \Dc\Xc\Zc \subset \Dc'\Xc'\Zc')
\]
converges as $k \to \infty$ towards some
\[
(\mu_{\Xc,\Zc}^{\Dc,a}, \Dc\Xc\Zc \subset \Dc'\Xc'\Zc').
\]
By Theorem \ref{th:charac}, to show that $\mu_{x_*,z_*}^{U,a} \overset{\mathrm{(d)}}{=} e^{c_0a/g} \Mc_{x_*,z_*}^{U,a}$, it is enough to prove that 
$(\mu^{\Dc,a}_{\Xc,\Zc}$, $\Dc\Xc\Zc \in \Sc)$
satisfies Properties \ref{charac1}-\ref{charac3}.

Property \ref{charac1} is a direct consequence of what we have already done.
For instance, for $\Dc\Xc\Zc = \{(D,x_0,z)\} \in \Sc$, the arguments are as follows. In order to identify the two finite Borel measures
\[
\Expect{ \mu^{D,a}_{x_0,z}(dx)}
\quad \text{and} \quad
e^{c_0a/g} \psi^D_{x_0,z}(x) dx,
\]
we only need to check that for any continuous bounded nonnegative function $f : \C \to \R$, the integrals of $f$ against these two measures agree. For $r>0$, let $f_r$ be a continuous function with support compactly included in $D$ which agrees with $f$ on 
$\{x \in D: d(x,\partial D) \geq r \}$ and such that $0 \leq f_r \leq f$.
By Proposition \ref{prop:tightness}, for all $r>0$,
\[
\lim_{k \to \infty} \Expect{\scalar{ \mu^{D,a}_{x_0,z;N_k}, f_r } } = e^{c_0a/g} \int_D f_r(x) \psi^{D,a}_{x_0,z}(x) dx.
\]
Since Proposition \ref{prop:uniform_integrability} shows that $(\scalar{\mu^{D,a}_{x_0,z;N_k}, f_r}, k \geq 1)$ is uniformly integrable, we can interchange the limit and the expectation which gives
\[
\Expect{\scalar{\mu^{D,a}_{x_0,z}, f_r} } = e^{c_0a/g} \int_D f_r(x) \psi^D_{x_0,z}(x) dx.
\]
We then obtain Property \ref{charac1} by letting $r \to 0$ and using monotone convergence theorem.

The proof of Property \ref{charac2} is very similar to the Brownian case. For instance, in the case $\Dc\Xc\Zc = \{(D,x_0,z)\} \in \Sc$ and $D'$ nice subset of $D$ containing $x_0$, we can very similarly show that for all continuous function $f : \C \to [0,\infty)$ with compact support included in $D \backslash \partial D'$, and all $y \in \partial D'$, $\scalar{\mu_{x_0,z;N}^{D,a},f}$ under $\prob^{D_N}_{\floor{Nx_0},\floor{Nz}} \left( \cdot \left\vert X_{\tau_{\partial D'_N}} = \floor{Ny} \right. \right)$ has the same law as
\[
\scalar{\mu_{x_0,y;N}^{D',a},f} + \scalar{\mu_{y,z;N}^{D,a},f} + \scalar{ \mu_{(D',x_0,y),(D,y,z);N}^{a}, f}
\]
plus smaller order terms which converge to zero in $L^1$. This shows the conditional version of Property \ref{charac2}. To obtain Property \ref{charac2} without having to condition on the hitting point of $\partial D'$, we have to integrate over $y \in \partial D'$. For this, we use the same argument as what we did at the very beginning of the proof to transfer results from the conditioned to the unconditioned measures.

Finally, Property \ref{charac2bis} follows from the fact that we consider independent random walks and Property \ref{charac3} is a direct consequence of the carrying dimension estimate of Proposition \ref{prop:uniform_integrability}. This concludes the proof.
\end{proof}

\section{Uniform integrability: proof of Proposition \ref{prop:uniform_integrability}}
\label{sec:uniform integrability}

To ease notations, we will prove Proposition \ref{prop:uniform_integrability} for $\Dc\Xc\Zc = \{(D,x_0,z)\}$.
Our approach is very close to the one of \cite{jegoGMC}. We have simplified some minor aspects since we only need to show the uniform integrability of the sequence but not its convergence in $L^1$. For instance, our definition of ``good events'' limits the number of certain excursions rather than limiting certain local times. 

If $x \in \Z^2$ and $R \geq 1$, we will denote by $C_R(x)$ the contour $\Z^2 \cap \partial (x + [-R,R]^2)$, by $A_N(x \to R)$ the number of excursions from $x$ to $C_R(x)$ before $\tau_{\partial D_N}$ and
\begin{equation}
\label{eq:app_def_qR}
q_R := \left.\log \left(\frac{N}{R} \right) \right/ \log N.
\end{equation}
For $b \in (a,2)$ and $\eps>0$, we introduce 
\[
D^\eps := \{ x \in D: d_\infty(x,\partial D) > 2 \eps \mathrm{~and~} \abs{x-x_0} \geq 2\eps \},
\]
the good event at $x$
\[
G^{b,\eps}_N(x) := \left\{ \forall R \in (2^p)_{p \geq 1} \cap [N^{1/2-a/4},\eps N], A_N(x \to R) \leq \frac{b}{2} \frac{1+q_R}{1-q_R} \log \frac{N}{R} \right\}
\]
and the modified version of $\mu_{x_0;N}^{D,a}(\C)$,
\[
\bar{\mu}_{x_0;N}^{D,a}(\C) := \frac{\log N}{N^{2 - a}} \sum_{x \in \Z^2} \indic{x/N \in D^\eps} \mathbf{1}_{G_N^{b,\eps}(x)} \indic{ \ell_x^{\tau_{\partial D_N}} \geq g a \log^2 N}.
\]
We will see that adding these good events does not change the behaviour of the first moment and makes the second moment finite.

\begin{lemma}\label{lem:app_first_moment_unchanged}
For all $b >a$,
\[
\lim_{\eps \to 0} \sup_{N \geq 1} \Expect{ \abs{\mu_{x_0;N}^{D,a}(\C) - \bar{\mu}_{x_0;N}^{D,a}(\C)} } = 0.
\]
\end{lemma}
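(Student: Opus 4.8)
Since $\bar{\mu}_{x_0;N}^{D,a}(\C) \le \mu_{x_0;N}^{D,a}(\C)$ pointwise (the barred measure carries the extra indicators $\indic{x/N \in D^\eps}$ and $\mathbf{1}_{G_N^{b,\eps}(x)}$), the random variable under the expectation is non-negative and
\[
\mu_{x_0;N}^{D,a}(\C) - \bar{\mu}_{x_0;N}^{D,a}(\C)
= \frac{\log N}{N^{2-a}} \sum_{x \in \Z^2} \indic{\ell_x^{\tau_{\partial D_N}} \ge ga\log^2 N}\Big(1 - \indic{x/N \in D^\eps}\,\mathbf{1}_{G_N^{b,\eps}(x)}\Big).
\]
Using $1 - \indic{x/N \in D^\eps}\,\mathbf{1}_{G_N^{b,\eps}(x)} \le \indic{x/N \notin D^\eps} + \indic{x/N \in D^\eps}\,\mathbf{1}_{G_N^{b,\eps}(x)^c}$ and taking expectations, it is enough to show that each of
\[
\mathrm{(I)} := \frac{\log N}{N^{2-a}}\sum_{x/N \notin D^\eps} \Prob{\ell_x^{\tau_{\partial D_N}} \ge ga\log^2 N}
\qquad\text{and}\qquad
\mathrm{(II)} := \frac{\log N}{N^{2-a}}\sum_{x/N \in D^\eps} \Prob{\ell_x^{\tau_{\partial D_N}} \ge ga\log^2 N,\ G_N^{b,\eps}(x)^c}
\]
tends to $0$ as $\eps \to 0$, uniformly in $N \ge 1$.

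For $\mathrm{(I)}$ we only need the first moment bound \eqref{eq:lem_bound} of Lemma \ref{lem:1-point} with $r=1$, which gives $\log(N)N^a\Prob{\ell_x^{\tau_{\partial D_N}} \ge ga\log^2 N} \le C\,|\log(|x/N - x_0|/C \vee 1/N)|$. Hence $\mathrm{(I)} \le \frac{C}{N^2}\sum_{x/N \notin D^\eps}|\log(|x/N - x_0|/C \vee 1/N)|$, and comparing this lattice sum with the corresponding integral over a slightly enlarged region, $\mathrm{(I)} \le C\int_{R_\eps}|\log(|y-x_0|/C)|\,dy + C\frac{\log N}{N^2}\indic{1/N < 3\eps}$ with $R_\eps := \{y : \mathrm{dist}(y,\partial D) \le 3\eps\} \cup \{y : |y-x_0| < 3\eps\}$. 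On the first piece of $R_\eps$ the integrand is bounded (one is away from $x_0$) and the area is $O(\eps)$; on the second, $|\log|y-x_0||$ is integrable near $x_0$, giving $O(\eps^2|\log\eps|)$; and $\sup_{N:\,1/N<3\eps}\frac{\log N}{N^2} = O(\eps^2|\log\eps|)$. Thus $\sup_N \mathrm{(I)} \to 0$ as $\eps \to 0$.

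For $\mathrm{(II)}$ the heart of the matter is that the threshold $\tfrac{b}{2}\tfrac{1+q_R}{1-q_R}\log\tfrac{N}{R}$ defining $G_N^{b,\eps}(x)$ is, when $b$ is replaced by $a$, calibrated to the typical number of excursions $A_N(x\to R)$ produced by a point that is $a$-thick for the whole trajectory; hence taking $b>a$ leaves a multiplicative slack $b/a>1$, and $G_N^{b,\eps}(x)^c$ forces a deviation of the excursion count by a fixed factor. Following the excursion decomposition of the local time used in \cite{jegoGMC} — keeping only the one-sided (upper) estimate, which is why this lemma is simpler than the $L^1$-convergence statement there — one obtains, uniformly over $x \in D^\eps$ and over dyadic $R \in [N^{1/2-a/4},\eps N]$,
\[
\Prob{\ell_x^{\tau_{\partial D_N}} \ge ga\log^2 N,\ A_N(x\to R) > \tfrac{b}{2}\tfrac{1+q_R}{1-q_R}\log\tfrac{N}{R}} \le C\,e^{-\kappa_0 \log(N/R)}\,\Prob{\ell_x^{\tau_{\partial D_N}} \ge ga\log^2 N}
\]
for some $\kappa_0 = \kappa_0(a,b)>0$. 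A union bound over the $O(\log N)$ admissible scales $R$, using that $e^{-\kappa_0 \log(N/R)}$ is maximal (of order $\eps^{\kappa_0}$) at $R$ of order $\eps N$ while the remaining scales contribute a convergent geometric series, gives $\Prob{\ell_x^{\tau_{\partial D_N}} \ge ga\log^2 N,\ G_N^{b,\eps}(x)^c} \le C\eps^{\kappa_0}\Prob{\ell_x^{\tau_{\partial D_N}} \ge ga\log^2 N}$. Summing over $x/N \in D^\eps$ yields $\mathrm{(II)} \le C\eps^{\kappa_0}\Expect{\mu_{x_0;N}^{D,a}(\C)}$, and the uniform boundedness in $N$ of the first moments $\Expect{\mu_{x_0;N}^{D,a}(\C)}$ (Lemma \ref{lem:1-point}, cf. Proposition \ref{prop:tightness}) gives $\sup_N \mathrm{(II)} \le C'\eps^{\kappa_0} \to 0$.

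The main obstacle is exactly the truncated moment estimate displayed in the previous paragraph: showing that, given $a$-thickness, making more than $\tfrac{b}{2}\tfrac{1+q_R}{1-q_R}\log\tfrac{N}{R}$ excursions out to $C_R(x)$ is exponentially costly in $\log(N/R)$, with a rate $\kappa_0(a,b)>0$ that is positive as soon as $b>a$ and uniform over $x$ in the bulk and over the admissible range of $R$. Quantitatively this requires the same excursion analysis — a truncated second-moment computation together with the random-walk structure of the excursion counts across dyadic scales — as in \cite{jegoGMC}; the present simplification is that the ``good event'' is phrased as an upper bound on excursion counts, so only the one-sided version of that estimate is needed.
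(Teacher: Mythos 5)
Your decomposition into (I) and (II) and the overall strategy — a union bound over dyadic scales $R$ plus a tail estimate on the excursion count given $a$-thickness, followed by summation against the first-moment bound — is exactly the paper's. The treatment of (I) is fine. But the heart of the lemma, the bound
\[
\prob^{D_N}_{Nx_0}\Bigl(\ell_x^{\tau_{\partial D_N}} \ge ga\log^2 N,\ A_N(x\to R) > \tfrac{b}{2}\tfrac{1+q_R}{1-q_R}\log\tfrac{N}{R}\Bigr) \le C\,\frac{G^{D_N}(Nx_0,x)}{G^{D_N}(x,x)}\,N^{-a}\Bigl(\tfrac NR\Bigr)^{-\kappa},
\]
uniformly over $x/N \in D^\eps$ and admissible $R$, is only asserted; you yourself flag it as ``the main obstacle.'' This is precisely the computation the paper carries out: writing the joint law of $(\ell_x^{\tau_{\partial D_N}}, A_N(x\to R))$ via the exponential-per-excursion and geometric excursion-count structure (Lemma~\ref{lem:app_indep_local_time}, \eqref{eq:app_6}, \eqref{eq:app_9}), expressing the truncated tail through a $\Gamma(k,1)$ variable \eqref{eq:app_gamma}, and then performing an explicit case analysis ($q_R \ge q_{ab}$ versus $q_R < q_{ab}$) using the elementary inequalities \eqref{eq:app_elem1}--\eqref{eq:app_elem2}. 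Deferring this to ``the same excursion analysis as in \cite{jegoGMC}'' (a continuum Brownian computation, and one that belongs to a different — second-moment — stage of that argument) does not substitute for supplying the random-walk estimate; that estimate is the content of this lemma, not a routine citation.

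A secondary inaccuracy: the claim that the rate $\kappa_0(a,b)$ is positive ``as soon as $b>a$, uniform over the admissible range of $R$'' overstates what the computation yields. The rate function $q\mapsto -a+\tfrac b2(1+q)\bigl(1+\log\tfrac ab+\log\tfrac2{1+q}\bigr)$ vanishes at $q=2a/b-1$, and a uniformly negative rate over $q_R\le\tfrac12+\tfrac a4$ is obtained only after taking $b$ close enough to $a$ that $\tfrac12+\tfrac a4<2a/b-1$ (this is the paper's ``If $b$ is close enough to $a$\ldots''); for $b$ near $2a$ the optimizer lies inside the admissible range and the direct rate degenerates. The conclusion for all $b>a$ can then be recovered because $G_N^{b,\eps}(x)$, and hence $\bar\mu^{D,a}_{x_0;N}$, is monotone increasing in $b$, so $\mu^{D,a}_{x_0;N}-\bar\mu^{D,a}_{x_0;N}$ is decreasing in $b$; but this monotonicity step must be made explicit and is not in your write-up.
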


\begin{lemma}\label{lem:app_second_moment}
If $b>a$ is close enough to $a$,
\begin{equation}
\label{eq:lem_app_second_moment}
\sup_{N \geq 1} \Expect{ \bar{\mu}_{x_0;N}^{D,a}(\C)^2 } < \infty.
\end{equation}
Moreover, if $b$ is close enough to $a$, for all $\eta >0$,
\begin{equation}
\label{eq:lem_app_energy}
\sup_{N \geq 1} \Expect{ \int_{\C^2} \frac{1}{\abs{x-y}^{2-2b+a-\eta}} \bar{\mu}_{x_0;N}^{D,a}(dx) \bar{\mu}_{x_0;N}^{D,a}(dy) } < \infty.
\end{equation}
\end{lemma}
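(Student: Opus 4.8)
The plan is to deduce both \eqref{eq:lem_app_second_moment} and \eqref{eq:lem_app_energy} from a single energy estimate: it is enough to prove that for every $\kappa\in[0,2-a)$ there is $C_\kappa<\infty$, and $b_\kappa>a$, such that for all $b\in(a,b_\kappa)$,
\[
\sup_{N\ge1}\ \E\!\left[\int_{\C^2}\frac{\bar\mu^{D,a}_{x_0;N}(dx)\,\bar\mu^{D,a}_{x_0;N}(dy)}{|x-y|^{\kappa}}\right]\le C_\kappa ;
\]
indeed $\kappa=0$ is \eqref{eq:lem_app_second_moment}, while $\kappa=2-2b+a-\eta$ lies in $[0,2-a)$ once $b$ is close to $a$ (the case of a negative exponent being trivial, the kernel then being bounded on the domain) and gives \eqref{eq:lem_app_energy}. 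Unfolding the definition of $\bar\mu^{D,a}_{x_0;N}$, this expectation equals
\[
\frac{(\log N)^2}{N^{4-2a}}\sum_{\substack{x,y\in\Z^2\\ x/N,\,y/N\in D^\eps}}\Big|\tfrac{x-y}{N}\Big|^{-\kappa}\ \P\big(F_N(x)\cap F_N(y)\big),\qquad F_N(x):=G^{b,\eps}_N(x)\cap\big\{\ell_x^{\tau_{\partial D_N}}\ge ga\log^2N\big\},
\]
so the whole matter reduces to a sharp upper bound on the two‑point probability $\P(F_N(x)\cap F_N(y))$ as a function of $\rho:=|x-y|$.

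For that bound I would run the standard excursion decomposition of the walk at the separation scale. Up to a bounded number of scales the excursion‑count processes $R\mapsto A_N(x\to R)$ and $R\mapsto A_N(y\to R)$ agree for $R\gtrsim\rho$ (the walk cannot yet tell $x$ from $y$), while, conditionally on the common count at scale $\rho$, the portions of the trajectory inside $D(x,\rho)$ and inside $D(y,\rho)$ are independent; the local time $\ell_x^{\tau_{\partial D_N}}$ then factorises, via Lemma~\ref{lem:app_indep_local_time}, into a sum of exponentials over these excursions. Encoding the excursion‑count process as an effective random walk with exponentially‑tailed increments and applying ballot/barrier‑type estimates, one obtains, after optimising over the thickness $v\in[0,a]$ accumulated during the common phase, a bound of the shape
\[
\P\big(F_N(x)\cap F_N(y)\big)\ \le\ C(\log N)^{C}\,N^{-2a}\,\Big(\tfrac{N}{\rho}\Big)^{\Lambda(a,b)}\,\rho^{-M(a,b)},
\]
whose exponents coincide with those of the \emph{unconstrained} optimisation when $a<1$ (in which case $b$ is irrelevant and the good event may be dropped), and are corrected in the favourable direction when $a\ge1$ by the cap $A_N(x\to R)\le\frac b2\frac{1+q_R}{1-q_R}\log\frac NR$ in $G^{b,\eps}_N$ — active precisely on the dyadic scales $R\in[N^{1/2-a/4},\eps N]$ — which forbids the atypically steep excursion‑count trajectories at intermediate scales responsible, when $b=a$, for the divergence of the naive second moment. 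This is essentially the truncated second‑moment computation of~\cite{jegoGMC}, with a barrier on excursion numbers in place of a barrier on circle local times.

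It remains to insert this into the double sum and organise the pairs by dyadic blocks $\rho=|x-y|\asymp 2^j$, $0\le j\le\log_2N$: there are $\asymp N^2\rho^2$ such pairs, the kernel contributes $\asymp(\rho/N)^{-\kappa}$, and after multiplying by the prefactor $(\log N)^2N^{-(4-2a)}$ the contribution of the $j$‑th block becomes $(\log N)^{C'}$ times $N^{h(\beta)}$ for an explicit (concave‑type) function $h$ of $\beta:=\log(N/\rho)/\log N$ and of $b$. One checks that, thanks to the good event and for $b$ close enough to $a$, $h$ is strictly negative on the interior of $[0,1]$ (and, for $\kappa<2-a$, remains so after the shift by $\kappa$), that the endpoint $\rho\asymp1$ gives $h\approx a-2<0$, and that the endpoint $\rho\asymp N$ gives $h\approx0$ yet contributes only $O(1)$, because there $\P(F_N(x)\cap F_N(y))$ factorises into two one‑point probabilities each carrying the correction $(\log N)^{-1}$, so the polylogarithmic factors combine to $(\log N)^{-2}$ and cancel the $(\log N)^2$ prefactor; summing the resulting geometric‑type series over the $O(\log N)$ blocks yields a bound uniform in $N$. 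Finally, letting $b\downarrow a$ and $\eta\downarrow0$ in \eqref{eq:lem_app_energy} and applying Frostman's lemma to the restriction of the limiting measure to $A$ gives the carrying‑dimension assertion of Proposition~\ref{prop:uniform_integrability}.

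The main obstacle is the two‑point estimate, and within it the precise bookkeeping of the good event: one must verify that truncating at the level $\frac b2\frac{1+q_R}{1-q_R}\log\frac NR$ removes exactly the excursion‑count trajectories that make the untruncated second moment diverge for $a\ge1$, while — by Lemma~\ref{lem:app_first_moment_unchanged} — leaving the first moment essentially unchanged, and one must track the polylogarithmic corrections sharply enough near macroscopic separations for the $(\log N)^2$ prefactor to be absorbed.
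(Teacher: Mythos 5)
Your reduction to a single energy estimate, and the organization of the double sum into dyadic blocks at the separation scale, do match the paper's strategy, and your observation about the polylogarithmic cancellation at macroscopic separations is correct. But there are two real problems with the core of the argument, which is the two‑point estimate.

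First, the statement that ``up to a bounded number of scales the excursion‑count processes $R\mapsto A_N(x\to R)$ and $R\mapsto A_N(y\to R)$ agree for $R\gtrsim\rho$'' is false as written, and your subsequent reference to ``the common count at scale $\rho$'' inherits the error. These are two distinct random variables: $A_N(x\to R)$ counts hits of $x$ between visits of $C_R(x)$, and for one realisation of the walk this has no reason to equal $A_N(y\to R)$ even when $R\gg|x-y|$. If you build the conditional‑independence step on a ``common count'' you are conditioning on an event that does not actually decouple the two local times. The paper avoids this entirely: it works at a \emph{single} scale $R\asymp|x-y|$, imposes the good‑event cap only at that scale, and controls the \emph{sum} $A_N(x\to R)+A_N(y\to R)$ through the identity of Lemma~\ref{lem:app_hitting_proba} and the resulting bound \eqref{eq:app_11}, $\P\big(A_N(x\to R)+A_N(y\to R)=k\big)\le \P(\tau_x\wedge\tau_y<\tau_{\partial D_N})\big(\tfrac{2q_R}{1+q_R}\big)^{k-1}(1+O(1/\log N))^{k-1}$; conditionally on the counts, the local times are independent sums of exponentials via Lemma~\ref{lem:app_indep_local_time}, and the product of two Gamma tails is then dominated via \eqref{eq:app_10}. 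No multi‑scale barrier, no ``effective random walk with exponentially‑tailed increments'', and no ballot estimate are needed — the good event is only ever read at the one scale $R\asymp|x-y|$.

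Second, the lemma lives or dies in the explicit exponent arithmetic that you leave as ``a bound of the shape $\ldots N^{-2a}(N/\rho)^{\Lambda(a,b)}\rho^{-M(a,b)}$''. The paper actually produces the bound $O(1)\,\P(\tau_x\wedge\tau_y<\tau_{\partial D_N})\,q_R\,N^{-2a}(N/R)^{2b-a}$, from which $2b-a<2$ for $b$ close to $a$ is the summability condition; establishing this exponent $2b-a$ requires optimising the Gamma–Geometric tail sum with the cap at level $\frac b2\frac{1+q_R}{1-q_R}\log\frac NR$, which is exactly the computation you skip. Relatedly, the small‑separation regime $|x-y|\le N^{1/2-a/4}$ must be handled separately (there is no admissible $R$ in $[N^{1/2-a/4},\eps N]$ between $x$ and $y$, so the good event gives nothing); the paper disposes of it by a trivial first‑moment bound, and your dyadic‑block count over all $j\in[0,\log_2 N]$ silently presupposes the truncated estimate holds there too. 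Finally, the closing remark about Frostman's lemma is part of the proof of Proposition~\ref{prop:uniform_integrability}, not of this lemma.
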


We now explain how these two lemmas imply Proposition \ref{prop:uniform_integrability}.

\begin{proof}[Proof of Proposition \ref{prop:uniform_integrability}]
Lemma \ref{lem:app_first_moment_unchanged} and \eqref{eq:lem_app_second_moment} imply that $(\mu_{x_0;N}^{D,a}(\C), N \geq 1)$ is uniformly integrable. Moreover, by Frostman's lemma, Lemma \ref{lem:app_first_moment_unchanged} and the energy estimate \eqref{eq:lem_app_energy} imply that any subsequential limit $\mu_{x_0}^{D,a}$ of $\mu_{x_0;N}^{D,a}, N \geq 1$, satisfies: almost surely for all Borel set $A$ with Hausdorff dimension smaller than $2-a$, $\mu_{x_0}^{D,a}(A) = 0$.

To finish the proof, we now have to explain how we transfer these results to the conditioned measures $\mu_{x_0,z;N}^{D,a}$, $N \geq 1$.
Let $K \Subset D$, $r >0$ and define $D^r := \{ x \in D, d(x, \partial D) > r \}$.
We denote by $\mu_{x_0,z;N}^{D^r,a}(K)$ the random variable
\[
\frac{\log N}{N^{2-a}} \sum_{x \in \Z^2} \indic{x/N \in K} \indic{ \ell_x^{\tau_{\partial D^r_N}} \geq ga \log^2 N}
\]
under $\prob_{x_0,z}^{D_N}$. A similar reasoning as in the proof of Lemma \ref{lem:uncountable_extraction} shows that
\[
0 \leq \Expect{ \mu_{x_0,z;N}^{D,a}(K) - \mu_{x_0,z;N}^{D^r,a}(K) } \leq p(r)
\]
for some $p(r) >0$ which may depend on $a,D,x_0,z$ and which goes to zero as $r \to 0$. Hence, to show the uniform integrability of $(\mu_{x_0,z;N}^{D,a}(K), N \geq 1)$, it is enough to show that $(\mu_{x_0,z;N}^{D^r,a}(K), N \geq 1)$ is uniformly integrable. Recalling that (see \eqref{eq:121b}, \eqref{eq:lem_Poisson} and \eqref{eq:Poisson_conformal})
\[
\frac{d \prob^{D_N}_{Nx_0,Nz}}{d \prob^{D_N}_{Nx_0}} \Big\vert_{\Fc_{\tau_{\partial D^r_N}}} = \frac{H_N(X_{\tau_{\partial D^r_N}},Nz)}{H_N(Nx_0,Nz)} \in [\alpha, 1/ \alpha]
\]
for some $\alpha = \alpha(r) \in (0,1)$, we then observe that for all $M >0$,
\begin{align*}
\Expect{ \mu_{x_0,z;N}^{D^r,a}(\C) \indic{ \mu_{x_0,z;N}^{D^r,a}(\C) \geq M} }
& \leq \frac{1}{\alpha} \Expect{ \mu_{x_0;N}^{D^r,a}(\C) \indic{ \mu_{x_0;N}^{D^r,a}(\C) \geq M} } \\
& \leq \frac{1}{\alpha} \Expect{ \mu_{x_0;N}^{D,a}(\C) \indic{ \mu_{x_0;N}^{D,a}(\C) \geq M} }.
\end{align*}
The uniform integrability of $(\mu_{x_0;N}^{D,a}(\C), N \geq 1)$ thus implies the uniform integrability of
\newline
$(\mu_{x_0,z;N}^{D^r,a}(\C)$, $N \geq 1)$. 

To obtain the carrying dimension estimate we proceed in a similar manner. If we denote $\bar{\mu}_{x_0,z;N}^{D^r,a}(dx)$ the modified version of $\mu_{x_0,z;N}^{D^r,a}(dx)$ for which we have added the good events $G^{b,\eps}_N(x)$ for the domain $D^r$, we have as before
\begin{align*}
& \sup_{N \geq 1} \Expect{ \int_{\C^2} \frac{1}{\abs{x-y}^{2-2b+a-\eta}} \bar{\mu}^{D^r,a}_{x_0,z;N}(dx) \bar{\mu}^{D^r,a}_{x_0,z;N}(dy) } \\
& \leq \frac{1}{\alpha} \sup_{N \geq 1} \Expect{ \int_{\C^2} \frac{1}{\abs{x-y}^{2-2b+a-\eta}} \bar{\mu}^{D^r,a}_{x_0;N}(dx) \bar{\mu}^{D^r,a}_{x_0;N}(dy) } < \infty
\end{align*}
and 
\[
\limsup_{\eps \to 0} \sup_{N \geq 1} \Expect{ \mu_{x_0,z;N}^{D^r,a}(\C) - \bar{\mu}^{D^r,a}_{x_0,z;N}(\C) } \leq \frac{1}{\alpha} \limsup_{\eps \to 0} \sup_{N \geq 1} \Expect{ \mu_{x_0;N}^{D^r,a}(\C) - \bar{\mu}^{D^r,a}_{x_0;N}(\C) } 0.
\]
For the same reasons as before, this shows that any subsequential limit $\mu_{x_0,z}^{D^r,a}$ of $\mu_{x_0,z;N}^{D^r,a}$, $N \geq 1$, satisfies: almost surely for all Borel set $A$ of Hausdorff dimension smaller than $2-a$, $\mu_{x_0,z}^{D^r,a}(A)=0$. Since this is true for all $r >0$, it completes the proof the carrying dimension estimate of Proposition \ref{prop:uniform_integrability}. This concludes the proof.
\end{proof}

The rest of this section is dedicated to the proofs of Lemmas \ref{lem:app_first_moment_unchanged} and \ref{lem:app_second_moment}. We now lay the groundwork.
If $A \subset \Z^2$, we will write
\[
\tau_A := \inf \{ t >0: X_t \in A \}
\]
and for $x \in \Z^2$, $\tau_x := \tau_{ \{x\} }$.
Let $N \geq 1$. For $x,y \in D_N$, we will denote
\begin{equation}
\label{eq:app_def_pxy}
p_{xy} := \PROB{x}{\tau_y < \tau_{\partial D_N}} = G^{D_N}(x,y) / G^{D_N}(y,y).
\end{equation}
If $x$ and $y$ are in the bulk of $D_N$, Lemma \ref{lem:Green} implies that
\begin{equation}
\label{eq:app_approx_pxy}
p_{xy} = q_{\abs{x-y}} \left(1 + O \left( \frac{1}{\log N} \right) \right).
\end{equation}
We start off with two easy lemmas. The first one is the analogue of \cite[Lemma 2.3]{jegoGMC} whereas the second one is well-known and a proof can be found for instance in \cite[Lemma 4.2.1]{jego2020}.

\begin{lemma}\label{lem:app_hitting_proba}
For all pairwise distinct points $x,y,z$ of $D_N$,
\[
\PROB{z}{\tau_x < \tau_y \wedge \tau_{\partial D_N} } = \frac{p_{zx}-p_{zy}p_{yx}}{1-p_{xy}p_{yx}}.
\]
\end{lemma}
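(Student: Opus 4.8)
The plan is to establish the identity by a first-passage decomposition together with the strong Markov property, which reduces the statement to solving a $2\times 2$ linear system. Introduce the quantity of interest $u := \PROB{z}{\tau_x < \tau_y \wedge \tau_{\partial D_N}}$ and its mirror image $v := \PROB{z}{\tau_y < \tau_x \wedge \tau_{\partial D_N}}$, obtained by exchanging the roles of $x$ and $y$. Since $x \neq y$, the hitting times $\tau_x$ and $\tau_y$ are almost surely distinct, so on the event $\{\tau_x < \tau_{\partial D_N}\}$ exactly one of $\tau_y < \tau_x$ or $\tau_x < \tau_y$ occurs; this gives the disjoint decomposition
\[
\{\tau_x < \tau_{\partial D_N}\} = \{\tau_x < \tau_y \wedge \tau_{\partial D_N}\} \;\sqcup\; \{\tau_y < \tau_x < \tau_{\partial D_N}\}.
\]

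First I would observe that $\{\tau_y < \tau_x < \tau_{\partial D_N}\}$ coincides with the event that the walk reaches $y$ before both $x$ and $\partial D_N$, and then, restarted from $y$, reaches $x$ before $\partial D_N$. Applying the strong Markov property at the stopping time $\tau_y$ (which is finite on this event, and before which nothing in $\{x\}\cup\partial D_N$ has been visited) yields
\[
\PROB{z}{\tau_y < \tau_x < \tau_{\partial D_N}} = \PROB{z}{\tau_y < \tau_x \wedge \tau_{\partial D_N}} \cdot \PROB{y}{\tau_x < \tau_{\partial D_N}} = v\, p_{yx}.
\]
Combined with the decomposition above, this reads $p_{zx} = u + v\, p_{yx}$, and exchanging $x \leftrightarrow y$ gives $p_{zy} = v + u\, p_{xy}$. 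Eliminating $v$ (write $v = p_{zy} - u\, p_{xy}$ and substitute) produces $p_{zx} - p_{zy}\, p_{yx} = u\,(1 - p_{xy}\, p_{yx})$, which is the claimed formula after dividing through.

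The one point that still needs a remark is that the denominator is nonzero. This holds because $D_N$ is a finite set and $\partial D_N$ is reachable from every vertex, so from either $x$ or $y$ there is positive probability of exiting through $\partial D_N$ before hitting the other point; hence $p_{xy} < 1$ and $p_{yx} < 1$, and in particular $1 - p_{xy}\, p_{yx} > 0$. There is no genuine obstacle: the argument is a routine excursion-theoretic computation, and only the probabilistic meaning of $p_{xy}$ is used — the Green's function identity $p_{xy} = G^{D_N}(x,y)/G^{D_N}(y,y)$ recorded in \eqref{eq:app_def_pxy} plays no role in this particular proof.
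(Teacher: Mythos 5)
Your proof is correct and is essentially the same as the paper's: you derive the two linear relations $p_{zx} = u + v\,p_{yx}$ and $p_{zy} = v + u\,p_{xy}$ via a first-passage decomposition and the strong Markov property, then solve for $u$. The only cosmetic difference is that you start by decomposing $\{\tau_x < \tau_{\partial D_N}\}$ where the paper starts with $\{\tau_y < \tau_{\partial D_N}\}$, and you add a brief (and correct) remark on the nonvanishing of the denominator.
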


\begin{proof}
By Markov property, we have
\begin{align*}
\PROB{z}{\tau_y < \tau_{\partial D_N} } & = \PROB{z}{\tau_y < \tau_x \wedge \tau_{\partial D_N} }
+ \PROB{z}{\tau_x < \tau_y < \tau_{\partial D_N} } \\
& = \PROB{z}{\tau_y < \tau_x \wedge \tau_{\partial D_N} } + \PROB{z}{\tau_x < \tau_y \wedge \tau_{\partial D_N}} \PROB{x}{\tau_y < \tau_{\partial D_N}}.
\end{align*}
By exchanging the roles of $x$ and $y$ we find that
\[
\PROB{z}{\tau_x < \tau_{\partial D_N} } = \PROB{z}{\tau_x < \tau_y \wedge \tau_{\partial D_N} } + \PROB{z}{\tau_y < \tau_x \wedge \tau_{\partial D_N}} \PROB{y}{\tau_x < \tau_{\partial D_N}}.
\]
Combining these two equalities yields the stated claim.
\end{proof}

\begin{lemma}\label{lem:app_indep_local_time}
For all subset $A \subset \Z^2$ and $x \in \Z^2$, starting from $x$, $\ell_x^{\tau_A}$ is an exponential variable independent of $X_{\tau_A}$.
\end{lemma}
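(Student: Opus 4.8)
The plan is to condition on the embedded jump chain of $X$ and to exploit the lack of memory of the exponential holding times. Assume $x \notin A$ and $A \neq \varnothing$ (the other cases being degenerate, as then either $\tau_A = 0$ or $\tau_A = \infty$); recall also that a two-dimensional walk is recurrent, so $\tau_A < \infty$ and the visits of $X$ to $x$ are infinite in number. Denote by $\hat X$ the sequence of sites visited by $X$ (its discrete skeleton), and let $E_1, E_2, \dots$ be the successive holding times of $X$ at the site $x$, listed in the order of the successive visits of $X$ to $x$. Since the walk has jump rate one and, at each step, the holding time at a site is independent of the site jumped to next, the variables $E_i$ are i.i.d.\ $\mathrm{Exp}(1)$ and jointly independent of $\hat X$. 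Writing $N$ for the number of visits of $\hat X$ to $x$ strictly before $\hat X$ first enters $A$, we have $\ell_x^{\tau_A} = E_1 + \dots + E_N$, whereas $N$ and $X_{\tau_A}$ are both $\sigma(\hat X)$-measurable.

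Next I would show that, under $\P_x$, the triple $\bigl((E_i)_{i \geq 1},\, N,\, X_{\tau_A}\bigr)$ is made of three mutually independent objects, with $N$ geometric. For this, cut $\hat X$ into its successive excursions away from $x$; by the strong Markov property at each return time to $x$, these excursions are i.i.d., and each one independently either returns to $x$ --- with probability $p := \PROB{x}{\tau_x^+ < \tau_A}$, where $\tau_x^+$ denotes the first return time to $x$ --- or reaches $A$ before returning, with probability $1-p$, in which case its endpoint has some fixed law $\nu$ on $A$. The event $\{N = k\}$ is then exactly the event that the first $k-1$ excursions return to $x$ and the $k$-th reaches $A$, so that $\Prob{N = k,\ X_{\tau_A} \in \cdot} = p^{k-1}(1-p)\,\nu(\cdot)$. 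This factorises, hence $N \sim \mathrm{Geom}(1-p)$ and $X_{\tau_A} \sim \nu$ are independent; both are independent of $(E_i)_{i \geq 1}$ since the latter is independent of $\hat X$.

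Granting this, for every $\lambda \geq 0$ and every bounded $f : A \to \R$,
\[
\E_x\bigl[ e^{-\lambda \ell_x^{\tau_A}} f(X_{\tau_A}) \bigr]
= \E_x\bigl[ (1+\lambda)^{-N} f(X_{\tau_A}) \bigr]
= \frac{1-p}{1+\lambda-p}\,\E_x\bigl[ f(X_{\tau_A}) \bigr],
\]
where the first equality uses $\E[e^{-\lambda E_i}] = (1+\lambda)^{-1}$ together with the independence of the $E_i$ from $N$ and $X_{\tau_A}$, and the second uses that $N$ is geometric with parameter $1-p$ and independent of $X_{\tau_A}$. Since $\lambda \mapsto \tfrac{1-p}{1+\lambda-p}$ is precisely the Laplace transform of an $\mathrm{Exp}(1-p)$ variable, this identity says that $\ell_x^{\tau_A}$ is $\mathrm{Exp}(1-p)$ and independent of $X_{\tau_A}$ (setting $\lambda = 0$ also identifies the law of $X_{\tau_A}$); note that $1/(1-p) = G^{D_N}(x,x)$ when $A = \partial D_N$, in accordance with the mean of $\ell_x^{\tau_A}$. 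The one point deserving a little care is the i.i.d.\ excursion decomposition behind the product law of $(N, X_{\tau_A})$; this is a routine application of the strong Markov property at the successive return times to $x$, so no genuine difficulty is expected.
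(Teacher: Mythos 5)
Your proof is correct, and it is the standard argument for this classical fact: condition on the embedded jump chain, note that the holding times at $x$ are i.i.d.\ $\mathrm{Exp}(1)$ and independent of the skeleton, show via the strong Markov property at successive returns to $x$ that the number $N$ of visits and $X_{\tau_A}$ are independent with $N$ geometric, and identify the geometric sum of exponentials as exponential by Laplace transform. The paper does not reprove this; it simply cites \cite[Lemma 4.2.1]{jego2020}, where essentially this same argument is carried out, so your write-up matches the intended proof.
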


We now fix $x,y \in D_N$, $R \in (2^p)_{p \geq 1} \cap [N^{1/2-a/4},\eps N]$ such that $x/N,y/N \in D^\eps$ and such that $y \notin x + [-R,R]^2$ and we describe the joint law of $(\ell_x^{\tau_{\partial D_N}},\ell_y^{\tau_{\partial D_N}}, A_N(x \to R), A_N(y \to R))$.
For $i\geq 1$, we denote by $\ell_x^i$ (resp. $\ell_y^i$) the local time at $x$ (resp. $y$) accumulated during the $i$-th excursion from $x$ to $C_R(x)$ (resp. from $y$ to $C_R(y)$). We have
\begin{equation}
\label{eq:app_joint_law}
\ell_x^{\tau_{\partial D_N}} = \sum_{i=1}^{A_N(x\to R)} \ell_x^i
\mathrm{~and~}
\ell_y^{\tau_{\partial D_N}} = \sum_{i=1}^{A_N(y\to R)} \ell_y^i.
\end{equation}
By Markov property and by Lemma \ref{lem:app_indep_local_time}, conditioned on $A_N(x \to R)$ and $A_N(y \to R)$,
the variables $\ell_x^i, i =1 \dots A_N(x \to R)$ and $\ell_y^i, i =1 \dots A_N(y \to R)$ are i.i.d. exponential random variables with mean equal to
\begin{equation}
\label{eq:app_9}
\EXPECT{x}{\ell_x^{\tau_{C_R(x)}}} = \left( 1 + O \left( \frac{1}{\log N} \right) \right) g \log R = \left( 1 + O \left( \frac{1}{\log N} \right) \right) g(1-q_R) \log N.
\end{equation}
Moreover, by \eqref{eq:app_approx_pxy}, for all $k \geq 1$,
\begin{align}
\PROB{Nx_0}{A_N(x \to R) \geq k} & = \PROB{Nx_0}{\ell_x^{\tau_{\partial D_N}} >0} \PROB{x}{ \ell_x^{\tau_{\partial D_N}} - \ell_x^{\tau_{C_R(x)}} >0}^{k-1} \nonumber \\
& = \left( 1 + O \left( \frac{1}{\log N} \right) \right)^{k-1} \PROB{Nx_0}{\ell_x^{\tau_{\partial D_N}} >0} q_R^{k-1}.
\label{eq:app_6}
\end{align}
Similarly, we notice that if $c \abs{x-y} \leq R \leq \abs{x-y} / 10$, then Lemma \ref{lem:app_hitting_proba} and \eqref{eq:app_approx_pxy} show that
\begin{align}
\label{eq:app_11}
& \PROB{Nx_0}{A_N(x\to R) + A_N(y\to R) = k} \\
& \leq \PROB{Nx_0}{\tau_x \wedge \tau_y < \tau_{\partial D_N}}
\left( \frac{2q_R}{1+q_R} \right)^{k-1} \left( 1 + O \left( \frac{1}{\log N} \right) \right)^{k-1}. \nonumber
\end{align}

%

Finally, we state for ease of reference the following two elementary inequalities:
\begin{align}
\label{eq:app_elem1}
\mathrm{if~} \mu \leq 1&, \sum_{i=n}^\infty \frac{(\mu n)^i}{i!} \leq (\mu e)^n,\\
\label{eq:app_elem2}
\mathrm{if~} \mu \geq 1&, \sum_{i=0}^{n-1} \frac{(\mu n)^i}{i!} \leq e(\mu e)^{n-1}.
\end{align}
We will moreover denote $\Gamma(k,1)$ a Gamma random variable with shape parameter $k$ and scale parameter $1$. This variable has the same law as the sum of $k$ independent exponential variables with parameter 1. Recall that for all $k,k' \geq 1$ and $t >0$,
\begin{equation}
\label{eq:app_gamma}
\Prob{ \Gamma(k,1) > t } = e^{-t} \sum_{i=0}^{k-1} \frac{t^i}{i!}
\end{equation}
and
\begin{align}
\nonumber
\Prob{\Gamma(k,1) \geq t} \Prob{\Gamma(k',1) \geq t}
& = e^{-2t} \sum_{n=0}^{k+k'-2} t^n \sum_{\substack{0 \leq i \leq k-1\\0 \leq j \leq k'-1\\i+j=n}} \frac{1}{i!j!} \\
& \leq e^{-2t} \sum_{n=0}^{k+k'-2} t^n \sum_{\substack{i,j \geq 0\\i+j=n}} \frac{1}{i!j!}
= e^{-2t} \sum_{n=0}^{k+k'-2} \frac{(2t)^n}{n!}
\label{eq:app_10}
\end{align}

We are now ready to prove Lemmas \ref{lem:app_first_moment_unchanged} and \ref{lem:app_second_moment}.

\begin{proof}[Proof of Lemma \ref{lem:app_first_moment_unchanged}]
Firstly, by Lemma \ref{lem:1-point},
\[
\lim_{\eps \to 0} \sup_{N \geq 1} \expect_{Nx_0}^{D_N} \left[ \frac{\log N}{N^{2 - a}} \sum_{x \in \Z^2} \indic{x/N \notin D^\eps} \indic{ \ell_x^{\tau_{\partial D_N}} \geq g a \log^2 N} \right] = 0.
\]
So we only need to show that
\begin{equation}
\label{eq:app_goal1}
\lim_{\eps \to 0} \sup_{N \geq 1} \expect_{Nx_0}^{D_N} \left[ \frac{\log N}{N^{2 - a}} \sum_{x \in \Z^2} \indic{x/N \in D^\eps} \mathbf{1}_{G_N^{b,\eps}(x)^c} \indic{ \ell_x^{\tau_{\partial D_N}} \geq g a \log^2 N} \right] = 0.
\end{equation}
Let $x \in \Z^2$ s.t. $x/N \in D^\eps$. By a union bound,
\begin{align}
\label{eq:app_2}
& \prob_{Nx_0}^{D_N} \left( \ell_x^{\tau_{\partial D_N}} \geq g a \log^2 N, G_N^{b,\eps}(x)^c \right) \\
& \leq \sum_{\substack{R \in (2^p)_{p \geq 1}\\N^{1/2-a/4} \leq R \leq \eps N}} \prob_{Nx_0}^{D_N} \left( \ell_x^{\tau_{\partial D_N}} \geq g a \log^2 N, A_N(x \to R) > \frac{b}{2} \frac{1+q_R}{1-q_R} \log \frac{N}{R} \right). \nonumber
\end{align}
Let $R \in (2^{-p})_{p \geq 1} \cap [N^{1/2-a/4}, \eps N]$. 
In the discussion following Lemma \ref{lem:app_hitting_proba} we described the joint law of $(\ell_x^{\tau_{\partial D_N}}, A_N(x \to R))$. Using the notations therein and by \eqref{eq:app_gamma}, we have
\begin{align*}
& \prob_{Nx_0}^{D_N} \left( \ell_x^{\tau_{\partial D_N}} \geq g a \log^2 N, A_N(x \to R) > \frac{b}{2} \frac{1+q_R}{1-q_R} \log \frac{N}{R} \right) \\
& = O(1) \prob_{Nx_0}^{D_N} \left( \ell_x^{\tau_{\partial D_N}} >0 \right) (1- q_R) \\
& \times \sum_{k > \frac{b}{2} \frac{1+q_R}{1-q_R} \log \frac{N}{R}} \left( 1 + O \left( \frac{1}{\log N} \right) \right)^{k-1} q_R^{k-1} \Prob{ \Gamma(k,1) \geq \frac{a\log N}{1-q_R} \left( 1 + O \left( \frac{1}{\log N} \right) \right) } \\
& = O(1) \frac{G^{D_N}(Nx_0,x)}{G^{D_N}(x,x)} (1- q_R) e^{-a\log N/(1-q_R) } \\
& \times \sum_{k > \frac{b}{2} \frac{1+q_R}{1-q_R} \log \frac{N}{R}} \left( 1 + O \left( \frac{1}{\log N} \right) \right)^{k-1} q_R^{k-1} \sum_{i=0}^{k-1} \frac{1}{i!} \left( \frac{a\log N}{1-q_R} \right)^i \\
& = O(1) \frac{G^{D_N}(Nx_0,x)}{G^{D_N}(x,x)} e^{-a\log N/(1-q_R) } \Bigg( q_R^{\frac{b}{2} \frac{1+q_R}{1-q_R} \log \frac{N}{R}} \sum_{i = 0}^{\frac{b}{2} \frac{1+q_R}{1-q_R} \log \frac{N}{R}-1} \frac{1}{i!} \left( \frac{a\log N}{1-q_R} \right)^i \\
& ~~~~~~~~~~~~~~~~~~~~~~~~~~~~~~~~~~~~~~~~~~~~~~+ \sum_{i \geq \frac{b}{2} \frac{1+q_R}{1-q_R} \log \frac{N}{R}} \frac{1}{i!} \left( q_R \frac{a\log N}{1-q_R} \right)^i \left( 1 + O \left( \frac{1}{\log N} \right) \right)^i
\Bigg)
\end{align*}
We are going to bound each individual term of the above expression. Let
\[
q_{ab} := \sup \left\{ q \in (0,1): \frac{a}{q} \geq \frac{b(1+q)}{2} \right\} < 1.
\]
There exists $\eta = \eta(a,b)$ such that for all $q \in [q_{ab}, 1], \log q \leq q-1 - \eta (q-1)^2$. We deduce that if $q_R \in [q_{ab},1]$,
\begin{align*}
N^a q_R^{\frac{b}{2} \frac{1+q_R}{1-q_R} \log \frac{N}{R}} & = \exp \left[ \left( \frac{a}{q_R} + \frac{b}{2} \frac{1+q_R}{1-q_R} \log q_R \right) \log \frac{N}{R} \right] \\
& \leq \exp \left[ \left( \frac{a}{q_R} -\frac{b(1+q_R)}{2} - \frac{\eta b(1-q_R^2)}{2} \right) \log \frac{N}{R} \right] 
\leq \exp \left[ - \eta' \log \frac{N}{R} \right]
\end{align*}
for some $\eta' = \eta'(a,b)>0$. Hence, if $q_R \in [q_{ab},1]$, we have
\[
e^{-a\log N/(1-p_R) } q_R^{\frac{b}{2} \frac{1+q_R}{1-q_R} \log \frac{N}{R}} \sum_{i = 0}^{\frac{b}{2} \frac{1+q_R}{1-q_R} \log \frac{N}{R}-1} \frac{1}{i!} \left( \frac{a\log N}{1-q_R} \right)^i
\leq q_R^{\frac{b}{2} \frac{1+q_R}{1-q_R} \log \frac{N}{R}} \leq N^{-a} \left( \frac{N}{R} \right)^{-\eta'}.
\]
If $q_R < q_{ab}$, we use \eqref{eq:app_elem2} and we get
\begin{align*}
& e^{-a\log N/(1-q_R) } q_R^{\frac{b}{2} \frac{1+q_R}{1-q_R} \log \frac{N}{R}} \sum_{i = 0}^{\frac{b}{2} \frac{1+q_R}{1-q_R} \log \frac{N}{R}-1} \frac{1}{i!} \left( \frac{a\log N}{1-q_R} \right)^i \\
& \leq O(1) e^{-a\log N/(1-q_R) } \left( \frac{2ae}{b(1+q_R)} \right)^{\frac{b}{2} \frac{1+q_R}{1-q_R} \log \frac{N}{R}} \\
& = O(1) N^{-a} \exp \left[ \left( -a + \frac{b}{2}(1+q_R) \left( 1 + \log \frac{a}{b} + \log \frac{2}{1+q_R} \right) \right) \frac{1}{1-q_R} \log \frac{N}{R} \right].
\end{align*}
We notice that
\[
q \in [0,1] \mapsto -a + \frac{b}{2}(1+q) \left( 1 + \log \frac{a}{b} + \log \frac{2}{1+q} \right) 
\]
increases on $[0, 2a/b-1]$, hits $0$ at $2a/b-1$ and decreases on $[2a/b-1,1]$. If $b$ is close enough to $a$, for all $R \geq N^{1/2-a/4}$,
\[
q_R \leq 1/2 + a/4 < 2a/b -1.
\]
We deduce that if $q_R < q_{ab}$,
\[
e^{-a\log N/(1-q_R) } q_R^{\frac{b}{2} \frac{1+q_R}{1-q_R} \log \frac{N}{R}} \sum_{i = 0}^{\frac{b}{2} \frac{1+q_R}{1-q_R} \log \frac{N}{R}-1} \frac{1}{i!} \left( \frac{a\log N}{1-q_R} \right)^i
\leq
N^{-a} \left( \frac{N}{R} \right)^{-\eta'}
\]
for some $\eta'' = \eta''(a,b)$.
Finally, we use \eqref{eq:app_elem1} to bound
\begin{align*}
& e^{-a\log N/(1-q_R) }
\sum_{i \geq \frac{b}{2} \frac{1+q_R}{1-q_R} \log \frac{N}{R}} \frac{1}{i!} \left( q_R \frac{a\log N}{1-q_R} \right)^i \left( 1 + O \left( \frac{1}{\log N} \right) \right)^i \\
& \leq e^{-a\log N/(1-q_R) } \left( \frac{2ae}{b(1+q_R)}  \right)^{\frac{b}{2} \frac{1+q_R}{1-q_R} \log \frac{N}{R}}
\end{align*}
which is smaller than $N^{-a} (N/R)^{-\eta''}$
according to the previous estimates. Putting things together, we have obtained
\begin{align*}
& \prob_{Nx_0}^{D_N} \left( \ell_x^{\tau_{\partial D_N}} \geq g a \log^2 N, A_N(x \to R) > \frac{b}{2} \frac{1+q_R}{1-q_R} \log \frac{N}{R} \right)
\leq \frac{G^{D_N}(Nx_0,x)}{G^{D_N}(x,x)} N^{-a} \left( \frac{N}{R} \right)^{-\eta'\wedge \eta''}.
\end{align*}
Coming back to \eqref{eq:app_2}, it shows that
\[
\PROB{Nx_0}{ \ell_x^{\tau_{\partial D_N}} \geq g a \log^2N, G_N^{b,\eps}(x)^c } \leq  p(\eps) \frac{G^{D_N}(Nx_0,x)}{G^{D_N}(x,x)} N^{-a}
\]
for some $p(\eps)>0$ depending on $a,b,\eps$ going to $0$ when $\eps \to 0$. This concludes the proof.
\end{proof}

\begin{proof}[Proof of Lemma \ref{lem:app_second_moment}]
We have
\begin{equation}
\label{eq:app_4}
\Expect{ \bar{\mu}_{x_0;N}^{D,a}(\C)^2 } =
\frac{\log^2 N}{N^{4-2a}} \sum_{x,y \in \Z^2} \indic{x/N, y/N \in D^\eps} \PROB{Nx_0}{ \ell_x^{\tau_{\partial D_N}}, \ell_y^{\tau_{\partial D_N}} \geq g a \log^2 N, G_N^{b,\eps}(x), G_N^{b,\eps}(y) }.
\end{equation}
The contribution to the above sum of points $x,y$ satisfying $\abs{x-y} \leq N^{1/2-a/4}$ goes to zero. Indeed, thanks to the first moment estimate of Property \ref{prop:tightness}, it is at most
\[
\frac{\log^2 N}{N^{4-2a}} N^{1-a/2} \sum_{x \in \Z^2} \PROB{Nx_0}{\ell_x^{\tau_{\partial D_N}} \geq ga \log^2 N} = \frac{\log N}{N^{1-a/2}} \Expect{\mu^{D_N}_{x_0}(\C) } \leq C \frac{\log N}{N^{1-a/2}}
\]
which goes to zero since $a < 2$.
We now take $x,y \in \Z^2$ such that $x/N,y/N \in D^\eps$ and $\abs{x-y} > N^{1/2-a/4}$. The goal is to bound the probability written in \eqref{eq:app_4}. Take $R \in (2^p)_{p \geq 1} \cap [N^{1/2-a/4}, \eps N]$ so that
\[
c \abs{x-y} \leq R \leq \abs{x-y}/10
\]
with $c>0$ which may depend on $\eps$ and on the domain $D$. Notice that with this choice of $R$ and because $\abs{x-y} > N^{1/2-a/4}$, the quantity $q_R$ defined in \eqref{eq:app_def_qR} stays bounded away from $1$. Now, the probability in \eqref{eq:app_4} is at most
\begin{equation}
\label{eq:app_5}
\PROB{Nx_0}{\ell_x^{\tau_{\partial D_N}}, \ell_y^{\tau_{\partial D_N}} \geq g a \log^2 N, A_N(x\to R), A_N(y\to R) \leq \frac{b}{2} \frac{1+q_R}{1-q_R} \log \frac{N}{R} }.
\end{equation}
We described the joint law of $(\ell_x^{\tau_{\partial D_N}},\ell_y^{\tau_{\partial D_N}}, A_N(x \to R), A_N(y\to R))$ in the discussion following Lemma \ref{lem:app_hitting_proba}. With the notations therein and with \eqref{eq:app_10}, the probability \eqref{eq:app_5} is equal to
\begin{align*}
& \sum_{\substack{1 \leq k_x \leq \frac{b}{2} \frac{1+q_R}{1-q_R} \log \frac{N}{R}\\1 \leq k_y \leq \frac{b}{2} \frac{1+q_R}{1-q_R} \log \frac{N}{R}}} \PROB{Nx_0}{A_N(x\to R) = k_x, A_N(y\to R) = k_y} \\
& ~~~~~~~~~~~~~~~\times \Prob{\Gamma(k_x,1), \Gamma(k_y,1) \geq \left( 1 + O \left( \frac{1}{\log N} \right) \right) a \log N /(1-q_R)} \\
& \leq O(1) e^{-2a\log N /(1-q_R)} \sum_{2 \leq k \leq b \frac{1+q_R}{1-q_R} \log \frac{N}{R}} \PROB{Nx_0}{A_N(x\to R) + A_N(y\to R) = k} \\
& ~~~~~~~~~~~~~~~\times \sum_{i=0}^{k-2} \frac{1}{i!} \left( \frac{2a\log N}{1-q_R} \right)^i
\end{align*}
With \eqref{eq:app_11}, we get that the probability \eqref{eq:app_5} is at most
\begin{align*}
& O(1) e^{-2a\log N /(1-q_R)} \PROB{Nx_0}{\tau_x \wedge \tau_y < \tau_{\partial D_N}} \\
& \times \sum_{2 \leq k \leq b \frac{1+q_R}{1-q_R} \log \frac{N}{R}} 
\left( \frac{2q_R}{1+q_R} \right)^{k-1}
\sum_{i=0}^{k-2} \frac{1}{i!} \left( \frac{2a\log N}{1-q_R} \right)^i \\
& = O(1) e^{-2a\log N /(1-q_R)} \PROB{Nx_0}{\tau_x \wedge \tau_y < \tau_{\partial D_N}} \frac{q_R}{1-q_R} \sum_{i=0}^{b \frac{1+q_R}{1-q_R} \log \frac{N}{R} -2} \frac{1}{i!} \left( \frac{4a q_R\log N}{1-q_R^2} \right)^i
\end{align*}
by exchanging the two sums. We now use \eqref{eq:app_elem2} with
\[
\mu = \frac{4aq_R\log N}{1-q_R^2} \frac{1-q_R}{b(1+q_R) \log (N/R)} = \frac{4a}{b} \frac{1}{(1+q_R)^2}
\]
which is bigger than $1$ if $b$ is close enough to $a$ (recall that $q_R$ stays bounded away from 1). We obtain that the probability \eqref{eq:app_5} is at most
\begin{align*}
& O(1) \PROB{Nx_0}{\tau_x \wedge \tau_y < \tau_{\partial D_N}} q_R  e^{-2a\log N /(1-q_R)} \left( e \frac{4a}{b} \frac{1}{(1+q_R)^2} \right)^{b \frac{1+q_R}{1-q_R} \log \frac{N}{R}} \\
& = O(1) \PROB{Nx_0}{\tau_x \wedge \tau_y < \tau_{\partial D_N}} q_R N^{-2a} \\
& \times \exp \left[ \left( -2a + b (1+q_R) \left( 1 + \log \frac{a}{b} + 2 \log \frac{2}{1+q_R} \right) \right) \frac{1}{1-q_R} \log \frac{N}{R} \right] \\
& \leq O(1) \PROB{Nx_0}{\tau_x \wedge \tau_y < \tau_{\partial D_N}} q_R N^{-2a} \exp \left[ \left( -2a + b(1+q_R) \left( \frac{a}{b} + 2 \frac{1-q_R}{1+q_R} \right) \right) \frac{1}{1-q_R} \log \frac{N}{R} \right] \\
& = O(1) \PROB{Nx_0}{\tau_x \wedge \tau_y < \tau_{\partial D_N}} q_R N^{-2a} \left( \frac{N}{R} \right)^{2b-a}.
\end{align*}
To wrap things up, we have obtained
\begin{align*}
& \frac{\log^2N}{N^{4-2a}} \sum_{x,y \in \Z^2} \indic{x/N, y/N \in D^\eps, \abs{x-y} \geq N^{1/2-a/4}} \PROB{Nx_0}{\ell_x^{\tau_{\partial D_N}}, \ell_y^{\tau_{\partial D_N}} \geq ga \log^2N, G_N^{b,\eps}(x), G_N^{b,\eps}(y) } \\
& \leq \frac{O(1)}{N^4} \sum_{x,y \in \Z^2} \indic{x/N, y/N \in D^\eps, \abs{x-y} \geq N^{1/2-a/4}} \log \frac{N}{\abs{x-Nx_0}} \log \frac{N}{\abs{x-y}} \left( \frac{N}{\abs{x-y}} \right)^{2b-a}
\end{align*}
which is bounded uniformly in $N$ if $b$ is chosen close enough to $a$ so that $2b-a < 2$. The energy estimate \eqref{eq:lem_app_energy} follows as well. This finishes to prove Lemma \ref{lem:app_second_moment}.
\end{proof}

\section{Joint convergence of measures and trajectories}\label{sec:joint}

In this section, we state a natural extension of Theorem \ref{th:convergence} that follows from our approach. Theorem \ref{th:extension} below extends Theorem \ref{th:convergence} in two directions. It considers the joint convergence of the measure together with the associated random walk and it considers finitely many independent random walk trajectories. This generalisation plays a crucial role in the paper \cite{ABJL21} which studies a multiplicative chaos associated to Brownian loop soup.

Let $\Dc \Xc \Zc = \{ (D^i, x_i, z_i), i = 1 \dots r \} \in \Sc$ be a collection of domains with starting points and ending points. Let $X^{(i)} = (X^{(i)}_t, 0 \leq t \leq \tau^i), i =1 \dots r$, be $r$ independent random walks distributed according to $\P_{Nx_i, Nz_i}^{D_N^i}$ and, for any $\Dc'\Xc'\Zc' = \{(D^i,x_i,z_i), i \in I\} \subset \Dc\Xc\Zc$, recall the definition \eqref{eq:def_multipoint_discrete} of the measure $\mu_{\Xc',\Zc';N}^{\Dc',a}$ encoding the set of $a$-thick points coming from the interaction of the random walks $X^{(i)}$, $i \in I$. We rescale the walk $X^{(i)}$ in time and space and define $X_N^{(i)} = ( N^{-1} X^{(i)}_{N^2 t}, 0 \leq t \leq \tau^i / N^2 )$.

To give a precise meaning of the convergence of the above random walks towards Brownian motion, we need to define a topology on the set $\Pc$ of càdlàg paths in $\R^2$ with finite durations. If $(\wp^1_t, 0\leq t \leq T^1)$ and $(\wp^2_t, 0\leq t \leq T^2)$ are two such paths, we define the distance
\[
d( \wp^1, \wp^2 ) := | \log (T^1 / T^2) | + d_\text{Sk}( (\wp^1_{t T^1}, 0 \leq t \leq 1), (\wp^2_{t T^2}, 0 \leq t \leq 1) )
\]
where $d_\text{Sk}$ denotes the Skorokhod distance between càdlàg functions defined on $[0,1]$ with values in $\R^2$ (see e.g. Section 12 of \cite{billingsley}). We equip the set $\Pc$ with the topology associated to that distance.

Finally, for any Borel set $U \subset \R^2$, we will denote by $\mathfrak{M}(U)$ the set of Borel measures on $U$ equipped with the topology of vague convergence on $U$.

\begin{theorem}\label{th:extension}
For any $\Dc \Xc \Zc \in \Sc$,
\[
\left( \mu_{\Xc',\Zc';N}^{\Dc',a}, \Dc'\Xc'\Zc' \subset \Dc \Xc \Zc, X_N^{(i)}, i = 1 \dots r \right) \in \prod_{\Dc' \subset \Dc} \mathfrak{M} \left( \bigcap_{D' \in \Dc'} D' \right) \times \Pc^r
\]
converges weakly relative to the product topology to
\[
\left( e^{c_0 a/g} \Mc_{\Xc',\Zc'}^{\Dc',a}, \Dc'\Xc'\Zc' \subset \Dc \Xc \Zc, B^{(i)}, i = 1 \dots r \right),
\]
where $B^{(i)}$, $i = 1\dots r$, are independent Brownian paths distributed according to $\P_{x_i, z_i}^{D^i}$, $i=1 \dots r$, and $\Mc_{\Xc',\Zc'}^{\Dc',a}$, $\Dc'\Xc'\Zc'$, are the multipoint Brownian chaos associated to $B^{(i)}$, $i=1 \dots r$, defined in Section \ref{subsec:Brownian_multiplicative_chaos}.
\end{theorem}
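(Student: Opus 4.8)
The plan is to run the same strategy as for Theorem~\ref{th:convergence}, now keeping track of the random walks throughout, and to check at the end that the limiting measures are the \emph{correct} measurable functions of the limiting Brownian motions (and not independent copies). First I would establish tightness for the product topology: the measure components $(\mu_{\Xc',\Zc';N}^{\Dc',a},\Dc'\Xc'\Zc'\subset\Dc\Xc\Zc)$ are tight by Proposition~\ref{prop:tightness}, while the rescaled walks $X_N^{(i)}$ are tight in $\Pc$ and converge to independent Brownian motions distributed according to $\P_{x_i,z_i}^{D^i}$ by the invariance principle for simple random walk together with the convergence of the exit times $\tau^i/N^2$ (the conditioning on the exit point passing to the limit by the same $h$-transform argument used in the proof of Lemma~\ref{lem:uncountable_extraction}). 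Hence along any subsequence one may pass to a joint weak limit $\big((\mu_{\Xc',\Zc'}^{\Dc',a})_{\Dc'\Xc'\Zc'\subset\Dc\Xc\Zc},(B^{(i)})_{i=1\dots r}\big)$.

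Starting from such a subsequence, I would invoke Lemma~\ref{lem:uncountable_extraction} — whose proof in fact produces the joint convergence of the discrete multipoint measures \emph{together with} the associated rescaled walks, see \eqref{eq:proof_lem_uncountable} — to extract a further subsequence along which, simultaneously for all $\Dc'\Xc'\Zc'\in\Sc$, the families $(\mu_{\Xc,\Zc;N}^{\Dc,a},\Dc\Xc\Zc\subset\Dc'\Xc'\Zc')$ converge jointly with the rescaled walks they are built from. By the argument already carried out in the proof of Theorem~\ref{th:convergence} (Proposition~\ref{prop:uniform_integrability} for the average value and the carrying dimension, and the discrete Markov property for Properties~\ref{charac2}--\ref{charac2bis}), the limiting process of measures $(\mu_{\Xc,\Zc}^{\Dc,a},\Dc\Xc\Zc\in\Sc)$ satisfies Properties~\ref{charac1}--\ref{charac3}, so by Theorem~\ref{th:charac} it has the law of $(e^{c_0a/g}\Mc_{\Xc,\Zc}^{\Dc,a})$. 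It remains to identify the \emph{joint} law of this process with the trajectories $B^{(i)}$, i.e.\ to show that $\mu_{\Xc,\Zc}^{\Dc,a}=e^{c_0a/g}\Mc_{\Xc,\Zc}^{\Dc,a}$ almost surely, the right-hand side being the Brownian chaos built from these very $B^{(i)}$.

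For this I would fix a good slicing direction (Lemma~\ref{lem:charac3old}, applicable since the limiting measures are non-atomic) and pass the exact discrete decomposition of $\mu_{\Xc,\Zc;N}^{\Dc,a}$ along the successive crossings of the grid $2^{-p}\Z\times\R$ by the walks — the discrete analogue of \eqref{eq:decomposition} — to the joint limit: since the grid crossing points of $X_N^{(i)}$ converge to those of $B^{(i)}$ and all the pieces converge jointly, the limiting measures satisfy the continuous slicing decomposition with respect to the Brownian crossing points. Conditioning on the $\sigma$-algebra $\Fc_p$ generated by these crossing points at scale $2^{-p}$ and using Property~\ref{charac1} to evaluate the conditional expectation, one gets that $\mu_p:=\Expect{\mu_{\Xc,\Zc}^{\Dc,a}(dx)\,\vert\,\Fc_p}$ is given by the explicit formula \eqref{eq:charac_def_mup2} (up to the global factor $e^{c_0a/g}$ coming from Lemma~\ref{lem:1-point}), hence is a $\sigma(B^{(1)},\dots,B^{(r)})$-measurable martingale, and the reverse-Jensen argument from the uniqueness part of Theorem~\ref{th:charac} yields $\mu_{\Xc,\Zc}^{\Dc,a}=\lim_p\mu_p$ almost surely. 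Applying that same uniqueness argument to $\Mc_{\Xc,\Zc}^{\Dc,a}$ itself — whose Markov decomposition via Property~\ref{charac2} uses precisely the hitting points of the underlying Brownian paths — shows that $e^{c_0a/g}\Mc_{\Xc,\Zc}^{\Dc,a}$ is the almost sure limit of the \emph{same} explicit martingale built from the $B^{(i)}$. Therefore $\mu_{\Xc,\Zc}^{\Dc,a}=e^{c_0a/g}\Mc_{\Xc,\Zc}^{\Dc,a}$ almost surely, simultaneously for all sub-collections and jointly with the $B^{(i)}$, which pins down the subsequential limit and hence gives the stated convergence. (Alternatively, once measurability with respect to the $B^{(i)}$ has been secured, one may conclude through the rooted-measure characterisation of Proposition~\ref{prop:other_charac}, after verifying the discrete rooted-measure identity and passing it to the limit using the weak convergence of the walks and the first-moment bounds of Lemma~\ref{lem:1-point}.) The main obstacle is exactly this last step: transferring the discrete Markov decomposition to the joint weak limit and controlling the sizes of the pieces via the Cantor-type set $K^\infty$ of the proof of Theorem~\ref{th:charac}, so as to identify the limiting measure as the correct function of the limiting trajectories.
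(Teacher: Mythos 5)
Your proposal is correct and follows essentially the same strategy as the paper. The only genuine difference is one of packaging: where you unroll the uniqueness argument in situ (pass the discrete Markov decomposition along grid crossings to the joint weak limit, condition on the crossing $\sigma$-algebra $\Fc_p$ of the limiting Brownian paths, build the explicit martingale $\mu_p$, and run the reverse-Jensen argument twice — once on the subsequential limit $\mu^{\Dc,a}_{\Xc,\Zc}$ and once on $e^{c_0a/g}\Mc^{\Dc,a}_{\Xc,\Zc}$ — to conclude both are a.s.\ equal to the same martingale limit), the paper instead abstracts this into a stand-alone joint characterisation, Theorem~\ref{th:charac'}, which augments Properties~\ref{charac1}--\ref{charac3} to Properties~\ref{charac1'}--\ref{charac4'} so that the Markov and independence properties track the trajectories alongside the measures, and then simply invokes it. The abstraction buys modularity and makes explicit the point the paper emphasises — that the characterisation must not presuppose measurability of the measures with respect to the paths (this is precisely why your parenthetical alternative via Proposition~\ref{prop:other_charac} can only be used \emph{after} measurability has been secured, as you correctly note). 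Your spelled-out martingale identification establishes that measurability along the way and is in fact more explicit than the paper's statement ``the proof of Theorem~\ref{th:charac'} is similar to the proof of Theorem~\ref{th:charac} and we omit it''; the two routes are substantively equivalent.
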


We now explain the slight modifications needed in order to prove Theorem \ref{th:extension}.
Firstly, extending the convergence of Theorem \ref{th:extension} to the case of finitely many trajectories does not require any modification. Indeed, Proposition \ref{prop:intersection} shows tightness of the sequence 
$\left( \mu_{\Xc',\Zc';N}^{\Dc',a}, \Dc'\Xc'\Zc' \subset \Dc \Xc \Zc \right)$, $N \geq 1$.
Let $\left( \mu_{\Xc',\Zc'}^{\Dc',a}, \Dc'\Xc'\Zc' \subset \Dc \Xc \Zc \right)$ be any subsequential limit. By Lemma \ref{lem:uncountable_extraction}, we can extract a further subsequence and we obtain an uncountable family $\left( \mu_{\Xc',\Zc'}^{\Dc',a}, \Dc'\Xc'\Zc' \in \Sc \right)$ of measures. As shown in the proof of Theorem \ref{th:convergence}, this family satisfies Properties \ref{charac1} - \ref{charac3} (up to the multiplicative factor $e^{c_0 a/g}$) which characterise the law of $\left( \Mc_{\Xc',\Zc'}^{\Dc',a}, \Dc'\Xc'\Zc' \in \Sc \right)$ by Theorem \ref{th:charac}.

It remains to explain how to deal with the joint convergence of the measures together with the underlying random walks. We proceed again by first showing tightness and then identifying the law of the subsequential limits. Tightness is clear since each component converges (we have already seen that the measures converge, and the random walks converge by Donsker invariance principle).
The study of the law of the subsequential limit is then very similar to what we have done, as soon as we have an appropriate generalisation of Theorem \ref{th:charac} that we explain below in details.

This time, we want to characterise the law of
$( (\Mc_{\Xc, \Zc}^{\Dc, a}, B_{\Xc, \Zc}^\Dc), \Dc \Xc \Zc \in \Sc )$, where for any $\Dc \Xc \Zc = \{ (D^i, x_i, z_i), i =1 \dots r \} \in \Sc$, we denoted by $B_{\Xc, \Zc}^\Dc$ the collection $(B_{x_i, z_i}^{D^i}, i =1 \dots r)$ of independent Brownian trajectories associated to the measures.
Consider a stochastic process
\begin{equation}
\label{eq:stoc_pro}
\Dc \Xc \Zc \in \Sc \mapsto (\mu_{\Xc, \Zc}^{\Dc, a}, B_{\Xc, \Zc}^\Dc) \in \mathfrak{M} \left( \bigcap_{D \in \Dc} D \right) \times \Pc^{\# \Dc \Xc \Zc}
\end{equation}
and the following properties:

\begin{enumerate}[label=(\subscript{P}{{\arabic*}}')]
\item(Average value)
\label{charac1'}
For all $\Dc\Xc\Zc = \left\{ (D_i,x_i,z_i), i=1 \dots r \right\} \in \Sc$ and for all Borel set $A \subset \C$,
\begin{align*}
& \Expect{ \mu^{\Dc,a}_{\Xc,\Zc}(A) }
= \int_A dx \int_{\mathsf{a} \in E(a,r)} d \mathsf{a} \prod_{k=1}^r \psi_{x_k,z_k}^{D_k,a_k}(x).
\end{align*}
\item(Markov property)
\label{charac2'}
Let $\Dc\Xc\Zc \in \Sc$, $(D,x_0,z) \in \Dc\Xc\Zc$ and let $D'$ be a nice subset of $D$ containing $x_0$. Let $Y$ be distributed according to $B_{\tau_{\partial D'}}$ under $\P_{x_0,z}^D$. The joint law of $((\mu_{\Xc',\Zc'}^{\Dc',a}, B_{\Xc',\Zc'}^{\Dc'}), \Dc'\Xc'\Zc' \subset \Dc\Xc\Zc)$ is the same as the joint law given by for all $\Dc'\Xc'\Zc' \subset \Dc\Xc\Zc$,
\[
\left\{
\begin{array}{l}
(\mu_{\Xc',\Zc'}^{\Dc',a}, B_{\Xc',\Zc'}^{\Dc'})
\mathrm{~if~} (D,x_0,z) \notin \Dc'\Xc'\Zc', \\
(\mu_{\bar{\Dc}\bar{\Xc}\bar{\Zc} \cup \{(D',x_0,Y)\} }^a
+ \mu_{\bar{\Dc}\bar{\Xc}\bar{\Zc} \cup \{(D,Y,z)\} }^a
+ \mu_{\bar{\Dc}\bar{\Xc}\bar{\Zc} \cup \{(D',x_0,Y), (D,Y,z)\} }^a , \tilde{B}_{\Xc',\Zc'}^{\Dc'} )
\mathrm{~otherwise},
\end{array}
\right.
\]
where in the second line we denote $\bar{\Dc}\bar{\Xc}\bar{\Zc} = \Dc'\Xc'\Zc' \backslash \{(D,x_0,z)\}$ and $\tilde{B}_{\Xc',\Zc'}^{\Dc'}$ is the collection of trajectories obtained from $B_{\Xc',\Zc'}^{\Dc'}$ as follows. For all $(\bar{D}, \bar{x}_0, \bar{z}) \in \bar{\Dc}\bar{\Xc}\bar{\Zc}$, $B_{\bar{x}_0,\bar{z}}^{\bar{D}}$ is unchanged. $B_{x_0,z}^D$ is replaced by the concatenation of $B_{x_0,Y}^{D'}$ and $B_{Y,z}^D$.
\item(Independence)
\label{charac2bis'}
For all disjoint sets $\Dc\Xc\Zc, \Dc'\Xc'\Zc' \in \Sc$, $(\mu^{\Dc,a}_{\Xc,\Zc}, B^\Dc_{\Xc,\Zc})$ and $(\mu_{\Xc',\Zc'}^{\Dc',a}, B_{\Xc',\Zc'}^{\Dc'})$ are independent.
\item(Non-atomicity)
\label{charac3'}
For all $\Dc\Xc\Zc \in \Sc$, with probability one, simultaneously for all $x \in \C$, $\mu^{\Dc,a}_{\Xc,\Zc}(\{x\}) = 0$.
\item
\label{charac4'}
For all $\{ (D,x_0,z) \} \in \Sc$, $B_{x_0, z}^D \sim \P_{x_0, z}^D$.
\end{enumerate}

\begin{theorem}\label{th:charac'}
The process
$\left ((\Mc^{\Dc,a}_{\Xc,\Zc}, B_{\Xc, \Zc}^\Dc), \Dc\Xc\Zc \in \Sc \right)$
from Section \ref{subsec:Brownian_multiplicative_chaos} satisfies Properties \ref{charac1'}-\ref{charac4'}. Moreover, if
$\left ((\mu^{\Dc,a}_{\Xc,\Zc}, B_{\Xc, \Zc}^\Dc), \Dc\Xc\Zc \in \Sc \right)$
is another process with target spaces as in \eqref{eq:stoc_pro} satisfying Properties \ref{charac1'}-\ref{charac4'}, then it has the same law as
$\left ((\Mc^{\Dc,a}_{\Xc,\Zc}, B_{\Xc, \Zc}^\Dc), \Dc\Xc\Zc \in \Sc \right)$.
\end{theorem}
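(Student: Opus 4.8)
The plan is to mirror the two-part proof of Theorem~\ref{th:charac}, the only genuinely new ingredients being the bookkeeping of the Brownian trajectories and the use of Property~\ref{charac4'} to anchor their law. For the existence half, I would observe that Properties~\ref{charac1'} and~\ref{charac3'} coincide with~\ref{charac1} and~\ref{charac3}, that~\ref{charac4'} is immediate from the construction, and that~\ref{charac2bis'} holds since distinct triples carry independent Brownian motions. For Property~\ref{charac2'} I would rerun the computation with $L^{(0)}, L^{(1)}$ from the proof of~\ref{charac2}: cutting the path $B = B_{x_0,z}^D$ at $\tau_{\partial D'}$ and writing $Y = B_{\tau_{\partial D'}}$, the three families appearing on the second line of~\ref{charac2'} are built from the two pieces of $B$ (together with the untouched trajectories indexed by $\bar{\Dc}\bar{\Xc}\bar{\Zc}$), while $B$ itself is the concatenation $B_{x_0,Y}^{D'} \wedge B_{Y,z}^D$; by the Brownian Markov property, conditionally on $Y$ these two pieces are independent with laws $\P_{x_0,Y}^{D'}$ and $\P_{Y,z}^D$, which is exactly what~\ref{charac2'} asserts.

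\textbf{Uniqueness.} Let $\left((\mu^{\Dc,a}_{\Xc,\Zc}, B_{\Xc,\Zc}^\Dc), \Dc\Xc\Zc \in \Sc\right)$ be another admissible process. First, the law of the trajectory marginal is entirely fixed: by~\ref{charac4'} each $B_{x_0,z}^D$ is distributed as $\P_{x_0,z}^D$, and since any two distinct triples form disjoint elements of $\Sc$, Property~\ref{charac2bis'} makes the trajectories attached to distinct triples independent; hence this marginal agrees in law with that of the Brownian multiplicative chaos process. It remains to identify the conditional law of the measure marginal given the trajectories, and for this I would run the uniqueness proof of Theorem~\ref{th:charac} essentially verbatim, the only change being that the auxiliary Brownian path built there via Kolmogorov's extension theorem is now supplied by the process itself. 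Concretely: fix $(D,x_0,z)$, apply Lemma~\ref{lem:charac3old} (whose proof uses only~\ref{charac1'} and~\ref{charac3'}) to pick a good slicing direction, let $x_i^p$ be the successive crossings of the dyadic grid by $B = B_{x_0,z}^D$, and iterate Property~\ref{charac2'}. Because~\ref{charac2'} also tracks the path, this iteration yields, jointly with $B$, the decomposition $\mu^{D,a}_{x_0,z} \overset{\mathrm{(d)}}{=} \sum_{\Dc\Xc\Zc \subset \Dc^p\Xc^p\Zc^p} \mu^{\Dc,a}_{\Xc,\Zc}$ of~\eqref{eq:decomposition}, with $B$ the concatenation of the pieces $B_{x_i^p, x_{i+1}^p}^{D_i^p}$ and with $\Fc_p$ a sub-$\sigma$-algebra of $\sigma(B)$. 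The martingale step (Lemma~\ref{lem:charac}) and the reverse-Jensen step (Lemma~\ref{lem:reverse_Jensen}), the latter using~\ref{charac2bis'} for conditional independence across the Cantor set $K^\infty$, then go through unchanged and give $\mu^{D,a}_{x_0,z} = \mu_\infty$ a.s., where $\mu_\infty$ is the $\Fc_\infty$-measurable measure of~\eqref{eq:charac_def_mup2}. Since $\mu_\infty$ depends only on data whose law is already determined, $\mu^{D,a}_{x_0,z}$ is a fixed measurable functional of $B_{x_0,z}^D$, the same for every admissible process; the general-marginal case is identical up to heavier notation. As the joint law of all the trajectory collections is fixed, so is the joint law of the enriched process.

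\textbf{Main obstacle.} The one point that needs real care is checking that Property~\ref{charac2'}, which now bundles the measure decomposition with the concatenation of trajectories, is exactly strong enough to reproduce the iteration and the martingale convergence of Theorem~\ref{th:charac} \emph{jointly with the path} --- equivalently, that the grid-crossing filtration generated by $B_{x_0,z}^D$ takes over the role played in the original proof by the Kolmogorov-constructed filtration, and in particular that the coupling between $\mu^{D,a}_{x_0,z}$ and $B$ prescribed by the iterated~\ref{charac2'} is the correct one. Once this identification is granted, no new estimates beyond those already used for Theorem~\ref{th:charac} are needed.
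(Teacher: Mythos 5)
The paper omits this proof, saying only that it is ``similar to the proof of Theorem~\ref{th:charac}''; your argument correctly fleshes out that sketch, following the intended route. In particular you rightly identify the two genuine additions needed: that \ref{charac4'} together with \ref{charac2bis'} pins down the law of the trajectory marginal, and that the trajectory $B_{x_0,z}^D$ supplied by the process can stand in for the Kolmogorov-constructed path, making $\mu_\infty$ a fixed measurable functional of it so the joint law of $(\mu^{D,a}_{x_0,z}, B_{x_0,z}^D)$ is determined.
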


We want to emphasise again that it is crucial that the characterisation does not rely on the measurability of the measures with respect to the Brownian paths.

The proof of Theorem \ref{th:charac'} is similar to the proof of Theorem \ref{th:charac} and we omit it.

\appendix

\section{Multipoint Brownian multiplicative chaos}\label{app:intersection}

This section is devoted to the proof of Propositions \ref{prop:def_measures}, \ref{prop:intersection},  \ref{prop:intersection_multipoint} and \ref{prop:other_charac}.
We start with Proposition \ref{prop:intersection}.

\begin{proof}[Proof of Proposition \ref{prop:intersection}]
We use the notations of Section \ref{subsec:further_results} and for $i=1 \dots r$, we will denote
\[
f^\eps_i(x) := \abs{\log \eps} \eps^{-a_i} \indic{\frac{1}{\eps}L_{x,\eps}^{(i)} \geq 2a_i \abs{\log \eps}^2}.
\]
We recall that
\begin{equation}
\label{eq:proof_th_intersection0}
\lim_{\eps \to 0} \Expect{f^\eps_i(x)} = \psi_{x_i,z_i}^{D_i,a_i}(x)
\end{equation}
and that we can bound
\begin{equation}
\label{eq:proof_th_intersection1}
\sup_{\eps >0} \Expect{f^\eps_i(x)} \leq C \psi_{x_i,z_i}^{D_i,a_i}(x)
\end{equation}
for some $C>0$. See \cite[Proposition 3.1]{jegoGMC}. We moreover recall that for all $\eta >0$, we can decompose
\[
f^\eps_i(x) = \rho^{\eta,\delta;\eps}_i(x) + f^{\eta,\delta;\eps}_i(x)
\]
where
\begin{equation}
\label{eq:proof_prop_intersection8}
\lim_{\delta \to 0} \limsup_{\eps \to 0} \Expect{ \rho^{\eta,\delta;\eps}_i(x) } =0
\end{equation}
and for all $\delta>0$, $x \neq y$,
\begin{equation}
\label{eq:proof_prop_intersection3}
\sup_{\eps >0} \Expect{ f_i^{\eta,\delta;\eps}(x) f_i^{\eta,\delta;\eps}(y) } \leq C_{\eta,\delta} \abs{x-y}^{-a_i-\eta}
\end{equation}
and for all $x \neq y$,
\begin{equation}
\label{eq:proof_prop_intersection4}
\lim_{\eps,\eps' \to 0} \Expect{ (f_i^{\eta,\delta;\eps}(x) - f_i^{\eta,\delta;\eps'}(x)) (f_i^{\eta,\delta;\eps}(y) - f_i^{\eta,\delta;\eps'}(y)) } = 0.
\end{equation}
This follows from the decomposition of the measure using ``good'' and ``bad'' events used in \cite{jegoGMC}. Let us detail this decomposition. Let $\delta >0$ and $b_i>a_i$ be very close to $a_i$ (depending on $\eta$). We introduce the good event (see (21) in \cite{jegoGMC})
\[
G_\eps(x) := \left\{ \forall r \in [\eps,\delta] \cap \{e^{-n}, n \geq 1\} : \frac{1}{r} L_{x,r}^{(i)} \leq 2 b_i |\log r|^2 \right\}
\]
and define
\[
f_i^{\eta,\delta;\eps}(x) := \mathbf{1}_{G_\eps(x)} f_i^\eps(x)
\quad \text{and} \quad
\rho^{\eta,\delta;\eps}_i(x) := (1- \mathbf{1}_{G_\eps(x)}) f_i^\eps(x).
\]
Then \eqref{eq:proof_prop_intersection8} amounts to saying that an $a_i$-thick point is not $b_i$-thick (see \cite[Proposition 3.1]{jegoGMC}), \eqref{eq:proof_prop_intersection3} shows that the measure restricted to good events is bounded in $L^2$ (see (52) of \cite{jegoGMC}) and \eqref{eq:proof_prop_intersection4} is proved in the course of showing that the measure restricted to good events is Cauchy in $L^2$ (see \cite[Proposition 5.1]{jegoGMC}).

We are now going to prove by induction on $r \geq 1$ the claims \textit{(i)}, \textit{(ii)} and \textit{(iv)} of Proposition \ref{prop:intersection} together with the claim that for all $\alpha < 2 - a$ (recall that $a = a_1 + \dots + a_r$), we can decompose
\begin{equation}
\label{eq:proof_prop_intersection5}
\bigcap_{i=1}^r \Mc_{x_i,z_i}^{D_i,a_i} = \rho_\delta + \Mc_\delta
\end{equation}
where $\Expect{\rho_\delta(\C)} \to 0$ as $\delta \to 0$ and for all $\delta >0$
\[
\Expect{ \int_{\C^2} \frac{1}{\abs{x-y}^\alpha} \Mc_\delta(dx) \Mc_\delta(dy) } < \infty.
\]
This latter claim implies \textit{(v)} by Frostman's lemma.
The case $r=1$ follows from \cite{jegoGMC} (in this case, \textit{(ii)} is an empty statement). Let $r \geq 2$ and assume the above results for $r-1$. Let $\alpha, \eta>0$ be such that $a_r < \alpha < \alpha + \eta < 2 - (a_1 + \dots + a_{r-1})$. We can decompose
\[
\bigcap_{i=1}^{r-1} \Mc_{x_i,z_i}^{D_i,a_i} = \rho_\delta + \Mc_\delta
\]
with $\Expect{\rho_\delta(\C)} \to 0$ as $\delta \to 0$ and for all $\delta >0$
\begin{equation}
\label{eq:proof_prop_intersection6}
\Expect{ \int_{\C^2} \frac{1}{\abs{x-y}^{\alpha+\eta}} \Mc_\delta(dx) \Mc_\delta(dy) } < \infty.
\end{equation}
\eqref{eq:proof_prop_intersection3}, \eqref{eq:proof_prop_intersection4} and \eqref{eq:proof_prop_intersection6} show that for all $A \in \Bc(\C)$, and for all $\delta >0$,
\[
\int_A f^{\eta,\delta;\eps}_r(x) \Mc_\delta(dx), \eps >0,
\]
is a Cauchy sequence in $L^2$. This defines a limiting measure $\tilde{\Mc}_\delta$ which satisfies by Fatou's lemma and \eqref{eq:proof_prop_intersection3}
\[
\Expect{ \int_{\C^2} \frac{1}{\abs{x-y}^{\alpha-a_r}} \tilde{\Mc}_\delta(dx) \tilde{\Mc}_\delta(dy) } < \infty.
\]
Moreover,
\[
\lim_{ \delta \to 0} \limsup_{\eps \to 0} \Expect{ \int_\C f^\eps_r(x) \bigcap_{i=1}^{r-1} \Mc_{x_i,z_i}^{D_i,a_i}(dx) - \int_{\C} f_r^{\eta,\delta;\eps}(x) \Mc_\delta(dx) } = 0.
\]
This shows that for all $A \in \Bc(\C)$,
$
\int_A f^\eps_r(x) \bigcap_{i=1}^{r-1} \Mc_{x_i,z_i}^{D_i,a_i}(dx)
$
converges in $L^1$ as $\eps \to 0$ and also shows that the limiting measure can be decomposed as expected in \eqref{eq:proof_prop_intersection5}. 
This concludes the proof of the convergence of the measure \eqref{eq:prop_intersection}. This also shows that for all $A \in \Bc(\C)$, we can exchange the expectation and the limit:
\begin{align*}
\Expect{ \lim_{\eps \to 0} \int_A f^\eps_r(x) \bigcap_{i=1}^{r-1} \Mc_{x_i,z_i}^{D_i,a_i}(dx) }
& = \lim_{\eps \to 0} \int_A \Expect{ f^\eps_r(x) } \Expect{ \bigcap_{i=1}^{r-1} \Mc_{x_i,z_i}^{D_i,a_i}(dx) }.
\end{align*}
Now, using \textit{(iv)} for $r-1$, we see that $\Expect{ \bigcap_{i=1}^{r-1} \Mc_{x_i,z_i}^{D_i,a_i}(dx) } = \prod_{i=1}^{r-1} \psi_{x_i,z_i}^{D_i,a_i}(x) dx$ and then by dominated convergence theorem and \eqref{eq:proof_th_intersection0} and \eqref{eq:proof_th_intersection1}, we obtain that
\begin{align*}
\Expect{ \lim_{\eps \to 0} \int_A f^\eps_r(x) \bigcap_{i=1}^{r-1} \Mc_{x_i,z_i}^{D_i,a_i}(dx) }
& = \lim_{\eps \to 0} \int_A \Expect{ f^\eps_r(x) } \prod_{i=1}^{r-1} \psi_{x_i,z_i}^{D_i,a_i}(x) dx \\
& = \int_A \prod_{i=1}^r \psi_{x_i,z_i}^{D_i,a_i}(x) dx.
\end{align*}

We are now going to show that 
$\bigcap_{i=1}^r \Mc_{x_i,z_i;\eps}^{D_i,a_i}$
converges to the same limiting measure as \eqref{eq:prop_intersection}. For this purpose, it is enough to show that for all $A \in \Bc(\C)$,
\[
\Expect{ \abs{ \int_A \prod_{i=1}^r f^\eps_i(x) dx - \int_A f^\eps_r(x) \bigcap_{i=1}^{r-1} \Mc_{x_i,z_i}^{D_i,a_i}(dx) } }
\]
tends to zero as $\eps \to 0$.
For each $i=1 \dots r$, consider the decomposition
\[
f^\eps_i(x) = \rho^{\eta,\delta;\eps}_i(x) + f^{\eta,\delta;\eps}_i(x)
\]
with $\eta>0$ in \eqref{eq:proof_prop_intersection3} chosen so that $r \eta + a_1 + \dots a_r < 2$. For $\eps',\delta >0$, we can bound
\begin{align}\label{eq:proof_prop_intersection7}
& \Expect{ \abs{ \int_A \prod_{i=1}^r f^\eps_i(x) dx - \int_A f^\eps_r(x) \bigcap_{i=1}^{r-1} \Mc_{x_i,z_i}^{D_i,a_i}(dx) } } \\
& \leq \Expect{ \abs{ \int_A f^\eps_r(x) \left( \prod_{i=1}^{r-1} f^\eps_i(x) - \prod_{i=1}^{r-1} f^{\eps'}_i(x) \right) dx}} \nonumber\\
& + \Expect{ \abs{ \int_A f^\eps_r(x) \prod_{i=1}^{r-1} f^{\eps'}_i(x) dx - \int_A f^\eps_r(x) \bigcap_{i=1}^{r-1} \Mc_{x_i,z_i}^{D_i,a_i}(dx) } }.\nonumber
\end{align}
By the case $r-1$, the second right hand side term tends to zero as $\eps' \to 0$. By writing for $i,k=1 \dots r-1$, $\eps_i^k = \eps'$ if $i \leq k-1$ and $\eps_i^k = \eps$ otherwise, we can write
\[
\prod_{i=1}^{r-1} f^\eps_i(x) - \prod_{i=1}^{r-1} f^{\eps'}_i(x) = \sum_{k=1}^{r-1} (f^{\eps}_k(x) - f^{\eps'}_k(x)) \prod_{\substack{1 \leq i \leq r-1\\i \neq k}} f^{\eps_i^k}_i(x).
\]
By triangle inequality, to bound the first right hand side term of \eqref{eq:proof_prop_intersection7}, it is thus enough to bound
\[
\Expect{ \abs{ \int_A (f_1^\eps(x) - f_1^{\eps'}(x)) \prod_{i=2}^r f_i^\eps(x) dx } }
\]
and $r-2$ other very similar terms. This is at most
\begin{align*}
& \Expect{ \abs{ \int_A (f_1^{\eta,\delta;\eps}(x) - f_1^{\eta,\delta;\eps'}(x)) \prod_{i=2}^r f_i^{\eta,\delta;\eps}(x) dx } } \\
& + \Expect{ \abs{ \int_A (f_1^\eps(x) - f_1^{\eps'}(x)) \prod_{i=2}^r f_i^\eps(x) dx - \int_A (f_1^{\eta,\delta;\eps}(x) - f_1^{\eta,\delta;\eps'}(x)) \prod_{i=2}^r f_i^{\eta,\delta;\eps}(x) dx } }.
\end{align*}
By independence and because for all $i=1 \dots r$, $x \in \C$, $\lim_{\delta \to 0} \limsup_{\eps \to 0} \Expect{\rho_i^{\eta,\delta;\eps}(x)} = 0$, dominated convergence theorem (the domination is provided by \eqref{eq:proof_th_intersection1}) shows that the $\limsup_{\eps,\eps' \to 0}$ of the second right hand side term goes to zero as $\delta \to 0$. By Cauchy-Schwarz and \eqref{eq:proof_prop_intersection3}, the first term is at most
\[
C_{\eta,\delta} \int_{A \times A} \frac{1}{|x-y|^{a_2 + \dots + a_r+ (r-1)\eta}} \Expect{ (f_1^{\eta,\delta;\eps}(x) - f_1^{\eta,\delta;\eps'}(x)) (f_1^{\eta,\delta;\eps}(y) - f_1^{\eta,\delta;\eps'}(y)) } dx dy
\]
which tends to zero as $\eps, \eps' \to 0$ by \eqref{eq:proof_prop_intersection3}, \eqref{eq:proof_prop_intersection4} and dominated convergence theorem (note that we obtain an integrable domination because $r \eta + a_1 + \dots + a_r < 2$). This concludes the induction proof of \textit{(i)}, \textit{(ii)} and \textit{(iv)}.

Finally, by induction on $r$, the measurability statement \textit{(iii)} follows from \textit{(ii)}. To conclude the proof, it remains to check \textit{(vi)}. The measurability of the process is clear at the level of approximation, i.e. for all $\eps >0$,
\[
(a_i)_{i = 1 \dots r} \in \{ (\alpha_i)_{i =1 \dots r} \in (0,2)^r: \sum \alpha_i < 2 \} \mapsto \bigcap_{i=1}^r \Mc_{x_i,z_i,\eps}^{D_i,a_i}
\]
is a measurable process (with appropriate topology). The claim follows from \textit{(i)} since a pointwise limit of measurable functions is measurable.
\end{proof}

\begin{remark}\label{rem:intersection}
We now address a remark which will be useful for the proof of Proposition \ref{prop:intersection_multipoint}. The proof of Proposition \ref{prop:intersection} actually shows that for all Borel set $A \subset \C$,
$
\bigcap_{i=1}^r \Mc_{x_i,z_i;\eps}^{D_i,a_i}(A)
$
converges in $L^1$ as $\eps \to 0$ towards what we denoted $\bigcap_{i=1}^r \Mc_{x_i,z_i}^{D_i,a_i}(A)$. \cite{jegoGMC} considered not only the spatial configuration of the thick points but also the deviation of the local times by looking at the measure: for all $A \in \Bc(\C)$ and $T \in \Bc(\R \cup \{+\infty\})$,
\[
\bar{\Mc}_{x_1,z_1;\eps}^{D_1,a_1}(A \times T) := \abs{\log \eps} \eps^{-a_1} \int_A \indic{ \sqrt{\frac{1}{\eps} L_{x,\eps}^{(1)}} - \sqrt{2a_1} \abs{\log \eps} \in T} dx.
\]
\cite[Proposition 6.1]{jegoGMC} shows that for all Borel sets $A \subset \C$ and $T \subset \R$ with $\inf T>-\infty$, $\bar{\Mc}_{x_1,z_1;\eps}^{D_1,a_1}(A \times T)$ converges in $L^1$ as $\eps \to 0$ towards 
\[
\Mc_{x_1,z_1;\eps}^{D_1,a_1}(A) \int_T \frac{1}{\sqrt{2a_1}} e^{-\sqrt{2a_1} t} dt.
\]
Using this result, a straightforward extension of Proposition \ref{prop:intersection} shows similarly that for all Borel sets $A \subset \C$ and $T_i \subset \R$ with $\inf T_i > - \infty$, the following convergence in $L^1$ holds
\[
\abs{\log \eps}^r \eps^{-a} \int_A \prod_{i=1}^r \indic{ \sqrt{\frac{1}{\eps} L^{(i)}_{x,\eps}} - \sqrt{2 a_i} \abs{\log \eps} \in T_i} dx
\xrightarrow[\eps \to 0]{} \bigcap_{i=1}^r \Mc_{x_i,z_i}^{D_i,a_i}(A) \prod_{i=1}^r \int_{T_i} \frac{1}{\sqrt{2a_i}} e^{-\sqrt{2a_i}t_i} dt_i.
\]
\end{remark}

We are now ready to prove Propositions \ref{prop:def_measures} and \ref{prop:intersection_multipoint}.

\begin{proof}[Proof of Propositions \ref{prop:def_measures} and \ref{prop:intersection_multipoint}]
To ease the notations, we will restrict ourselves to the case $r=2$. The general case $r \geq 2$ follows along the same lines. Let $A \subset \C$ be a Borel set.
Let $\eta >0$. We have
\begin{align*}
& \abs{\log \eps} \eps^a \int_A dx \indic{L_{x,\eps}^{(1)} + L_{x,\eps}^{(2)} \geq 2a \eps \abs{\log \eps}^2, L_{x,\eps}^{(1)}>0, L_{x,\eps}^{(2)}>0 } \\
& \leq \sum_{\alpha \in \frac{\eta}{\abs{\log \eps}} \N} \abs{\log \eps} \eps^a \int_A dx \indic{ \frac{1}{\eps} L_{x,\eps}^{(1)} \in 2 \left(\alpha, \alpha + \frac{\eta}{\abs{\log \eps}} \right] \abs{\log \eps}^2 , \frac{1}{\eps} L_{x,\eps}^{(2)} >2 (a-\alpha) \abs{\log \eps}^2 - g\eta \abs{\log \eps} } \\
& = \frac{1}{\eta} \int_0^\infty d \alpha \abs{\log \eps}^2 \eps^a \int_A dx \indic{ \frac{1}{\eps} L_{x,\eps}^{(1)} \in 2 \left(\alpha_\eps, \alpha_\eps + \frac{\eta}{\abs{\log \eps}} \right] \abs{\log \eps}^2 , \frac{1}{\eps} L_{x,\eps}^{(2)} >2 (a-\alpha_\eps) \abs{\log \eps}^2 - g\eta \abs{\log \eps} }
\end{align*}
where $\alpha_\eps = \frac{\eta}{\abs{\log \eps}} \floor{ \frac{\abs{\log \eps}}{\eta} \alpha}$. Let $K$ be a large integer. We now have
\begin{align*}
& \abs{\log \eps} \eps^a \int_A dx \indic{L_{x,\eps}^{(1)} + L_{x,\eps}^{(2)} \geq 2a \eps \abs{\log \eps}^2, L_{x,\eps}^{(1)}>0, L_{x,\eps}^{(2)}>0 } \\
& \leq \frac{1}{\eta} \int_0^\infty d\alpha \sum_{k=0}^{K-1} \indic{\alpha \in \alpha_\eps + \frac{\eta}{\abs{\log \eps}} \left[ \frac{k}{K}, \frac{k+1}{K} \right)} \\
& \times \abs{\log \eps}^2 \eps^a \int_A dx \indic{ \frac{1}{\eps} L_{x,\eps}^{(1)} \in 2 \left(\alpha_\eps, \alpha_\eps + \frac{\eta}{\abs{\log \eps}} \right] \abs{\log \eps}^2 , \frac{1}{\eps} L_{x,\eps}^{(2)} >2 (a-\alpha_\eps) \abs{\log \eps}^2 - 2\eta \abs{\log \eps} } \\
& \leq \frac{1}{\eta} \int_0^\infty d\alpha \sum_{k=0}^{K-1} \indic{\alpha \in \alpha_\eps + \frac{\eta}{\abs{\log \eps}} \left[ \frac{k}{K}, \frac{k+1}{K} \right)} \abs{\log \eps}^2 \eps^a \\
& \times \int_A dx \indic{ \frac{1}{\eps} L_{x,\eps}^{(1)} \in 2 \left(\alpha - \frac{\eta}{|\log \eps|} \frac{k+1}{K}, \alpha + \frac{\eta}{|\log \eps|} \left( 1 - \frac{k}{K} \right) \right] |\log \eps|^2, \frac{1}{\eps} L_{x,\eps}^{(2)} >2 (a-\alpha - \frac{\eta}{|\log \eps|} \frac{k}{K}) |\log \eps|^2 - 2\eta |\log \eps| }.
\end{align*}
For each $\alpha \in (0,a)$ and $k \in \{0,\dots, K-1\}$, $\abs{\log \eps}^2 \eps^a$ times
\[
\int_A dx \indic{ \frac{1}{\eps} L_{x,\eps}^{(1)} \in 2 \left(\alpha - \frac{\eta}{|\log \eps|} \frac{k+1}{K}, \alpha + \frac{\eta}{|\log \eps|} \left( 1 - \frac{k}{K} \right) \right] |\log \eps|^2, \frac{1}{\eps} L_{x,\eps}^{(2)} >2 (a-\alpha - \frac{\eta}{|\log \eps|} \frac{k}{K}) |\log \eps|^2 - 2\eta |\log \eps| }
\]
converges in $L^1$ towards (see Remark \ref{rem:intersection})
\begin{align*}
& \Mc_{x_0,z_0}^{D_0,\alpha} \cap \Mc_{x_1,z_1}^{D_1,a-\alpha}(A) e^{\eta(1+k/K)} \int_{-\eta(k+1)/(K \sqrt{2\alpha})}^{\eta(1-k/(K\sqrt{2\alpha}))} e^{-\sqrt{2\alpha} t} dt \\
& = \eta (1+o(1)) \Mc_{x_0,z_0}^{D_0,\alpha} \cap \Mc_{x_1,z_1}^{D_1,a-\alpha}(A)
\end{align*}
where $o(1)$ is independent of $\alpha$ and $k$ and goes to zero as $\eta \to 0$ and then $K \to \infty$.
Hence for all $\alpha \in (0,a)$,
\begin{align*}
& \frac{1}{\eta} \sum_{k=0}^{K-1} \indic{\alpha \in \alpha_\eps + \frac{\eta}{\abs{\log \eps}} \left[ \frac{k}{K}, \frac{k+1}{K} \right)} \abs{\log \eps}^2 \eps^a \\
& \times \int_A dx \indic{ \frac{1}{\eps} L_{x,\eps}^{(1)} \in 2\left(\alpha - \frac{\eta}{|\log \eps|} \frac{k+1}{K}, \alpha + \frac{\eta}{|\log \eps|} \left( 1 - \frac{k}{K} \right) \right] |\log \eps|^2, \frac{1}{\eps} L_{x,\eps}^{(2)} >2 (a-\alpha - \frac{\eta}{|\log \eps|} \frac{k}{K}) |\log \eps|^2 - 2\eta |\log \eps| }
\end{align*}
converges in $L^1$ towards
\[
(1+o(1)) \Mc_{x_0,z_0}^{D_0,\alpha} \cap \Mc_{x_1,z_1}^{D_1,a-\alpha}(A).
\]
If $\alpha > a$, the above term converges in $L^1$ to zero.
We are going to conclude with the following elementary reasoning. If $\alpha \in (0,\infty) \mapsto X_\eps^\alpha, \eps >0,$ are random processes almost surely measurable and defined on the same probability space satisfying: for all $\alpha >0$, $(X_\eps^\alpha, \eps >0)$ converges in $L^1$ and $\int_0^\infty \sup_\eps \Expect{\abs{X_\eps^\alpha}} d \alpha < \infty$; then $(\int_0^\infty X_\eps^\alpha d\alpha, \eps >0),$ converges in $L^1$. Indeed
\[
\limsup_{\eps, \eps' \to 0} \Expect{ \abs{ \int_0^\infty X_\eps^\alpha d\alpha - \int_0^\infty X_{\eps'}^\alpha d\alpha } } \leq \limsup_{\eps, \eps' \to 0} \int_0^\alpha \Expect{ \abs{X_\eps^\alpha - X_{\eps'}^\alpha }} d\alpha
\]
which vanishes by dominated convergence theorem. We apply this to our specific case for which we have already proven the desired pointwise convergence and for which the domination follows from \eqref{eq:proof_th_intersection1}. It implies that
\begin{align*}
& \int_0^\infty d\alpha \frac{1}{\eta} \sum_{k=0}^{K-1} \indic{\alpha \in \alpha_\eps + \frac{\eta}{\abs{\log \eps}} \left[ \frac{k}{K}, \frac{k+1}{K} \right)} \abs{\log \eps}^2 \eps^a \\
& \times \int_A dx \indic{ \frac{1}{\eps} L_{x,\eps}^{(1)} \in 2\left(\alpha - \frac{\eta}{|\log \eps|} \frac{k+1}{K}, \alpha + \frac{\eta}{|\log \eps|} \left( 1 - \frac{k}{K} \right) \right] |\log \eps|^2, \frac{1}{\eps} L_{x,\eps}^{(2)} >2 (a-\alpha - \frac{\eta}{|\log \eps|} \frac{k}{K}) |\log \eps|^2 - 2\eta |\log \eps| }
\end{align*}
converges in $L^1$ towards
\[
(1+o(1)) \int_0^a \Mc_{x_0,z_0}^{D_0,\alpha} \cap \Mc_{x_1,z_1}^{D_1,a-\alpha}(A) d\alpha.
\]
By letting $\eta \to 0$ and then $K \to \infty$, we obtain the desired upper bound:
\[
\abs{\log \eps} \eps^a \int_A dx \indic{L_{x,\eps}^{(1)} + L_{x,\eps}^{(2)} \geq 2a \eps \abs{\log \eps}^2, L_{x,\eps}^{(1)}>0, L_{x,\eps}^{(2)}>0 }
\]
is smaller or equal than a sequence which converges in $L^1$ towards 
\[
\int_0^a \Mc_{x_0,z_0}^{D_0,\alpha} \cap \Mc_{x_1,z_1}^{D_1,a-\alpha}(A) d\alpha.
\]
The lower bound is obtained along the same lines and we have shown that for all Borel set $A \subset \C$, the following convergence in $L^1$ holds:
\[
\lim_{\eps \to 0}
\abs{\log \eps} \eps^a \int_A dx \indic{L_{x,\eps}^{(1)} + L_{x,\eps}^{(2)} \geq 2a \eps \abs{\log \eps}^2, L_{x,\eps}^{(1)}>0, L_{x,\eps}^{(2)}>0 }
=
\int_0^a \Mc_{x_0,z_0}^{D_0,\alpha} \cap \Mc_{x_1,z_1}^{D_1,a-\alpha}(A) d\alpha.
\]
This concludes the proof of Propositions \ref{prop:def_measures} and \ref{prop:intersection_multipoint}.
\end{proof}

We finish with a proof of Proposition \ref{prop:other_charac}.

\begin{proof}[Proof of Proposition \ref{prop:other_charac}]
Recall that $B_{x_i,z_i}^{D_i}$ is a trajectory distributed according to $\P_{x_i,z_i}^{D_i}$ and that $L_{x,\eps}^{(i)}$ denotes the associated local times of circles $\partial D(x,\eps)$. 
Let $x \in D$. The key point of the proof is that the law of $B_{x_i,z_i}^{D_i}$ conditioned on $\{ \tfrac1{\eps} L_{x,\eps}^{(i)} \geq 2 a_i | \log \eps|^2 \}$ converges weakly to the law of $B_{x_i,x}^{D_i} \wedge \Xi_x^{D_i,a_i} \wedge B_{x,z_i}$ as $\eps \to 0$. This fact was already proven in \cite{bass1994} (see Proposition 5.1 therein). From this and the convergence of $\bigcap_{i=1}^r \Mc_{x_i,z_i;\eps}^{D_i,a_i}$ to $\bigcap_{i=1}^r \Mc_{x_i,z_i}^{D_i,a_i}$ (Proposition \ref{prop:intersection}), one easily obtains Proposition \ref{prop:other_charac}. See the proof of \cite[Proposition 6.2]{jegoGMC} for more details in the case of one single trajectory (no new input is required in the case of several trajectories).
\end{proof}

\paragraph*{Acknowledgement}

I am grateful to Nathanaël Berestycki for many inspiring discussions, to Hugo Falconet and Lucas Teyssier for comments on a first version of the article and to anonymous referees for their careful reading.

\paragraph*{Declarations -- Funding}

During the process of writing the current article, the author was recipient of a DOC Fellowship of the Austrian Academy of Sciences at the Faculty of Mathematics of the University of Vienna and was partly supported by the EPSRC grant EP/L016516/1 for the University of Cambridge Centre for Doctoral Training, the Cambridge Centre for Analysis.
The author has no relevant financial or non-financial interests to disclose.

\bibliographystyle{alpha}
\bibliography{../../bibliography}

\end{document}